\title{Non-naturally reductive Einstein metrics on normal homogeneous Einstein manifolds}
\author{Zaili Yan$^{1}$ \and Shaoqiang Deng$^2$}
\address[Zaili Yan ]{Department of Mathematics, Ningbo University, Ningbo, Zhejiang Province, 315211,  People's Republic of China}
\email[]{yanzaili@nbu.edu.cn}
\address[Shaoqiang Deng]{School of Mathematical Sciences and LPMC,
Nankai University, Tianjin 300071, People's Republic of China}
\email[]{dengsq@nankai.edu.cn}
\thanks{$^1$Z. Yan is supported by NSFC (no. 11626134, 11401425) and  K.C. Wong Magna Fund in Ningbo University.}
\thanks{$^2$S. Deng is supported  by NSFC (no.  11671212, 51535008)  of China.}
\date{}
\newtheorem{thm}{Theorem}[section]
\newtheorem{prop}[thm]{Proposition}
\newtheorem{lem}[thm]{Lemma}
\theoremstyle{definition}
\newtheorem{defn}[thm]{Definition}
\newtheorem{rem}[thm]{Remark}
\begin{document}

\maketitle

\begin{abstract}
It is an important problem in differential geometry to find non-naturally reductive homogeneous Einstein metrics on homogeneous manifolds. In this paper, we consider this problem for   some coset spaces of compact simple Lie groups. A new  method to construct  invariant  non-naturally reductive Einstein metrics on normal homogeneous Einstein manifolds is presented.  In particular, we show that on the standard homogeneous Einstein manifolds, except for some special cases, there exist plenty of such metrics.   A further interesting result of this paper is that  on some  compact semisimple Lie groups, there exist a large  number of left invariant non-naturally reductive Einstein metrics which are not  product metrics.

\medskip
\textbf{Mathematics Subject Classification (2010)}: 53C25,  53C35, 53C30.

\medskip
\textbf{Key words}:  Einstein metrics,  Riemannian submersion,  naturally reductive metrics, standard homogeneous Einstein manifolds

\end{abstract}

\section{Introduction}

The study of Einstein metrics has been one of the central problems in Riemannian geometry.
Recall that a connected  Riemannian manifold $(M,g)$ is called Einstein if there exists a constant $c$ such that  $\mathrm{Ric}(g)=cg$, where  $\mathrm{Ric}(g)$ is the Ricci tesnor of $(M,g)$.  In general, the related problems in this field are rather involved and difficult. For example, till now a sufficient and necessary condition for a manifold to admit an Einstein metric is still unknown. As another remarkable open problem, it has been a long standing problem whether there is a nonstandard Einstein metric on the $4$-sphere $S^4$, see for example \cite{yau}.
This problem particularly reveals the fact that finding new examples of Einstein metrics
is essential in this topic.

 Although in the homogeneous case many beautiful results have been established, a complete classification of homogeneous Einstein manifolds still seems to be unreachable.
Even if in the  compact case, the classification has only been achieved for  spheres, normal homogeneous spaces and  naturally reductive metrics; see \cite{dz79,wz85}.
See also \cite{bohm04,bk06,bwz04,wz86,wolf68} for some important and interesting results on the existence (or non-existence) of homogeneous or inhomogeneous Einstein metrics on some special manifolds. Meanwhile, in the literature there are some excellent surveys of the development of this field, see for example
   \cite{besse87book, nrs07,wang99}.

 The method of Riemannian submersion is an important tool to construct new examples of
 Einstein metrics, and it has been applied to obtain many interesting existence results; see
 Chapter 9 of \cite{besse87book} and some results in \cite{ara10,ara11,dk08}.
  Let $G/H$ be a compact connected homogeneous space, and $\mathfrak{g}=\mathfrak{h}+\mathfrak{m}$ a reductive decomposition of $\mathfrak{g}$, where $\mathfrak{g}$, $\mathfrak{h}$ denote the Lie algebras of $G$ and $H$ respectively, and $\mathfrak{m}$ is a subspace of $\mathfrak{g}$ such that
 $\mathrm{Ad}(H)(\mathfrak{m})\subset \mathfrak{m}$. Then there is a one-to-one correspondence
 between the $G$-invariant Riemannian metric on $G/H$ and the $\mathrm{Ad}(H)$-invariant inner product on $\mathfrak{m}$.
 Recall that an invariant metric on $G/H$ is called normal if the corresponding inner product on $\mathfrak{m}$ is the restriction of a bi-invariant inner product on $\mathfrak{g}$.   In particular, let $B$ denote the negative Killing form of $\mathfrak{g}$,
  and $g_{B}$  be the standard metric   on $G/H$  induced by $B|_{\mathfrak{m}}$. Then $g_{B}$ is normal. The coset space  $G/H$ is called a standard
 homogeneous Einstein manifold if the standard metric $g_{B}$ is Einstein. In \cite{wz85}, M. Wang and W. Ziller obtained a classification of  standard homogeneous Einstein manifolds $G/H$ with $G$ compact simple.
 Let $K/H\rightarrow G/H\rightarrow G/K$   be a Riemannian submersion with totally geodesic fibres.
 Assume that the standard metrics on   $G/H$ and  $G/K$ are Einstein, and there exists a  constant $c$ such that $B_{\bar{\mathfrak{k}}}=cB|_{\bar{\mathfrak{k}}}$,
 where $\bar{K}/\bar{H}$ is the corresponding (almost) effective quotient of $K/H$,
 and $B_{\bar{\mathfrak{k}}}$ is the negative Killing form of
 $\bar{\mathfrak{k}}=\mathrm{Lie}(\bar{K})$. Then besides the standard homogeneous Einstein metric, M. Wang and W. Ziller \cite{wz85} showed that there exists another (non-naturally reductive) homogeneous Einstein metric on $G/H$ except  some special cases;
 see Table XI of \cite{wz85} for a complete classification of the Riemannian submersions
 $K/H\rightarrow G/H\rightarrow G/K$.

 This paper is a continuation of our previous work \cite{YZ}.  Inspired by the ideas of Riemannian submersion of M. Wang and W. Ziller \cite{wz85,wz86}, we consider a family of invariant metrics on $G/H$  depending on two real parameters  associated to two Riemannian submersions
 $K/H\rightarrow G/H\rightarrow G/K$ and $L/H\rightarrow G/H\rightarrow G/L$. More precisely, given a basic quadruple
 $(G,L,K,H)$ (see Definition \ref{def3.1}), where the Lie algebra $\mathfrak{g}$ has a $B$-orthogonal decomposition
\begin{equation}\label{}
\mathfrak{g}=\mathfrak{l}+\mathfrak{p}
=\mathfrak{k}+\mathfrak{u}+\mathfrak{p}=\mathfrak{h}+\mathfrak{n}+\mathfrak{u}+\mathfrak{p},
\quad \mathfrak{m}=\mathfrak{n}+\mathfrak{u}+\mathfrak{p},
\end{equation}
where $\mathfrak{n}$, $\mathfrak{u}$, $\mathfrak{p}$ are the subspaces of $\mathfrak{k}$, $\mathfrak{l}$ and $\mathfrak{g}$ respectively, and $\mathfrak{k}=\mathrm{Lie(K)}$, $\mathfrak{l}=\mathrm{Lie(L)}$,
    we consider  $G$-invariant metrics  of the form
\begin{equation}\label{g(x,y)}
  \langle , \rangle=g_{(x,y)}=B|_{\mathfrak{n}}+xB|_{\mathfrak{u}}+yB|_{\mathfrak{p}}, \quad x,y\in \mathds{R}^{+},
\end{equation}
on the homogeneous space $G/H$. Our goal is to find out under what conditions there exist new Einstein metrics, and if so, to classify them.
It is clear that the invariant metric $g_{(1,y)}$ corresponds to the Riemannian submersion
  $L/H\rightarrow G/H\rightarrow G/L$, and the invariant metric $g_{(x,x)}$ corresponds to the Riemannian submersion
$K/H\rightarrow G/H\rightarrow G/K$.

Our first main theorem  is the following
\begin{thm}\label{main1}
Let $(G,L,K,H)$ be a  basic quadruple with $G$  compact simple. Suppose   the standard metrics on $G/L$, $G/K$, $G/H$ are Einstein. If  $H\neq \{e\}$, then $(G,L,K,H)$ must be one of the quadruples in Table A; If  $H=\{e\}$, then $(G,L,K,H)$ must be one of the quadruples in Table B.
\end{thm}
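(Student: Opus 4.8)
The approach is to reduce the classification to that of standard homogeneous Einstein manifolds of compact simple Lie groups due to Wang and Ziller \cite{wz85}, followed by a finite matching argument. Unwinding Definition \ref{def3.1}, the $B$-orthogonal decomposition $\mathfrak{g}=\mathfrak{h}+\mathfrak{n}+\mathfrak{u}+\mathfrak{p}$ records a flag of closed connected subgroups $H\subset K\subset L\subset G$, with $\mathfrak{k}=\mathfrak{h}+\mathfrak{n}$ and $\mathfrak{l}=\mathfrak{k}+\mathfrak{u}$; for a genuine basic quadruple all these inclusions are proper, since if one of $\mathfrak{n}$, $\mathfrak{u}$, $\mathfrak{p}$ vanishes the two-parameter family $g_{(x,y)}$ reduces to a single-submersion situation of the kind already treated in \cite{wz85}. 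Having the standard metrics on $G/H$, $G/K$, $G/L$ all Einstein then means, roughly, that both $K/H\to G/H\to G/K$ and $L/H\to G/H\to G/L$ are Riemannian submersions of the type classified in Table XI of \cite{wz85}, sharing the total space $G/H$ and nested via $K\subset L$. So the theorem becomes: among all standard Einstein $G/H$ with $G$ compact simple, list those admitting a nested pair of intermediate subgroups $K\subset L$ for which $G/K$ and $G/L$ are standard Einstein and the remaining axioms of Definition \ref{def3.1} hold.

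I would run the search from the top of the flag downward. Fix $G$ among the compact simple Lie groups; by \cite{wz85} the proper connected subgroups $L$ with $G/L$ standard Einstein form an explicit finite list up to conjugacy. For each such $L$, determine the connected subgroups $K\subsetneq L$ with $G/K$ standard Einstein — note it is $G/K$, not $L/K$, that must be Einstein, so $(G,K)$ is again constrained to the Wang--Ziller list, now intersected with the requirement that $K$ embed in the chosen $L$; Dynkin's classification of maximal connected subgroups of compact simple Lie groups is the main tool both for producing the finitely many candidates and for discarding the rest. Finally, for each surviving pair $(K,L)$ let $H\subseteq K$ range over connected subgroups with $G/H$ standard Einstein: if $H=\{e\}$ the condition on $G/H$ is automatic, since $(G,-B)$ is Einstein for $G$ simple, and these flags produce the quadruples of Table B; if $H\neq\{e\}$, then $(G,H)$ is once more forced into the Wang--Ziller list, producing the quadruples of Table A.

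For every flag that survives this enumeration I would check the remaining content of the notion of a basic quadruple: that the fibrations $K/H\to G/H\to G/K$ and $L/H\to G/H\to G/L$ have totally geodesic fibres, that the relevant restricted Killing forms are proportional on the almost-effective subalgebras (for instance $B_{\bar{\mathfrak{k}}}=c\,B|_{\bar{\mathfrak{k}}}$ and the analogue for $\bar{\mathfrak{l}}$), and that $\mathfrak{n},\mathfrak{u},\mathfrak{p}$ are all nonzero. Passing to the effective quotients $\bar K/\bar H$, $\bar L/\bar H$, $\bar L/\bar K$ is needed precisely when a fibre fails to be effective as a space of its group, e.g.\ when $H$ contains a normal subgroup of $K$; since $G$ is simple, the $G$-actions on $G/H$, $G/K$, $G/L$ are themselves automatically almost effective, so no reduction is needed on that side. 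Flags violating any of these requirements are discarded, and the ones that remain constitute Tables A and B.

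The main obstacle is the scale and delicacy of this finite case analysis rather than any single deep step. The Wang--Ziller tables are long, and for each base pair $(G,L)$ one must enumerate the intermediate $K$ and $H$ exhaustively and without overcounting conjugate flags, which forces careful use of the classification of maximal subgroups, of centralizers and normalizers, and of Dynkin indices. The proportionality conditions on the restricted Killing forms are the subtle point: they can fail in non-obvious ways when $\bar K$ (or $\bar L$) has several simple factors whose Dynkin indices in $G$ differ, so each candidate flag must be inspected individually rather than ruled in or out by a uniform criterion. Keeping the almost-effective reductions consistent across the whole enumeration is where most of the care is required.
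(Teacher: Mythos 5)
Your proposal is correct and follows essentially the same route as the paper: reduce everything to the Wang--Ziller classification of standard homogeneous Einstein manifolds and of the fibrations in their Table XI, impose the Killing-form proportionality via Proposition \ref{c1} and Theorem \ref{c2}, and finish by a finite matching of nested subgroups. The only difference is organizational --- you traverse the flag top-down starting from $L$, while the paper anchors on the fibration $K/H\rightarrow G/H\rightarrow G/K$ already listed in Table XI and then searches upward for $L\supset K$ using Table IA --- which does not change the substance of the argument.
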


 Next we study the Ricci curvature of   $g_{(x,y)}$, and obtain a sufficient and necessary condition for $g_{(x,y)}$ to be Einstein; see Proposition \ref{prop4.4}. Then we prove
\begin{thm}\label{main2}
Let $(G,L,K,H)$ be one of the basic quadruples in Table A and Table B. Then besides the three homogeneous Einstein metrics associated to the Riemannian submersions $K/H\rightarrow G/H\rightarrow G/K$
and  $L/H\rightarrow G/H\rightarrow G/L$, there  always exists another Einstein metric on $G/H$
of the form $g_{(x,y)}$ with $x\neq 1, x\neq y$, except for the following three cases:
\begin{enumerate}
\item  Type A. 4 with $n_{1}=9m+1$, $n_{2}=n_{3}=2$, $k=2m$, $m\in \mathds{N}^{+}$, namely, the quadruples
\begin{equation}
  \bigg(\mathfrak{sp}(8m(9m+1)), (9m+1)\mathfrak{sp}(8m), 2(9m+1)\mathfrak{sp}(4m), 4(9m+1)\mathfrak{sp}(2m)\bigg).
\end{equation}
\item  Type A. 5:
 \begin{equation}
  \bigg(\mathfrak{e}_{6}, \mathfrak{so}(10)\oplus \mathds{R},
   \mathfrak{so}(8)\oplus \mathds{R}^{2}, \mathds{R}^{6}\bigg).
\end{equation}
\item Type B. 3 with  $n_{1}=n_{2}=2$, $k=1$, namely, the quadruple
\begin{equation}
  \bigg(\mathfrak{sp}(4), 2\mathfrak{sp}(2), 4\mathfrak{sp}(1), \{e\}\bigg).
\end{equation}
\end{enumerate}
 \end{thm}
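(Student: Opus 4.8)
The plan is to convert the Einstein condition for the two‑parameter family $g_{(x,y)}$ into an explicit polynomial system and then, quadruple by quadruple, to exhibit a positive solution lying off the two lines $x=1$ and $x=y$. By Proposition~\ref{prop4.4}, $g_{(x,y)}$ is Einstein if and only if the Ricci eigenvalues $r_{1},r_{2},r_{3}$ along $\mathfrak n,\mathfrak u,\mathfrak p$ coincide; clearing denominators, this amounts to a system of two polynomial equations $P(x,y)=0$, $Q(x,y)=0$ whose coefficients are explicit rational expressions in the dimensions of $\mathfrak n,\mathfrak u,\mathfrak p$ and in the structure constants $[ijk]$ of the decomposition $\mathfrak m=\mathfrak n+\mathfrak u+\mathfrak p$. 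For every entry of Table~A and Table~B these data are known explicitly — for the entries depending on the integer parameters $n_{1},n_{2},n_{3},k,m$ they are rational functions of those parameters — so $P$ and $Q$ are completely determined in each case.

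First I would record the three Einstein metrics already present. The standard metric is $g_{(1,1)}$; the metrics $g_{(x,x)}$ form the canonical variation of $K/H\to G/H\to G/K$, and since the standard metrics on $G/H$ and $G/K$ are Einstein and $(G,L,K,H)$ is a basic quadruple, this family contains a second Einstein metric $g_{(x_{0},x_{0})}$ with $x_{0}\neq 1$; likewise the metrics $g_{(1,y)}$ form the canonical variation of $L/H\to G/H\to G/L$ and contain a second Einstein metric $g_{(1,y_{0})}$ with $y_{0}\neq 1$. Any solution $(x,y)$ of the Einstein system with $x\neq 1$ and $x\neq y$ is automatically distinct from all three of $(1,1)$, $(x_{0},x_{0})$, $(1,y_{0})$, so it suffices to produce one such solution.

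Next I would eliminate the solutions already accounted for. Restricting $P$ and $Q$ to the line $x=y$ yields a one‑variable polynomial all of whose positive roots (among them $1$ and $x_{0}$) correspond to Einstein metrics of the excluded form $g_{(x,x)}$, and restricting to $x=1$ accounts for $1$ and $y_{0}$; equivalently, forming the resultant $R(y)=\operatorname{Res}_{x}(P,Q)$ (or a Gr\"obner basis for $(P,Q)$) and dividing out the factors coming from $y=1$, $y=y_{0}$ and from solutions on $x=y$ leaves a \emph{reduced} polynomial of controlled degree. Producing the fourth metric then reduces to showing this reduced polynomial has a root $y^{\ast}>0$ whose companion value $x^{\ast}$ is positive and satisfies $x^{\ast}\neq 1$, $x^{\ast}\neq y^{\ast}$. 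I would obtain this from a sign analysis: evaluating the reduced polynomial (and the relevant branch $x=x(y)$) at $y\to 0^{+}$, $y\to+\infty$, and at the values $1$ and $y_{0}$, and applying the intermediate value theorem; for the finitely many rigid quadruples this is a direct check, while for the parametrized families the signs of these boundary values are monotone, or have a controllable pattern, in $n_{1},n_{2},n_{3},k,m$, which gives the conclusion uniformly.

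The heart of the argument, and the step I expect to be the main obstacle, is exactly this uniform analysis of the parametrized families, because it is there that the exceptions surface. In Type~A.4 with $n_{1}=9m+1,\ n_{2}=n_{3}=2,\ k=2m$, in Type~A.5, and in Type~B.3 with $n_{1}=n_{2}=2,\ k=1$, the reduced polynomial degenerates: its candidate positive root either merges with $x_{0}$ or $y_{0}$ — so the corresponding point is a multiple solution of $(P,Q)$ and carries no new metric — or it leaves the positive quadrant, or it falls on one of the forbidden lines $x=1$, $x=y$. Verifying that these are the only degenerations requires computing the discriminant of the reduced polynomial as a function of the parameters and checking that its only admissible zeros are the stated ones; this is the most delicate computation, and I would carry it out case by case following the subdivision of Tables~A and~B, the general position of the four solutions in all other cases being guaranteed by the sign analysis of the previous paragraph.
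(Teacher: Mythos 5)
Your overall strategy coincides with the paper's: use Proposition \ref{prop4.4} to turn the Einstein condition into a polynomial problem, strip off the solutions lying on $x=1$ and $x=y$, and then locate a further positive root by a sign/intermediate-value argument carried out case by case over Tables A and B. But as written the proposal has a genuine gap: everything that actually decides the theorem --- in particular, which quadruples are exceptional --- is deferred to an unexecuted ``discriminant of the reduced polynomial as a function of the parameters'' computation, and the claim that the relevant boundary signs ``are monotone, or have a controllable pattern'' in $n_{1},n_{2},n_{3},k,m$ is precisely the assertion that has to be proved. The system coming from \eqref{Einstein equ} is of degree six, so a resultant in $x$ followed by a parametric discriminant analysis in up to four integer parameters is not obviously tractable, and nothing in the proposal explains why it should be. The paper avoids this entirely by two concrete devices you do not supply: (i) since $y$ is already determined by $x$ through \eqref{x=y}, no elimination is needed, and the explicit factorization $\Delta(x)-\frac{1-c_{1}}{4}(x-1)=M(x-\alpha)(x-\beta)$ collapses the degree-six equation to a single cubic $\bar f(x)$ with $\bar f(0)<0$ and $\bar f(+\infty)=+\infty$; (ii) the position of its positive root $z$ relative to the forbidden values $1$ and $\beta$ is controlled by the signs of just two scalars, $\omega_{1}=\frac14+\frac12 l_{\mathfrak p}-k_{\mathfrak p}-\frac{c_{1}}{2}$ and $\omega_{2}=2k_{\mathfrak p}+c_{1}-2c_{2}-4h_{\mathfrak p}$, which Proposition \ref{prop4.5} shows are nonnegative in all but six explicitly listed families. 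That reduction is what makes the ``uniform analysis of the parametrized families'' you identify as the main obstacle actually feasible; without it, your plan is a statement of intent rather than a proof.

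A second, related gap concerns the three exceptional quadruples. To justify the word ``except'' in the statement you must prove \emph{non-existence} of a metric with $x\neq 1$, $x\neq y$ there, not merely that your existence argument degenerates. The paper does this by computing $\bar f(1)$ and $\bar f(\beta)$ exactly in the six borderline families (Appendix B) and showing that the unique positive root of $\bar f$ equals $1$ precisely for Type A.4 with $n_{1}=9m+1$, $n_{2}=n_{3}=2$, $k=2m$, and equals $\beta$ precisely for Type A.5 and for Type B.3 with $n_{1}=n_{2}=2$, $k=1$; uniqueness of the positive root is what rules out any further solution. Your proposal gestures at ``the candidate positive root merges with $x_{0}$ or $y_{0}$ or leaves the positive quadrant,'' but gives no argument that these mergers happen exactly at the three stated parameter values and nowhere else, nor that no other positive root survives. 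Until those computations are carried out, the proposal establishes neither the existence claim for the generic quadruples nor the exceptionality of the three listed ones.
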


As an application of Theorems \ref{main1} and  \ref{main2}, we obtain some new invariant Einstein metrics on some flag manifolds $G/T$, where $G=\mathrm{SU}(n), \mathrm{SO(2n)}$, or $E_{8}$, and $T$ is a maximal compact connected abelian subgroup of $G$.   Moreover, Table B provides  many new invariant Einstein metrics on compact simple Lie groups which are not naturally reductive. Finally, we prove the following
\begin{thm}
Let $n=p_{1}^{l_{1}}p_{2}^{l_{2}}\cdots p_{s}^{l_{s}}$ be a positive integer, where the $p_i$'s are prime numbers and $p_{i}\neq p_{j}$, when $i\neq j$.  Let $H$ be a compact connected simple Lie group and $G=H\times H\times \cdots \times H$ ($n$ times).
 Then  $G$  admits at least $(l_{1}+1)(l_{2}+1)\cdots (l_{s}+1)-2$ left invariant non-equivalent  non-naturally reductive Einstein metrics.
\end{thm}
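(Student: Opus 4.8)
The plan is to realize $G=H\times\cdots\times H$ as the top group of a basic quadruple (Definition~\ref{def3.1}) once for every divisor $d$ of $n$ with $1<d<n$, and then to apply Proposition~\ref{prop4.4}; since the number of such divisors is exactly $(l_{1}+1)(l_{2}+1)\cdots (l_{s}+1)-2$, this furnishes the required number of metrics provided distinct divisors give non-equivalent ones. Fix such a $d$, write $n=de$, and split the $n$ factors of $G$ into $d$ consecutive blocks of $e$ factors. Let $K=\{(h,\dots ,h)\}\cong H$ be the full diagonal subgroup, and $L_{d}\cong H^{d}$ the block-diagonal subgroup of tuples that are constant on each block; then $\{e\}\subsetneq K\subsetneq L_{d}\subsetneq G$, with $K$ sitting in $L_{d}$ as its full diagonal. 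With $B$ the Killing form of $\mathfrak{g}=\mathfrak{h}\oplus\cdots\oplus\mathfrak{h}$ one obtains the $B$-orthogonal decomposition $\mathfrak{g}=\mathfrak{k}+\mathfrak{u}+\mathfrak{p}$, where $\mathfrak{u}$ and $\mathfrak{p}$ are the $B$-orthogonal complements of $\mathfrak{k}$ in $\mathfrak{l}_{d}=\mathrm{Lie}(L_{d})$ and of $\mathfrak{l}_{d}$ in $\mathfrak{g}$, and $\mathfrak{n}=\{0\}$ because the bottom of the quadruple is trivial; the metrics in play are the left-invariant metrics $g_{(x,y)}=B|_{\mathfrak{k}}+xB|_{\mathfrak{u}}+yB|_{\mathfrak{p}}$ on $G$.

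I would first check that $(G,L_{d},K,\{e\})$ is indeed a basic quadruple. The formal conditions of Definition~\ref{def3.1} are straightforward to verify; the substantive requirements are that the standard metrics of $G/\{e\}=G$, of $G/K=H^{n}/\Delta H$ and of $G/L_{d}=H^{n}/H^{d}$ be Einstein, and that the requisite Killing-form proportionalities hold — the latter with the constants read off from $B|_{\mathfrak{k}}=nB_{\mathfrak{k}}$ and $B|_{\mathfrak{l}_{d}}=eB_{\mathfrak{l}_{d}}$. The metric $g_{B}$ on $G=H^{n}$ is Einstein because $H$ is simple. For $G/K$ and $G/L_{d}$ I would invoke symmetry: the group of permutations of the $n$ factors contains $S_{n}$, which normalizes $\Delta_{n}H$ and hence acts by isometries on $(H^{n}/\Delta H,g_{B})$; the isotropy module $\mathfrak{k}^{\perp}$, viewed as an $S_{n}\times H$-module, is the real-irreducible standard representation of $S_{n}$ tensored with the adjoint module of $\mathfrak{h}$, so the only invariant symmetric bilinear form on it is a multiple of $B$, whence the invariant tensor $\mathrm{Ric}(g_{B})$ is proportional to $g_{B}$. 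Replacing $S_{n}$ by the wreath product $S_{e}\wr S_{d}$, which normalizes $L_{d}$, settles $H^{n}/H^{d}$ in the same way.

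The heart of the proof is the Einstein analysis of Proposition~\ref{prop4.4} applied to $(G,L_{d},K,\{e\})$. Here the structure constants are entirely explicit, because every module that occurs is a copy of the adjoint module of $\mathfrak{h}$, whose Casimir constant relative to $B$ is known; thus they depend only on $n$, $d$ and $\dim H$. The resulting Einstein system in $(x,y)$ is already solved by $(1,1)$, by $(x_{0},x_{0})$ coming from the submersion $\Delta H\to H^{n}\to H^{n}/\Delta H$, and by $(1,y_{0})$ coming from $H^{d}\to H^{n}\to H^{n}/H^{d}$, the latter two with parameters $\neq 1$ by the submersion construction of Wang--Ziller. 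Arguing as in the proof of Theorem~\ref{main2} (which uses only the structural data of a basic quadruple, not the simplicity of $G$), one extracts a further solution $(x_{d},y_{d})$ with $x_{d}>0$, $y_{d}>0$, $x_{d}\neq 1$ and $x_{d}\neq y_{d}$. I expect the main obstacle to be showing that none of the three degenerations responsible for the exceptions in Theorem~\ref{main2} occurs for the family $(H^{n},H^{d},H,\{e\})$: each of those exceptional quadruples has $G$ simple (and, when the bottom is trivial, $G=\mathrm{Sp}(4)$), so none of them has the present form with $n>1$, but ruling out the degeneration still requires a direct analysis of the explicit defining polynomial in terms of $n$, $d$ and $\dim H$, and that is where most of the computation lies.

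It remains to establish the three properties of $g_{(x_{d},y_{d})}$. It is not naturally reductive: on a basic quadruple a metric $g_{(x,y)}$ is naturally reductive only when $x\in\{1,y\}$ (by the paper's natural-reductivity criterion together with the d'Atri--Ziller classification of naturally reductive left-invariant metrics on compact Lie groups), and $x_{d}\notin\{1,y_{d}\}$. It is not a product metric, since $\mathfrak{u}$ contains vectors with nonzero components in two distinct factors $\mathfrak{h}_{i}$, so a metric with $x_{d}\neq y_{d}$ cannot be block-diagonal for the decomposition $\mathfrak{g}=\bigoplus_{i}\mathfrak{h}_{i}$. Finally, for distinct admissible divisors $d\neq d'$ the metrics $g_{(x_{d},y_{d})}$ and $g_{(x_{d'},y_{d'})}$ are non-equivalent: an isometry between left-invariant metrics on a compact semisimple Lie group is, up to a translation, an automorphism of $G$; automorphisms of $H^{n}$ preserve the Killing form $B$; hence such an isometry would have to match the $B$-eigenvalue data of the two metrics, in particular the multisets of eigenspace dimensions $\{\dim H,(d-1)\dim H,(n-d)\dim H\}$ and $\{\dim H,(d'-1)\dim H,(n-d')\dim H\}$; these are distinct because $d=d'$ is excluded and two divisors of $n$ lying strictly between $1$ and $n$ cannot satisfy $d+d'=n+1$ (an elementary divisibility argument). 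Taking one metric $g_{(x_{d},y_{d})}$ for each of the $(l_{1}+1)(l_{2}+1)\cdots (l_{s}+1)-2$ admissible divisors $d$ then gives the theorem.
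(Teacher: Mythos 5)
Your overall strategy is the same as the paper's: for each divisor $d$ of $n$ with $1<d<n$ you form the quadruple $(H^{n},H^{d},\Delta H,\{e\})$ (the paper writes $n=pq$, $L=pH$, $K=H$, so your $d$ is its $p$ and your $e$ is its $q$), invoke the machinery of Proposition \ref{prop4.4}/Theorem \ref{main} to produce a metric $g_{(x,y)}$ with $x\neq 1$, $x\neq y$, and count the admissible divisors. The supporting points you elaborate — that $g_{B}$ on $G/K$ and $G/L_{d}$ is Einstein (the paper cites Proposition 5.5 of \cite{wz85}; your irreducibility argument via $S_{n}$ and $S_{e}\wr S_{d}$ is a legitimate substitute), that the metrics are not products, and that distinct divisors give non-equivalent metrics (the paper delegates this to its ``obvious'' Proposition 4.2; your eigenspace-dimension argument, including the observation that $d+d'=n+1$ is impossible for two proper divisors, is a sound way to make it explicit) — are all fine, modulo the small slip that for $H=\{e\}$ one has $\mathfrak{n}=\mathfrak{k}$ rather than $\mathfrak{n}=\{0\}$, which does not affect the metric you write down.

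The genuine gap is that you never actually establish the existence of the fourth Einstein solution $(x_{d},y_{d})$: you write that you ``expect the main obstacle to be'' ruling out the degenerations and that this ``requires a direct analysis of the explicit defining polynomial,'' but you do not perform that analysis, and observing that the three exceptional quadruples of Theorem \ref{main2} are not of the form $(H^{n},H^{d},H,\{e\})$ does not by itself show that the degeneration they exemplify cannot occur here. This is precisely the step the whole theorem rests on, and it cannot be left as an expectation. It is also much easier than you anticipate: the paper's proof of Theorem \ref{main} isolates the criterion \eqref{equ4.48} (explicitly noted to remain valid for semisimple $G$), namely that the unique positive root $z$ of $\bar{f}$ satisfies $z<\beta$ when $\omega_{1}=\frac{1}{4}+\frac{1}{2}l_{\mathfrak{p}}-k_{\mathfrak{p}}-\frac{c_{1}}{2}\geq 0$ and $z>1$ when $\omega_{2}=2k_{\mathfrak{p}}+c_{1}-2c_{2}-4h_{\mathfrak{p}}\geq 0$. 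For your quadruple one computes $c_{1}=\tfrac{1}{e}$, $c_{2}=\tfrac{1}{n}$, $l_{\mathfrak{p}}=\tfrac{1}{e}$, $k_{\mathfrak{p}}=\tfrac{1}{n}$, $h_{\mathfrak{p}}=0$, whence $\omega_{1}=\tfrac{1}{4}-\tfrac{1}{n}\geq 0$ (as $n\geq 4$ whenever a proper divisor exists) and $\omega_{2}=\tfrac{1}{e}>0$; hence $1<z<\beta$, giving $x_{d}=z\neq 1$ and $x_{d}\neq y_{d}$ with no polynomial analysis needed. Supplying this two-line computation closes the gap and makes your argument complete.
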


\pagebreak
\begin{center}
  Table A: Standard quadruples with $G$ simple, $H\neq \{e\}$
\end{center}

\medskip
\begin{center}
\begin{tabular}{|c|c|c|c|c|c|}
  \hline
  % after \\: \hline or \cline{col1-col2} \cline{col3-col4} ...
  Type A& $\mathfrak{g}$ & $\mathfrak{h}$ & $\mathfrak{l}$ & $\mathfrak{k}$ & Remarks \\
  \hline
  1 &$\mathfrak{su}(n_{1}n_{2}n_{3}k)$ & $s(n_{1}n_{2}n_{3}\mathfrak{u}(k))$&$s(n_{1}\mathfrak{u}(n_{2}n_{3}k))$&
  $s(n_{1}n_{2}\mathfrak{u}(n_{3}k))$&  $k\geq 1, n_{i}\geq 2$ \\
  \hline
  2 & $\mathfrak{so}(n_{1}n_{2}n_{3}k)$ & $n_{1}n_{2}n_{3}\mathfrak{so}(k)$ & $n_{1}\mathfrak{so}(n_{2}n_{3}k)$ &
   $n_{1}n_{2}\mathfrak{so}(n_{3}k)$ & $k\geq 2, n_{i}\geq 2$ \\
  \hline
  3 & $\mathfrak{so}(n_{1}n_{2}k)$ & $\oplus_{i=1}^{l}\mathfrak{h}_{i}$ & $n_{1}\mathfrak{so}(n_{2}k)$
  &$n_{1}n_{2}\mathfrak{so}(k)$ & $k\geq 3, n_{i}\geq 2$ \\
  \hline
  4 & $\mathfrak{sp}(n_{1}n_{2}n_{3}k)$ & $n_{1}n_{2}n_{3}\mathfrak{sp}(k)$ & $n_{1}\mathfrak{sp}(n_{2}n_{3}k)$
  & $n_{1}n_{2}\mathfrak{sp}(n_{3}k)$  & $k\geq 1, n_{i}\geq 2$ \\
  \hline
  5 & $\mathfrak{e}_{6}$ & $\mathds{R}^{6}$ & $\mathfrak{so}(10)\oplus \mathds{R}$
   & $\mathfrak{so}(8)\oplus \mathds{R}^{2}$ &  \\
  \hline
  6 & $\mathfrak{e}_{7}$ & $7\mathfrak{su}(2)$ & $\mathfrak{so}(12)\oplus \mathfrak{su}(2)$
  & $\mathfrak{so}(8)\oplus 3\mathfrak{su}(2)$ &  \\
  \hline
  7 & $\mathfrak{e}_{8}$ & $\mathds{R}^{8}$ & $\mathfrak{so}(16)$ & $2\mathfrak{so}(8)$ &  \\
  \hline
  8 & $\mathfrak{e}_{8}$ & $\mathds{R}^{8}$ & $\mathfrak{so}(16)$ & $8\mathfrak{su}(2)$ &  \\
  \hline
  9 & $\mathfrak{e}_{8}$ & $\mathds{R}^{8}$ & $2\mathfrak{so}(8)$ & $8\mathfrak{su}(2)$ &  \\
  \hline
  10 & $\mathfrak{e}_{8}$ & $8\mathfrak{su}(2)$ & $\mathfrak{so}(16)$ & $2\mathfrak{so}(8)$ &  \\
  \hline
  11 & $\mathfrak{e}_{8}$ & $2\mathfrak{su}(3)$ & $\mathfrak{so}(16)$ & $2\mathfrak{so}(8)$ &  \\
  \hline
\end{tabular}
\end{center}

\medskip

\begin{center}
  Table B: Standard quadruples with $G$ simple, $H=\{e\}$
\end{center}

\medskip

\begin{center}
\begin{tabular}{|c|c|c|c|c|}
  \hline
  % after \\: \hline or \cline{col1-col2} \cline{col3-col4} ...
  Type B& $\mathfrak{g}$ & $\mathfrak{l}$ & $\mathfrak{k}$ & Remarks \\
  \hline
  1 & $\mathfrak{so}(n_{1}n_{2}k)$ & $n_{1}\mathfrak{so}(n_{2}k)$   & $n_{1}n_{2}\mathfrak{so}(k)$
  & $k\geq 3, n_{i}\geq 2$ \\
  \hline
  2 & $\mathfrak{so}(nk)$ & $n\mathfrak{so}(k)$ & $\oplus_{i=1}^{l}\mathfrak{h}_{i}$ & $k\geq 3, n\geq 2$ \\
  \hline
  3 & $\mathfrak{sp}(n_{1}n_{2}k)$ & $n_{1}\mathfrak{sp}(n_{2}k)$   & $n_{1}n_{2}\mathfrak{sp}(k)$
  & $k\geq 1, n_{i}\geq 2$ \\
  \hline
  4 & $\mathfrak{so}(8)$ & $\mathfrak{so}(7)$ & $\mathfrak{g}_{2}$ &  \\
  \hline
  5 & $\mathfrak{f}_{4}$ & $\mathfrak{so}(9)$ & $\mathfrak{so}(8)$ &  \\
  \hline
  6 & $\mathfrak{e}_{6}$ & $3\mathfrak{su}(3)$ & $3\mathfrak{so}(3)$ &  \\
  \hline
  7 & $\mathfrak{e}_{7}$ & $\mathfrak{su}(8)$ & $\mathfrak{so}(8)$ &  \\
  \hline
  8 & $\mathfrak{e}_{8}$  & $\mathfrak{so}(16)$ & $2\mathfrak{so}(8)$ &  \\
  \hline
  9 & $\mathfrak{e}_{8}$  & $\mathfrak{so}(16)$ & $\mathfrak{so}(9)$ &  \\
  \hline
  10 & $\mathfrak{e}_{8}$ & $\mathfrak{so}(16)$ & $8\mathfrak{su}(2)$ &  \\
  \hline
  11 & $\mathfrak{e}_{8}$  & $\mathfrak{so}(16)$ & $2\mathfrak{so}(5)$ &  \\
  \hline
  12 & $\mathfrak{e}_{8}$ & $\mathfrak{so}(16)$ & $2\mathfrak{su}(3)$ &  \\
  \hline
  13 & $\mathfrak{e}_{8}$ & $\mathfrak{su}(9)$ & $\mathfrak{so}(9)$ &  \\
  \hline
  14 & $\mathfrak{e}_{8}$ & $\mathfrak{su}(9)$ & $2\mathfrak{su}(3)$ &  \\
  \hline
  15 & $\mathfrak{e}_{8}$ & $2\mathfrak{so}(8)$ & $8\mathfrak{su}(2)$ &  \\
  \hline
  16 & $\mathfrak{e}_{8}$ & $2\mathfrak{so}(8)$ & $2\mathfrak{su}(3)$ &  \\
  \hline
  17 & $\mathfrak{e}_{8}$ & $2\mathfrak{su}(5)$ & $2\mathfrak{so}(5)$ &  \\
  \hline
  18 & $\mathfrak{e}_{8}$  & $4\mathfrak{su}(3)$ & $4\mathfrak{so}(3)$ &  \\
  \hline
\end{tabular}
\end{center}

\begin{rem}
In this paper, $nG$ means $G\times G\times \cdots \times G $ ($n$ times).
\end{rem}

In Section 2, we survey some results on homogeneous Einstein metrics. In particular, we recall some results of M. Wang and W. Ziller on naturally reductive and non-naturally reductive Einstein metrics. In Section 3, we give the definition and classification of standard quadruples. Section 4 is devoted to the calculation of Ricci curvature of the related coset spaces. The main results of this paper are proved in Section 5. To make the main proofs of the paper more concise, we  collect some repetitive case by case calculations in Section 5 as two appendixes.

\section{Naturally reductive and Non-naturally reductive Einstein metrics}
In this section, we recall some results on   naturally reductive and non-naturally reductive Einstein metrics,
 for details, see \cite{ams12,dz79}.

Let $(M,g)$ be a connected Riemannian manifold and $I(M,g)$ the full group of  isometries of $M$. Given a Lie subgroup $G$ of $I(M, g)$, the Riemannian manifold $(M,g)$ is said to be $G$-homogeneous if  $G$  acts transitively on $M$. For a $G$-homogeneous Riemannian manifold, we fix a point $o\in M$ and identify $M$ with $G/H$, where
$H$ is the isotropy subgroup of $G$ at $o$. Let $\mathfrak{g}, \mathfrak{h}$ be the Lie algebras of $G$ and $H$ respectively. Then $\mathfrak{g}$ has a
reductive decomposition $\mathfrak{g}=\mathfrak{h}+\mathfrak{m}$ (direct sum of subspaces), where $\mathfrak{m}$ is a subspace of $\mathfrak{g}$ satisfying
 $\mathrm{Ad}(H)(\mathfrak{m})\subset \mathfrak{m}$. Then one can identify $\mathfrak{m}$  with $T_{o}M$ through the map
 $$X\rightarrow \frac{d}{dt}|_{t=0}(\exp(tX)\cdot o).$$
  In this case, one can pull back the inner product $g_{o}$ on $T_{o}M$ to get an inner product on $\mathfrak{m}$, denoted by
 $\langle ,\rangle$. Given $X\in \mathfrak{g}$, we  denote by $X_{\mathfrak{m}}$ the $\mathfrak{m}$-component of $X$. Then a homogeneous Riemannian metric on $M$ is said to be naturally reductive if there exists a transitive subgroup $G$ and $\mathfrak{m}$ as above such that
 $$\langle [Z,X]_{\mathfrak{m}},Y\rangle +\langle X,[Z,Y]_{\mathfrak{m}}\rangle=0, \quad \forall X, Y, Z \in \mathfrak{m}.$$

 In \cite{dz79},  D'Atri and  Ziller  investigated naturally reductive metrics among the left invariant metrics on compact Lie groups, and give a complete description of this type of  metrics on simple Lie groups. Now we recall the main results of them.

 Let $G$ be a compact connected semisimple Lie group,  and $H$ a closed subgroup of $G$. Denote by  $B$ the negative of the Killing form of $\mathfrak{g}$. Then $B$
 is an $\mathrm{Ad}(G)$-invariant inner product on $\mathfrak{g}$. Let $\mathfrak{m}$ be the orthogonal complement of $\mathfrak{h}$ with respect to $B$.
 Then we have $$\mathfrak{g}=\mathfrak{h}\oplus\mathfrak{m}, \quad \mathrm{Ad}(H)(\mathfrak{m})\subset \mathfrak{m}.$$
 Let $$\mathfrak{h}=\mathfrak{h}_{0}\oplus \mathfrak{h}_{1}\oplus \mathfrak{h}_{2}\oplus \cdots \oplus \mathfrak{h}_{p} $$ be the decomposition of $\mathfrak{h}$ into ideals,
  where $\mathfrak{h}_{0}$ is the center of $\mathfrak{h}$ and $\mathfrak{h}_{i}$ $(i=1,\ldots,p)$ are simple ideals of $\mathfrak{h}$.
 Let $A_{0}|_{\mathfrak{h}_{0}}$ be an arbitrary metric on $\mathfrak{h}_{0}$.
 \begin{thm}[\cite{dz79}] \label{nat-thm}
 Keep the notation as above. Then a left invariant metric on $G$ of the form
 \begin{equation}\label{naturally reductive}
   \langle,\rangle= xB|_{m}+A_{0}|_{\mathfrak{h}_{0}}+ u_{1}B|_{\mathfrak{h}_{1}}+\cdots+u_{p}B|_{\mathfrak{h}_{p}},
 \end{equation}
 where $x,u_{1},\ldots,u_{p}$ are positive real numbers, must be naturally reductive with respect to $G\times H$, where $G\times H$ acts on $G$ by $(g,h)y=gyh^{-1}$.

 Moveover, if a left invariant metric $\langle ,\rangle$ on a compact simple Lie group $G$ is naturally reductive, then there exists a closed subgroup $H$ of $G$
 such that the metric $\langle , \rangle$ is given by the form \eqref{naturally reductive}.
\end{thm}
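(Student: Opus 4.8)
The plan is to prove the two assertions separately, both through the single device of viewing $G$ itself as a homogeneous space of the larger group $G\times H$. The group $G\times H$ acts transitively on $G$ by $(g,h)\cdot y=gyh^{-1}$, with isotropy subgroup at $e$ the diagonal $\Delta H=\{(h,h):h\in H\}$, so $G\cong (G\times H)/\Delta H$; the projection $p(g,h)=gh^{-1}$ identifies $T_eG\cong\mathfrak{g}$ via $dp(\xi,\eta)=\xi-\eta$, whose kernel is $\Delta\mathfrak{h}=\{(W,W):W\in\mathfrak{h}\}\subset\mathfrak{g}\oplus\mathfrak{h}$. The whole argument rests on the standard characterization of naturally reductive metrics by invariant bilinear forms: a $G'$-invariant metric on $G'/H'$ with reductive complement $\mathfrak{q}=(\mathfrak{h}')^{\perp_Q}$ is naturally reductive as soon as it is induced by an $\mathrm{Ad}(G')$-invariant symmetric form $Q$ on $\mathfrak{g}'$, where $Q$ is required to be positive definite only on $\mathfrak{q}$, not on all of $\mathfrak{g}'$. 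Indeed, $\mathrm{Ad}(G')$-invariance gives $Q([Z,X],Y)+Q(X,[Z,Y])=0$, and since $\mathfrak{q}\perp_Q\mathfrak{h}'$ the $\mathfrak{h}'$-components drop out of $[Z,X]$ and $[Z,Y]$, leaving exactly the naturally reductive identity on $\mathfrak{q}$.

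For sufficiency I would take $Q=xB_{\mathfrak{g}}\oplus\beta$ on $\mathfrak{g}\oplus\mathfrak{h}$, where $\beta=\beta_0\oplus\sum_{i=1}^{p}b_iB|_{\mathfrak{h}_i}$ with $\beta_0$ a form on the center $\mathfrak{h}_0$ and $b_i\in\mathds{R}\setminus\{-x\}$ to be chosen; this $Q$ is $\mathrm{Ad}(G\times H)$-invariant since $B_{\mathfrak{g}}$ is $\mathrm{Ad}(G)$-invariant and $\beta$ is $\mathrm{Ad}(H)$-invariant. A direct computation of $\mathfrak{q}=(\Delta\mathfrak{h})^{\perp_Q}$ and of the induced inner product on $\mathfrak{q}\cong\mathfrak{g}$ shows that it equals $xB|_{\mathfrak{m}}$ on $\mathfrak{m}$ and $\tfrac{xb_i}{b_i+x}\,B|_{\mathfrak{h}_i}$ on each $\mathfrak{h}_i$. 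Solving $u_i=\tfrac{xb_i}{b_i+x}$, i.e. $b_i=\tfrac{xu_i}{x-u_i}$, realizes every $u_i>0$, with $b_i<-x$ precisely when $u_i>x$; this is exactly why $Q$ must be allowed to be indefinite on $\mathfrak{g}\oplus\mathfrak{h}$ while remaining positive definite on $\mathfrak{q}$. A suitable choice of $\beta_0$ realizes the prescribed positive-definite $A_0$ on the abelian $\mathfrak{h}_0$. Since the induced metric is then $\langle,\rangle$ of the form \eqref{naturally reductive}, the criterion above makes it naturally reductive with respect to $G\times H$.

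For necessity, suppose $\langle,\rangle$ is a naturally reductive left-invariant metric on the compact simple group $G$, realized by some transitive isometry group $G'$ containing $L(G)$. Here I would invoke the structure theorem for the identity component of the isometry group of a left-invariant metric on a compact simple Lie group, which places $G'\subseteq I_0(G,\langle,\rangle)=L(G)\cdot R(H)$, where $H=\{a\in G:\mathrm{Ad}(a)\ \text{is orthogonal for}\ \langle,\rangle_e\}$ is a closed subgroup. Because the center of the simple group $G$ is discrete, the Lie algebra of this isometry group is a copy of $\mathfrak{g}\oplus\mathfrak{h}$ acting exactly as in the first paragraph, so $G'$ corresponds to a subgroup $G\times H'$ of $G\times H$. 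Natural reductivity then produces an $\mathrm{Ad}(G\times H')$-invariant symmetric form $Q$ on $\mathfrak{g}\oplus\mathfrak{h}'$ inducing $\langle,\rangle$; simplicity of $\mathfrak{g}$ forces the cross terms $Q(\mathfrak{g},\mathfrak{h}')$ to vanish and $Q|_{\mathfrak{g}}=xB_{\mathfrak{g}}$, so $Q=xB_{\mathfrak{g}}\oplus\beta$. Running the induced-metric computation of the second paragraph backwards, and using Schur's lemma to diagonalize $\beta$ on the simple ideals of $\mathfrak{h}'$, yields precisely the form \eqref{naturally reductive}.

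The hard part is the necessity direction, and within it two inputs. The first is the isometry-group structure theorem that confines every transitive, naturally reductive realization to the left--right action of $G\times H$; this is exactly where compactness and simplicity of $G$ are indispensable, since for non-simple $G$ product metrics and extra isometries escape this picture. The second is extracting the diagonal form on $\mathfrak{h}$: when $\mathfrak{h}$ has pairwise isomorphic simple ideals, Schur's lemma permits off-diagonal blocks in $\beta$, so one must argue that these can be removed by an appropriate choice of the subgroup $H$ and of the ideal decomposition of $\mathfrak{h}$ so that the metric operator is genuinely diagonal. By contrast, the sufficiency direction is a clean construction once one grants the invariant form $Q$ the freedom to be indefinite off of $\mathfrak{q}$.
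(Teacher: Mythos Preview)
The paper does not prove this theorem: it is quoted verbatim as a result of D'Atri--Ziller \cite{dz79} and used as a black box, so there is no ``paper's own proof'' to compare against. Your sketch is in fact the argument one finds in \cite{dz79}: realize $G$ as $(G\times H)/\Delta H$, build an $\mathrm{Ad}(G\times H)$-invariant (possibly indefinite) form $Q=xB\oplus\beta$ on $\mathfrak{g}\oplus\mathfrak{h}$, and compute that the induced metric on the $Q$-orthogonal complement of $\Delta\mathfrak{h}$ is exactly \eqref{naturally reductive}; for the converse, use the Ochiai--Takahashi/D'Atri--Ziller description of $I_0(G,\langle,\rangle)$ for $G$ compact simple, and then Kostant's theorem that a naturally reductive metric comes from an $\mathrm{Ad}$-invariant form.

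One point worth flagging as a potential gap: in the necessity direction you write ``Natural reductivity then produces an $\mathrm{Ad}(G\times H')$-invariant symmetric form $Q$ on $\mathfrak{g}\oplus\mathfrak{h}'$ inducing $\langle,\rangle$''. This is not the definition of natural reductivity but a theorem (Kostant), and it requires that the transitive group act \emph{effectively} (or almost effectively) and that the metric be naturally reductive with respect to \emph{some} reductive complement for that group; you should state this explicitly rather than slide over it. Likewise, your remark that off-diagonal blocks among isomorphic simple ideals of $\mathfrak{h}$ ``can be removed by an appropriate choice of the subgroup $H$'' is correct but is exactly the place where D'Atri--Ziller do real work, so in a full proof this deserves more than a sentence.
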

 Based on the above theorem, D'Atri and  Ziller \cite{dz79} obtained a large number of naturally reductive Einstein metrics on compact simple Lie groups.

  Now we recall some results of Wang and Ziller. Let $(G/H,g_{B})$ be a compact connected homogeneous  space with the reductive decomposition
  $\mathfrak{g}=\mathfrak{h}+\mathfrak{m}$.  Denote by $\chi$ the isotropy representation of $\mathfrak{h}$ on $\mathfrak{m}$. Let $C_{\chi,\mathfrak{m}}$ be the Casimir operator defined by $-\mathop{\sum}\limits_{i}(\mathrm{ad}(X_{i})\mathrm{ad}(X_{i}))|_{\mathfrak{m}}$, where
 $\{X_{i}\}$ is a $B$-orthonormal basis of $\mathfrak{h}$.  Wang and Ziller obtained a sufficient and necessary condition for $g_{B}$ to be Einstein.
 \begin{thm}[\cite{wz85}]
 The standard homogeneous metric $g_{B}$ on $G/H$ is Einstein if and only if there exists a  constant $a$ such that $C_{\chi, \mathfrak{m}}=a\,\mathrm{id}$, where $\mathrm{id}$ denotes the identity transformation.
 \end{thm}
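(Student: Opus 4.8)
The entire content of the statement is a Ricci formula for the normal metric; once that is in hand, the equivalence is a one-line piece of linear algebra. So the plan is to establish that, for $X,Y\in\mathfrak{m}$,
\[
\mathrm{Ric}_{g_{B}}(X,Y)=\tfrac14 B(X,Y)+\tfrac12 B(C_{\chi,\mathfrak{m}}X,Y),
\]
equivalently that the Ricci endomorphism of $g_{B}$ equals $\tfrac14\,\mathrm{id}+\tfrac12 C_{\chi,\mathfrak{m}}$ as a $B$-symmetric, $\mathrm{Ad}(H)$-equivariant operator on $\mathfrak{m}$. The final equivalence then drops out with no analytic difficulty.

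First I would record two algebraic facts about the Casimir. Since $B$ is the negative Killing form of the semisimple algebra $\mathfrak{g}$, the Casimir operator of the adjoint representation with respect to $B$ is the identity: writing $C_{\mathfrak{g}}=-\sum_{Z}\mathrm{ad}(Z)^{2}$ for a $B$-orthonormal basis $\{Z\}$ of $\mathfrak{g}$, one has $C_{\mathfrak{g}}=\mathrm{id}_{\mathfrak{g}}$, which holds on each simple ideal and passes to all of $\mathfrak{g}$ since the Killing form is block-diagonal over the ideals. Splitting the basis of $\mathfrak{g}$ into a $B$-orthonormal basis $\{X_{a}\}$ of $\mathfrak{h}$ and $\{e_{i}\}$ of $\mathfrak{m}$, and using $[\mathfrak{h},\mathfrak{m}]\subset\mathfrak{m}$, the restriction to $X\in\mathfrak{m}$ reads $C_{\chi,\mathfrak{m}}X-\sum_{i}\mathrm{ad}(e_{i})^{2}X=X$. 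Taking $B$-inner products and using invariance of $B$ then yields the two identities $\sum_{i}|[X,e_{i}]_{\mathfrak{h}}|^{2}=B(C_{\chi,\mathfrak{m}}X,X)$ and $\sum_{i}|[X,e_{i}]_{\mathfrak{m}}|^{2}=B\big((\mathrm{id}-2C_{\chi,\mathfrak{m}})X,X\big)$: the first by pairing $[X,e_{i}]_{\mathfrak{h}}$ against the basis $\{X_{a}\}$ and reorganizing with $B$-invariance, the second by subtracting it from $\sum_{i}|[X,e_{i}]|^{2}=B((\mathrm{id}-C_{\chi,\mathfrak{m}})X,X)$.

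Next I would feed these into the curvature of the normal metric. Because $g_{B}$ is normal it is naturally reductive, and its curvature is governed by the classical formula $\langle R(X,Y)Y,X\rangle=\tfrac14|[X,Y]_{\mathfrak{m}}|^{2}+|[X,Y]_{\mathfrak{h}}|^{2}$ for $B$-orthonormal $X,Y\in\mathfrak{m}$. Tracing over a $B$-orthonormal basis of $\mathfrak{m}$ gives $\mathrm{Ric}_{g_{B}}(X,X)=\tfrac14\sum_{i}|[X,e_{i}]_{\mathfrak{m}}|^{2}+\sum_{i}|[X,e_{i}]_{\mathfrak{h}}|^{2}$, and substituting the two identities above collapses this to $\tfrac14 B(X,X)+\tfrac12 B(C_{\chi,\mathfrak{m}}X,X)$, which polarizes to the claimed formula. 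Equivalently, one may realize $g_{B}$ as the base of the Riemannian submersion $(G,B)\to(G/H,g_{B})$ with totally geodesic fibers isometric to $(H,B|_{\mathfrak{h}})$ and apply O'Neill's formula, using $\mathrm{Ric}_{(G,B)}=\tfrac14 B$ together with the fact that the integrability tensor satisfies $A_{X}Y=\tfrac12[X,Y]_{\mathfrak{h}}$; this reproduces exactly the same $C_{\chi,\mathfrak{m}}$-term.

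Finally the equivalence is immediate: a $G$-invariant metric on $G/H$ is Einstein exactly when its Ricci endomorphism at $o$ is a scalar multiple of the identity on $\mathfrak{m}$, and by the formula this endomorphism is $\tfrac14\,\mathrm{id}+\tfrac12 C_{\chi,\mathfrak{m}}$, which is a multiple of the identity if and only if $C_{\chi,\mathfrak{m}}=a\,\mathrm{id}$ for some constant $a$; explicitly one gets $\mathrm{Ric}_{g_{B}}=c\,g_{B}$ with $c=\tfrac14+\tfrac12 a$. I expect the only genuine labor to be the honest verification of the normal-space curvature formula and the careful Casimir bookkeeping of the second paragraph, where keeping track of the $\mathfrak{h}$- versus $\mathfrak{m}$-components of brackets is the step most prone to sign and coefficient errors; the Einstein equivalence itself carries no real obstacle.
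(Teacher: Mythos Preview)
The paper does not prove this theorem: it is quoted from Wang--Ziller \cite{wz85} as background, so there is no in-paper argument to compare against. Your proposal is a correct and standard proof; the Ricci identity $\mathrm{Ric}_{g_{B}}=\tfrac14\,\mathrm{id}+\tfrac12\,C_{\chi,\mathfrak{m}}$ that you derive (via the normal-metric curvature formula and the Casimir bookkeeping, or equivalently via O'Neill applied to $(G,B)\to(G/H,g_{B})$) is exactly the one underlying the cited result, and the final equivalence is, as you say, immediate linear algebra.
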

 Based on this theorem and some deep results on   representation theory, Wang and Ziller give a complete classification of standard Einstein manifolds $G/H$ for any compact simple Lie group $G$.

 Given a subalgebra $\mathfrak{h}$ of $\mathfrak{g}$, one can consider  the metric $g_{t}=B|_{\mathfrak{h}}+tB|_{\mathfrak{m}}, \quad t>0, $
as a left invariant metric on $G$. Clearly, $g_{t}$ is naturally reductive. If $t=1$, then $g_{t}$ is Einstein since it is bi-invariant. G. Jensen \cite{jensen73} first
studied the Einstein metrics of the form  $g_{t}$, where  $t\neq 1$. Subsequently  D'Atri and  Ziller proved the following
\begin{thm}[\cite{dz79}]\label{standard}
 Suppose $\mathfrak{h}$ is not an ideal in $\mathfrak{g}$. Then there exists a unique $t_0\neq 1$ with  $g_{t_0}$  Einstein if and only if the standard metric
on $G/H$ is  Einstein and there exists a constant $c$ such that $B_{\mathfrak{h}}=cB|\mathfrak{h}$.
Furthermore, in this case, we have $t_0>1$ and $g_{t_0}$ must be normal homogeneous with respect to $G\times H$.
In particular, if $\mathfrak{h}$ is abelian, then $t=1$ is the only real number such that $g_t$ is an Einstein metric.
\end{thm}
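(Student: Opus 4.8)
The plan is to compute the Ricci tensor of the left invariant metric $g_t=B|_{\mathfrak h}+tB|_{\mathfrak m}$ on $G$ directly and to reduce the Einstein equation to a single quadratic in $t$. Since $B|_{\mathfrak h}+tB|_{\mathfrak m}$ is $\mathrm{Ad}(H)$-invariant, $g_t$ is invariant under $G\times H$ (in particular it is naturally reductive, by Theorem \ref{nat-thm}), and its Ricci tensor is $\mathrm{Ad}(H)$-invariant; by this invariance together with the reductive structure the Ricci tensor is block diagonal with respect to the $B$-orthogonal splitting $\mathfrak g=\mathfrak h\oplus\mathfrak m$. Hence $g_t$ is Einstein if and only if there is a common $\lambda$ with $\mathrm{Ric}_{g_t}|_{\mathfrak h}=\lambda B|_{\mathfrak h}$ and $\mathrm{Ric}_{g_t}|_{\mathfrak m}=\lambda\,tB|_{\mathfrak m}$, where I view both restrictions as bilinear forms.

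First I would apply the standard Ricci formula for a left invariant metric on the (unimodular) group $G$, namely $\mathrm{Ric}(X,X)=-\tfrac12\sum_i|[X,E_i]|^2-\tfrac12\,\mathrm{tr}\,\mathrm{ad}_X^2+\tfrac14\sum_{i,j}\langle[E_i,E_j],X\rangle^2$, in a $g_t$-orthonormal basis built from $B$-orthonormal bases $\{u_\alpha\}$ of $\mathfrak h$ and $\{t^{-1/2}v_i\}$ of $\mathfrak m$. The computation uses the total antisymmetry of $(X,Y,Z)\mapsto B([X,Y],Z)$ coming from $\mathrm{Ad}(G)$-invariance of $B$, and the fact that $\mathrm{tr}\,\mathrm{ad}_X^2=-B(X,X)$ is metric independent. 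Splitting $[\mathfrak h,\mathfrak h]\subset\mathfrak h$, $[\mathfrak h,\mathfrak m]\subset\mathfrak m$, the structure constants reorganize into two invariants: the Killing form $B_{\mathfrak h}$ of $\mathfrak h$ (which governs $\sum_{\beta,\gamma}B([u_\alpha,u_\beta],u_\gamma)^2$) and the Casimir operator $C_{\chi,\mathfrak m}$ (which governs $\sum_{\beta,i}B([u_\beta,v_i],v_j)^2$). Carrying this out gives, as bilinear forms on $\mathfrak h$, $\mathrm{Ric}_{g_t}|_{\mathfrak h}=\tfrac{1}{4t^2}B|_{\mathfrak h}+\tfrac14\big(1-\tfrac1{t^2}\big)B_{\mathfrak h}$ (note the value $\tfrac14B|_{\mathfrak h}$ at $t=1$, consistent with bi-invariance), and on $\mathfrak m$ an analogous expression linear in $C_{\chi,\mathfrak m}$ and in the purely-$\mathfrak m$ term $\sum_{j,k}B([v_i,v_j],v_k)^2$.

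This already yields the ``only if'' direction: if $g_{t_0}$ is Einstein for some $t_0\neq1$, then $\mathrm{Ric}_{g_{t_0}}|_{\mathfrak h}=\lambda B|_{\mathfrak h}$ forces, since $1-t_0^{-2}\neq0$, the Killing form $B_{\mathfrak h}$ to be a multiple of $B|_{\mathfrak h}$; and $\mathrm{Ric}_{g_{t_0}}|_{\mathfrak m}$ being proportional to $B|_{\mathfrak m}$ forces $C_{\chi,\mathfrak m}=a\,\mathrm{id}$, which by the Wang--Ziller criterion quoted above is equivalent to $g_B$ being Einstein on $G/H$. Conversely, assuming $B_{\mathfrak h}=cB|_{\mathfrak h}$ and $C_{\chi,\mathfrak m}=a\,\mathrm{id}$, the identity $\mathrm{tr}\,\mathrm{ad}_X^2=-B(X,X)$ split over $\mathfrak h\oplus\mathfrak m$ gives $a\dim\mathfrak m=(1-c)\dim\mathfrak h$ together with $\sum_{j,k}B([v_i,v_j],v_k)^2=1-2a$ (independent of $i$), whence the blocks are scalar with $r_1=\tfrac c4+\tfrac{1-c}{4t^2}$ and $r_2=\tfrac{1+2a}{4t}-\tfrac{a}{2t^2}$. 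The Einstein equation $r_1=r_2$ then reads $ct^2-(1+2a)t+(1-c+2a)=0$. One checks directly that $t=1$ is always a root (since $g_1=B$ is bi-invariant, hence Einstein), so the unique second root is $t_0=\tfrac{1-c+2a}{c}$; this gives existence and uniqueness when $c\neq0$, while if $\mathfrak h$ is abelian ($c=0$) the equation degenerates to $(1+2a)(1-t)=0$ with the single solution $t=1$, which is the last assertion.

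The remaining point --- and, I expect, the genuine obstacle --- is to show $t_0>1$ and the normality of $g_{t_0}$. These two are equivalent: an explicit description of the normal homogeneous metrics of $(G\times H)/\Delta H$ (obtained by orthogonally projecting a bi-invariant metric $B\perp\beta B|_{\mathfrak h}$ on $\mathfrak g\oplus\mathfrak h$, with $\beta>0$) shows that they are precisely the $g_t$ with $t\ge1$, so $g_{t_0}$ is normal if and only if $t_0>1$. Writing $n_1=\dim\mathfrak h$, $n_2=\dim\mathfrak m$, $n=\dim\mathfrak g$ and using $a\,n_2=(1-c)n_1$, one finds $t_0>1\iff 2an>n_1$, which is exactly the statement that $\tfrac{d^2}{dt^2}\big|_{t=1}$ of the normalized scalar curvature $\mathrm{Scal}(g_t)\,\mathrm{Vol}(g_t)^{2/n}$ equals $\tfrac{n_2}{4n}(n_1-2an)<0$, i.e. that the bi-invariant metric is a local maximum of the normalized total scalar curvature along this family. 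The hard part is to establish the inequality $2a\dim\mathfrak g>\dim\mathfrak h$ in general; I would derive it from the representation-theoretic relations tying the Casimir eigenvalue $a$ of the isotropy representation to the ratio $c=B_{\mathfrak h}/B|_{\mathfrak h}$, using crucially that $\mathfrak h$ is a proper non-abelian subalgebra of the simple $\mathfrak g$ on which the standard metric is Einstein (so that $0<a\le\tfrac12$ and $c<1$). The sign of $n_1-2an$ is the one delicate comparison on which both $t_0>1$ and the normality depend.
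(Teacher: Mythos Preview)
The paper does not supply its own proof of this theorem; it is quoted from D'Atri--Ziller \cite{dz79} as background. So there is no in-paper argument to compare against, only the original source.

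That said, your approach is the standard one and most of it is correct. The Ricci blocks
\[
r_1=\tfrac{c}{4}+\tfrac{1-c}{4t^{2}},\qquad
r_2=\tfrac{1+2a}{4t}-\tfrac{a}{2t^{2}},
\]
are right, the reduction to the quadratic $ct^{2}-(1+2a)t+(1-c+2a)=0$ is correct, the ``only if'' direction (forcing $B_{\mathfrak h}=cB|_{\mathfrak h}$ and $C_{\chi,\mathfrak m}=a\,\mathrm{id}$ when $t\neq1$) is correct, and the abelian case is handled properly. Your identification of the second root $t_0=(1-c+2a)/c$ and the equivalence ``$g_{t_0}$ normal homogeneous for $G\times H$ $\Longleftrightarrow$ $t_0>1$'' via the explicit parametrization of the normal metrics on $(G\times H)/\Delta H$ is also fine.

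The genuine gap --- which you yourself flag --- is the inequality $t_0>1$, equivalently $c<\tfrac12+a$, equivalently $2a\dim\mathfrak g>\dim\mathfrak h$. Your variational reformulation (the sign of the second derivative of the volume-normalized scalar curvature at $t=1$) is correct but circular: that sign \emph{is} the inequality in question, so no progress has been made. The closing sentence (``I would derive it from the representation-theoretic relations tying the Casimir eigenvalue $a$ \ldots\ to the ratio $c$'') is not a proof: the only such relation you have produced is the trace identity $a\,n_2=(1-c)\,n_1$, and this alone does not force $c<\tfrac12+a$. You also slip in ``the simple $\mathfrak g$'' at this step, whereas the ambient hypothesis in the paper is only that $G$ is compact semisimple with $\mathfrak h$ not an ideal; restricting to simple $\mathfrak g$ is an additional, unjustified assumption. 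Until the inequality $2a\dim\mathfrak g>\dim\mathfrak h$ is actually established, the ``$t_0>1$ and normal homogeneous'' clause of the theorem remains unproved.
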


Many non-naturally reductive Einstein metrics can be constructed by Riemannian submersions. We recall the following result, see \cite{wz85} and page 255 of \cite{besse87book}.
\begin{thm}[\cite{besse87book}]
Let $F\rightarrow M\rightarrow B$ be a Riemannian submersion with totally geodesic fibres. Assume that the metrics
on $F$, $M$ and $B$ are Einstein with Einstein constant $r_{F}$, $r_{M}$, $r_{B}$ respectively, and $r_{F}>0$.
Furthermore, suppose $M$ is not locally  a Riemannian product of $F$ and $B$. Then the metric $g_{t}$ obtained by scaling the metric on $M$ in the direction of $F$ by a factor $t\neq 1, t>0$ is Einstein if and only if
$r_{F}\neq \frac{1}{2}r_{B}$.
\end{thm}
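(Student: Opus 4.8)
The plan is to follow the canonical-variation approach of O'Neill and Besse. First I would set up the splitting $TM = \mathcal{V} \oplus \mathcal{H}$ into the vertical distribution $\mathcal{V}$ (tangent to the fibres) and the horizontal distribution $\mathcal{H}$, and use that totally geodesic fibres force the O'Neill tensor $T$ to vanish, so that only the integrability tensor $A$ survives. Writing $g=g_1$ for the given metric, the canonical variation is $g_t = t\,g|_{\mathcal{V}} + g|_{\mathcal{H}}$. I would then record the O'Neill formulas for the Ricci tensor of $g_t$ (Besse, Ch.~9): for $U,V\in\mathcal{V}$ and $X,Y\in\mathcal{H}$ that are orthonormal with respect to $g$,
\begin{align*}
\mathrm{ric}_{g_t}(U,V) &= \mathrm{ric}_{\hat g}(U,V) + t^2\,\langle AU, AV\rangle, \\
\mathrm{ric}_{g_t}(X,Y) &= \mathrm{ric}_{\check g}(X,Y) - 2t\,\langle A_X, A_Y\rangle,
\end{align*}
where $\hat g$, $\check g$ are the fibre and base metrics, $\langle AU, AV\rangle = \sum_i \langle A_{X_i}U, A_{X_i}V\rangle$ and $\langle A_X, A_Y\rangle = \sum_\alpha \langle A_X U_\alpha, A_X U_\alpha\rangle$, together with the fact that the mixed component $\mathrm{ric}_{g_t}(X,U)$ is, for each $t$, a fixed multiple of its value at $t=1$.

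The second step would be to use the three Einstein hypotheses to pin down the $A$-terms, which is the mechanism making the whole computation close up. Evaluating the first formula at $t=1$ and using that $M$ and $F$ are Einstein gives $\mathrm{ric}_g(U,V)=r_M\hat g(U,V)$ and $\mathrm{ric}_{\hat g}(U,V)=r_F\hat g(U,V)$, hence $\langle AU,AV\rangle = (r_M-r_F)\,\hat g(U,V)$; similarly, evaluating the second formula at $t=1$ and using that $M$ and $B$ are Einstein yields $\langle A_X,A_Y\rangle = \tfrac12(r_B-r_M)\,\check g(X,Y)$. Thus both quadratic $A$-terms are automatically proportional to the respective metrics, with no extra assumption required. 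Since $\langle AU,AU\rangle\ge 0$ and $\langle A_X,A_X\rangle\ge 0$, this forces $r_F\le r_M\le r_B$; the hypothesis that $M$ is not locally a product of $F$ and $B$ says precisely that $A\not\equiv 0$, giving the strict chain $r_F<r_M<r_B$, and in particular $r_B-r_F>0$. Finally, because $g$ is Einstein its mixed Ricci component vanishes, $\mathrm{ric}_g(X,U)=0$, so $\mathrm{ric}_{g_t}(X,U)=0$ for all $t$, and the Einstein condition for $g_t$ reduces to its vertical and horizontal parts.

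The last step is the algebra. Substituting the two proportionalities, $g_t$ is Einstein with constant $\lambda$ precisely when
\begin{align*}
r_F + t^2(r_M-r_F) &= \lambda\,t, \\
r_B - t(r_B-r_M) &= \lambda.
\end{align*}
Eliminating $\lambda$, every $r_M$-term cancels and one obtains $r_F(1-t^2)=r_B\,t(1-t)$; dividing by the factor $1-t$ (permissible since $t\neq 1$) leaves $r_F(1+t)=r_B\,t$, i.e.
$$t=\frac{r_F}{r_B-r_F}.$$
This is the unique candidate with $t\neq 1$; it is positive since $r_F>0$ and $r_B-r_F>0$, and it equals $1$ exactly when $r_F=\tfrac12 r_B$. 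Hence a value $t\neq 1$ making $g_t$ Einstein exists if and only if $r_F\neq\tfrac12 r_B$, which is the assertion. I expect the main obstacle to be the first step, namely deriving the canonical-variation Ricci formulas above and, in particular, checking that the mixed component scales so as to stay zero; once these standard formulas (Besse, Ch.~9) are in hand, the rest is the short computation just displayed.
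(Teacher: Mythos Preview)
The paper does not prove this theorem: it is quoted from Besse's book (page 255) as a known result and used as a tool, with no argument supplied. So there is no ``paper's own proof'' to compare against; what one can do is check your sketch against the standard canonical-variation argument in Besse, Chapter~9, which is exactly what your proposal reproduces.

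Your argument is correct. The key points are all present: (i) with $T\equiv 0$ the O'Neill/Besse formulas for $\mathrm{ric}_{g_t}$ on vertical and horizontal vectors reduce to the two lines you wrote; (ii) evaluating at $t=1$ and using the three Einstein hypotheses forces the $A$-terms to be proportional to the respective metrics, with the non-product hypothesis giving $A\not\equiv 0$ and hence $r_F<r_M<r_B$; (iii) the mixed Ricci vanishes at $t=1$ and hence for all $t$; (iv) the two scalar equations collapse, after cancelling $r_M$, to $r_F(1-t^2)=r_B\,t(1-t)$, yielding the unique $t=r_F/(r_B-r_F)$, which is positive and differs from $1$ exactly when $r_F\neq\tfrac12 r_B$. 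The only place to be slightly careful is the converse: once $t=r_F/(r_B-r_F)$ is found by eliminating $\lambda$, one should note that with this $t$ both equations are simultaneously satisfied for a common $\lambda$ (and the mixed block is zero), so $g_t$ really is Einstein; this is implicit in your derivation but worth stating.
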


Applying this theorem to the homogeneous case,
 Wang and Ziller obtained a great number of non-naturally reductive
Einstein metrics on standard homogeneous Einstein manifolds. In fact, in \cite{wz85}, they give a complete classification of the Riemannian submersions
 $K/H\rightarrow G/H\rightarrow G/K$, where $G$ is compact simple, such that the standard metrics
on $G/H$, $G/K$ are Einstein, and   there exists a constant $c>0$ such that $B_{\bar{\mathfrak{k}}}=cB|_{\bar{\mathfrak{k}}}$,
where $\bar{K}/\bar{H}$ is the corresponding effective (almost) quotient of $K/H$,
$B$ and $B_{\bar{\mathfrak{k}}}$  are the negative Killing forms of
$\mathfrak{g}=\mathrm{Lie}(G)$ and $\bar{\mathfrak{k}}=\mathrm{Lie}(\bar{K})$, respectively.

Up to now, most known examples of Einstein metrics on compact simple Lie groups are naturally reductive; see \cite{ams12,jensen73,mujtaba12,pope10}.
The problem of finding  left invariant Einstein metrics on compact Lie groups which are not naturally reductive is more difficult, and    is stressed by J.E. D'Atri and W. Ziller in \cite{dz79}.
In 1994, Mori initiated the study of this problem. Mori  showed that there exists   non-naturally reductive Einstein metrics on the Lie group $\mathrm{SU}(n)$ with $n\geq 6$
 by using the method of Riemannian submersions \cite{mori94}. Later, in \cite{ams12}, the authors established the existence of non-naturally reductive Einstein metrics on the compact simple Lie groups $\mathrm{SO}(n)$
  with $n\geq 11$, $\mathrm{Sp}(n)$ with  $n\geq 3$, and the exceptional groups $E_{6}$, $E_{7}$ and $E_{8}$.  Recently, some non-naturally reductive Einstein metrics have been found on the compact simple
 Lie groups $\mathrm{SU}(3)$, $\mathrm{SO}(5)$, $G_{2}$ and $F_{4}$; see \cite{cl14,glp11}. We summarize the above results as following
\begin{thm}{\rm (\cite{mori94,ams12,cl14,glp11})}\quad
 The compact simple Lie groups $\mathrm{SU}(n)\, (n\geq 6)$, $\mathrm{SO}(n)\, (n \geq 11)$,
 $\mathrm{Sp}(n)\, (n \geq 3)$, $G_{2}$, $F_{4}$, $E_{6}$, $E_{7}$ and $E_{8}$
 admit non-naturally reductive Einstein metrics.
\end{thm}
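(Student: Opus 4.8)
The statement is a compilation of the existence results established in \cite{mori94,ams12,cl14,glp11}, so my plan is to assemble a single uniform argument that reproduces each case and, in every case, to separately certify that the metric produced is genuinely non-naturally reductive. The common mechanism is as follows. For a compact simple $G$ in the list I would fix a chain of closed subgroups together with the induced $B$-orthogonal, $\mathrm{Ad}$-invariant decomposition of $\mathfrak{g}$, and consider the left-invariant metric that is a positive scalar multiple of $B$ on each summand. The Ricci tensor of such a diagonal metric is a rational function of the scaling parameters determined by the structure constants of the decomposition (for the two-parameter family $g_{(x,y)}$ this is precisely Proposition~\ref{prop4.4}), and the Einstein condition reduces to a polynomial system in the parameters. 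The first task is then to exhibit, for each group family, a solution of this system in which at least three of the scaling parameters are pairwise distinct and not all equal to $1$; such a solution cannot come from a bi-invariant or a two-scalar metric, which is what opens the door to non-natural-reductivity.

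For the families appearing in Table~B---that is, $\mathrm{SO}(n)$, $\mathrm{Sp}(n)$, $F_{4}$, $E_{6}$, $E_{7}$, $E_{8}$ realised as the ambient group of a basic quadruple $(G,L,K,\{e\})$---this first task is already done: Theorem~\ref{main2} furnishes an Einstein metric $g_{(x,y)}=B|_{\mathfrak{n}}+xB|_{\mathfrak{u}}+yB|_{\mathfrak{p}}$ with $x\neq 1$ and $x\neq y$ outside the finitely many small cases excluded there. The groups $\mathrm{SU}(n)$ and $G_{2}$ do not occur as an ambient group in Table~B, and the full ranges in the statement (for instance $n\geq 11$ for $\mathrm{SO}(n)$, $n\geq 6$ for $\mathrm{SU}(n)$, and the small ranks $\mathrm{Sp}(3),\mathrm{Sp}(4),\mathrm{Sp}(5)$) reach beyond what Theorem~\ref{main2} supplies; for these I would invoke the subgroup chains used by Mori \cite{mori94} for $\mathrm{SU}(n)$, by \cite{ams12} for the remaining orthogonal, symplectic and exceptional cases, and by \cite{cl14,glp11} for $G_{2}$ and the low-rank groups. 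In each the same diagonal ansatz on the appropriate decomposition yields an Einstein solution with distinct scalars, and the exact boundary values of $n$ are read off directly from those computations rather than re-derived here.

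The main obstacle is the second half of every case: proving that the Einstein metric just produced is not naturally reductive with respect to \emph{any} transitive group. Here I would rely entirely on the D'Atri--Ziller classification, Theorem~\ref{nat-thm}, which says that a left-invariant metric on compact simple $G$ is naturally reductive if and only if there is a closed subgroup $H'$ with $B$-orthogonal decomposition $\mathfrak{g}=\mathfrak{h}'\oplus\mathfrak{m}'$ such that the metric equals $x'B|_{\mathfrak{m}'}$ on $\mathfrak{m}'$ and is a scalar multiple of $B$ on each simple ideal of $\mathfrak{h}'$ (with an arbitrary inner product allowed on the center of $\mathfrak{h}'$). Writing the candidate metric in its $B$-eigenspace form, I would enumerate the ways the eigenspaces can be grouped into such a pair $(\mathfrak{h}',\mathfrak{m}')$: the single-scalar block $\mathfrak{m}'$ must be the $B$-orthocomplement of a subalgebra $\mathfrak{h}'$, and on $\mathfrak{h}'$ the remaining eigenspaces must line up with the simple ideals. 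The contradiction in each admissible grouping comes from the structural fact that the intermediate subalgebra (e.g.\ $\mathfrak{k}$ inside $\mathfrak{l}$) is not an ideal, so a metric equal to $B$ on it but to $x\neq 1$ times $B$ on its complement cannot be a scalar on each simple ideal of the larger algebra; combined with $x\neq y$, this defeats every candidate $(\mathfrak{h}',\mathfrak{m}')$.

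Executing this uniformly is the delicate part, because the list of admissible groupings depends on the ideal structure of each potential $\mathfrak{h}'$ and on whether any of the three scalars accidentally coincides with $1$. I would therefore carry out the non-reductivity verification family by family, along the quadruples of Table~B and the chains of \cite{mori94,ams12,cl14,glp11}, in each instance using the same two ingredients---the pairwise distinctness of the three scalars and the non-ideal position of the intermediate subalgebra---to exclude the D'Atri--Ziller normal form. Once both halves are in place for every group in the statement, the theorem follows.
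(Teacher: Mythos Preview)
The paper does not prove this theorem. It appears in Section~2 solely as a summary of results established in the cited works \cite{mori94,ams12,cl14,glp11}, with no argument supplied; its role is purely to survey the known state of the problem before the paper's own contributions begin. There is therefore no proof in the paper against which to compare your proposal.

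Your outline is a reasonable reconstruction of how such a proof would be assembled from the literature, and you are right that the paper's own Theorem~\ref{main2} recovers some of these cases as a by-product. But note that Table~B never realises $\mathrm{SU}(n)$ or $G_2$ as the ambient group $G$, and the values of $n$ it produces for $\mathrm{SO}(n)$ and $\mathrm{Sp}(n)$ (namely $n=n_1n_2k$ with $n_i\ge 2$) do not cover the full ranges $n\ge 11$ and $n\ge 3$: for instance $\mathrm{Sp}(3)$, $\mathrm{Sp}(5)$, $\mathrm{SO}(11)$, $\mathrm{SO}(13)$ are all missed. So even with Theorem~\ref{main2} in hand, the statement still rests essentially on the cited papers, which is exactly how the paper treats it. One further caution: invoking Theorem~\ref{main2} here would be a forward reference to a result proved only in Section~5; this is not circular, but it is not what the paper does.
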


 Up to now, it has been an open problem   whether there exists a left invariant non-naturally reductive Einstein metric on the compact simple Lie groups $\mathrm{SU}(n)$, with $n=4,5$, or $\mathrm{SO}(n)$, with  $ n=7,8,9,10$.

\section{Classification of standard quadruples}

Let $G$ be a compact semisimple connected Lie group, %$G/L$ be  a compact symmetric space.
  and $H\subsetneqq K\subsetneqq L$ be three closed  proper  subgroups of $ G$ such that $G$ acts effectively on the  coset space $G/H$. We denote by
$\mathfrak{h}$, $\mathfrak{k}$, $\mathfrak{l}$, $\mathfrak{g}$ the Lie algebras of $H$, $K$, $L$, $G$, respectively,  and
$B_{\mathfrak{h}}$, $B_{\mathfrak{k}}$, $B_{\mathfrak{l}}$, $B$ the negative of the Killing forms of $\mathfrak{h}$, $\mathfrak{k}$, $\mathfrak{l}$, $\mathfrak{g}$, respectively.
Then $\mathfrak{g}$ has a $B$-orthogonal decomposition
$$\mathfrak{g}=\mathfrak{l}+\mathfrak{p}=\mathfrak{k}+\mathfrak{u}+\mathfrak{p}=\mathfrak{h}+\mathfrak{n}+\mathfrak{u}+\mathfrak{p},$$
where $\mathfrak{n}$, $\mathfrak{u}$, $\mathfrak{p}$ are the subspaces of $\mathfrak{k}$, $\mathfrak{l}$ and $\mathfrak{g}$ respectively.
Denote  $\mathfrak{m}=\mathfrak{n}+\mathfrak{u}+\mathfrak{p}$. Then it is easily seen that
\begin{equation*}
[\mathfrak{h},\mathfrak{n}]\subset \mathfrak{n},\quad [\mathfrak{h}+\mathfrak{n},\mathfrak{u}]\subset \mathfrak{u},
\quad [\mathfrak{h}+\mathfrak{n}+\mathfrak{u},\mathfrak{p}]\subset \mathfrak{p}.
\end{equation*}

Let $\chi_{\mathfrak{h},\mathfrak{n}}$, $\chi_{\mathfrak{h},\mathfrak{u}}$, $\chi_{\mathfrak{h},\mathfrak{p}}$,
$\chi_{\mathfrak{k},\mathfrak{u}}$, $\chi_{\mathfrak{k},\mathfrak{p}}$, $\chi_{\mathfrak{l},\mathfrak{p}}$ be the adjoint representation of $\mathfrak{h}$ on
$\mathfrak{n}$, $\mathfrak{u}$, $\mathfrak{p}$,  $\mathfrak{k}$ on $\mathfrak{u}$, $\mathfrak{p}$
and $\mathfrak{l}$ on $\mathfrak{p}$, respectively,
and $C_{\mathfrak{h},\mathfrak{n}}$, $C_{\mathfrak{h},\mathfrak{u}}$, $C_{\mathfrak{h},\mathfrak{p}}$,
$C_{\mathfrak{k},\mathfrak{u}}$, $C_{\mathfrak{k},\mathfrak{p}}$, $C_{\mathfrak{l},\mathfrak{p}}$ the corresponding Casimir operators defined by
$$C_{\mathfrak{h},\mathfrak{n}}=-\sum(\mathrm{ad}(h_{i})\mathrm{ad}(h_{i}))|_{\mathfrak{n}},$$  $$C_{\mathfrak{h},\mathfrak{u}}=-\sum(\mathrm{ad}(h_{i})\mathrm{ad}(h_{i}))|_{\mathfrak{u}}, $$
$$C_{\mathfrak{h},\mathfrak{p}}=-\sum(\mathrm{ad}(h_{i})\mathrm{ad}(h_{i}))|_{\mathfrak{p}},$$
$$C_{\mathfrak{k},\mathfrak{u}}=-\sum(\mathrm{ad}(k_{i})\mathrm{ad}(k_{i}))|_{\mathfrak{u}},$$
$$C_{\mathfrak{k},\mathfrak{p}}=-\sum(\mathrm{ad}(k_{i})\mathrm{ad}(k_{i}))|_{\mathfrak{p}},$$
$$C_{\mathfrak{l},\mathfrak{p}}=-\sum(\mathrm{ad}(l_{i})\mathrm{ad}(l_{i}))|_{\mathfrak{p}},$$
 where $\{h_{i}\}$,   $\{k_{i}\}$ and  $\{l_{i}\}$ are B-orthonormal basis of $\mathfrak{h}$, $\mathfrak{k}$, $\mathfrak{l}$, respectively.

 Note that even if $G$ is simple, $L/H$ and $K/H$ need not be effective, so we denote by $\bar{L}/\bar{H_{1}}$ and $\bar{K}/\bar{H_{2}}$
 the corresponding (almost) effective quotient.
\begin{defn}\label{def3.1}
Let the notation be as above.  A quadruple  $(G,L,K,H)$ is called  a basic  quadruple if it satisfies the following conditions:
\begin{enumerate}
\item $G$ is compact and acting effectively on $G/H$;
\item There exist constants $c_{1}$, $c_{2}> 0$ such that
$B_{\bar{\mathfrak{l}}}=c_{1}B|_{\bar{\mathfrak{l}}}$,
$B_{\bar{\mathfrak{k}}}=c_{2}B|_{\bar{\mathfrak{k}}}$.
\item There exist constants $h_{\mathfrak{n}}$, $h_{\mathfrak{u}}$, $h_{\mathfrak{p}}$, $k_{\mathfrak{u}}$, $k_{\mathfrak{p}}$, $l_{\mathfrak{p}}$ such that
$C_{\mathfrak{h},\mathfrak{n}}=h_{\mathfrak{n}}\mathrm{id}$,
$C_{\mathfrak{h},\mathfrak{u}}=h_{\mathfrak{u}}\mathrm{id}$,
$C_{\mathfrak{h},\mathfrak{p}}=h_{\mathfrak{p}}\mathrm{id}$,
$C_{\mathfrak{k},\mathfrak{u}}=k_{\mathfrak{u}}\mathrm{id}$,
$C_{\mathfrak{k},\mathfrak{p}}=k_{\mathfrak{p}}\mathrm{id}$,
$C_{\mathfrak{l},\mathfrak{p}}=l_{\mathfrak{p}}\mathrm{id}$,
where $\mathrm{id}$ denotes the identity transformation.
\end{enumerate}
A basic quadruple $(G,L,K,H)$ is called standard if the standard metrics on $G/H$, $G/K$ and $G/L$ are Einstein.
\end{defn}

We first prove a simple but   useful lemma.
\begin{lem}\label{lem3.2}
Let $(G,L,K,H)$  be a basic  quadruple. If the standard metrics on $G/K$ and $G/H$ are both Einstein, then the constants $h_{\mathfrak{n}}$, $h_{\mathfrak{u}}$, $h_{\mathfrak{p}}$, $k_{\mathfrak{u}}$, $k_{\mathfrak{p}}$, $l_{\mathfrak{p}}$ are given by
\begin{equation}\label{equ3.4}
   h_{\mathfrak{n}}= h_{\mathfrak{u}}=h_{\mathfrak{p}}=\frac{1}{\dim G/H}\sum_{i}(1-\alpha_{i})\dim H_{i},
\end{equation}
\begin{equation}\label{equ3.5}
   k_{\mathfrak{u}}=k_{\mathfrak{p}}=\frac{1}{\dim G/K}\sum_{i}(1-\beta_{i})\dim K_{i},
\end{equation}
\begin{equation}\label{equ3.6}
  l_{\mathfrak{p}}=\frac{1}{\dim G/L}\sum_{i}(1-\gamma_{i})\dim L_{i},
\end{equation}
where $H_{i}$, $K_{i}$, $L_{i}$ are the simple factors of $H$, $K$ and $L$ respectively, and
$B_{\mathfrak{h}_{i}}=\alpha_{i}B|_{\mathfrak{h}_{i}}$, $B_{\mathfrak{k}_{i}}=\beta_{i}B|_{\mathfrak{k}_{i}}$,
$B_{\mathfrak{l}_{i}}=\gamma_{i}B|_{\mathfrak{l}_{i}}$.

Moreover, if there exists constants $c_{1}, c_{2}, c_{3}\in$ such that $B_{\mathfrak{l}}=c_{1}B|_{\mathfrak{l}}$,
$B_{\mathfrak{k}}=c_{2}B|_{\mathfrak{k}}$, and $B_{\mathfrak{h}}=c_{3}B|_{\mathfrak{h}}$, then we have
\begin{equation}\label{equ3.7}
  k_{\mathfrak{u}}=k_{\mathfrak{p}}=\frac{\dim K}{\dim L}l_{\mathfrak{p}},
\end{equation}
\begin{equation}\label{equ3.8}
  h_{\mathfrak{n}}=h_{\mathfrak{u}}=h_{\mathfrak{p}}=\frac{\dim H}{\dim L}l_{\mathfrak{p}},
\end{equation}
\begin{equation}\label{equ3.9}
  c_{1}=1-\frac{\dim G-\dim L}{\dim L}l_{\mathfrak{p}},
\end{equation}
\begin{equation}\label{equ3.10}
  c_{2}=1-\frac{\dim G-\dim K}{\dim L}l_{\mathfrak{p}}.
\end{equation}
\end{lem}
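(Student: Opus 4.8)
\emph{Plan of proof.} The engine behind all ten formulas is one elementary trace identity for Casimir operators, which I would establish first. If $\mathfrak{a}\subseteq\mathfrak{g}$ is any subalgebra with $B$-orthogonal complement $\mathfrak{q}$, then $\mathfrak{q}$ is $\mathrm{ad}(\mathfrak{a})$-invariant, $\mathrm{ad}_{\mathfrak{g}}(Z)$ preserves the splitting $\mathfrak{g}=\mathfrak{a}\oplus\mathfrak{q}$ for every $Z\in\mathfrak{a}$, and $\mathrm{ad}_{\mathfrak{g}}(Z)|_{\mathfrak{a}}=\mathrm{ad}_{\mathfrak{a}}(Z)$; hence $B(Z,Z)=-\mathrm{tr}(\mathrm{ad}_{\mathfrak{g}}(Z)^{2})=B_{\mathfrak{a}}(Z,Z)-\mathrm{tr}(\mathrm{ad}_{\mathfrak{g}}(Z)^{2}|_{\mathfrak{q}})$ for all $Z\in\mathfrak{a}$. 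Summing over a $B$-orthonormal basis of $\mathfrak{a}$ gives $\mathrm{tr}(C_{\mathfrak{a},\mathfrak{q}})=\dim\mathfrak{a}-\sum_i B_{\mathfrak{a}}(a_i,a_i)$, and after splitting $\mathfrak{a}$ into its center and simple ideals (using $B_{\mathfrak{a}}|_{\mathfrak{a}_j}=B_{\mathfrak{a}_j}$ for an ideal $\mathfrak{a}_j$) this reads $\mathrm{tr}(C_{\mathfrak{a},\mathfrak{q}})=\sum_i(1-\alpha_i)\dim\mathfrak{a}_i$, the center entering with $\alpha=0$. I would also record the relative form: for $\mathfrak{a}\subseteq\mathfrak{b}\subseteq\mathfrak{g}$ with $\mathfrak{p}_{\mathfrak{b}}$ the $B$-complement of $\mathfrak{b}$, one gets $\mathrm{tr}(\mathrm{ad}_{\mathfrak{g}}(Z)^{2}|_{\mathfrak{p}_{\mathfrak{b}}})=B_{\mathfrak{b}}(Z,Z)-B(Z,Z)$ for $Z\in\mathfrak{a}$, which is what I will feed up the chain $\mathfrak{h}\subset\mathfrak{k}\subset\mathfrak{l}\subset\mathfrak{g}$.

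For \eqref{equ3.4}--\eqref{equ3.6}: by the bracket relations recorded before Definition \ref{def3.1}, each of $\mathfrak{n},\mathfrak{u},\mathfrak{p}$ is $\mathrm{ad}(\mathfrak{h})$-invariant and $\mathfrak{u},\mathfrak{p}$ are $\mathrm{ad}(\mathfrak{k})$-invariant, so $C_{\mathfrak{h},\mathfrak{m}}$ restricts to $C_{\mathfrak{h},\mathfrak{n}},C_{\mathfrak{h},\mathfrak{u}},C_{\mathfrak{h},\mathfrak{p}}$ on the three summands, and $C_{\mathfrak{k},\mathfrak{u}+\mathfrak{p}}$ restricts to $C_{\mathfrak{k},\mathfrak{u}},C_{\mathfrak{k},\mathfrak{p}}$. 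Since the standard metric on $G/H$ is Einstein, the Wang--Ziller criterion gives $C_{\mathfrak{h},\mathfrak{m}}=a\,\mathrm{id}$; restriction forces $h_{\mathfrak{n}}=h_{\mathfrak{u}}=h_{\mathfrak{p}}=a$, and taking the trace with the identity above yields $a=\tfrac{1}{\dim G/H}\sum_i(1-\alpha_i)\dim H_i$, i.e. \eqref{equ3.4}. The identical argument on $G/K$ gives $k_{\mathfrak{u}}=k_{\mathfrak{p}}$ and \eqref{equ3.5}, while \eqref{equ3.6} follows at once from $C_{\mathfrak{l},\mathfrak{p}}=l_{\mathfrak{p}}\,\mathrm{id}$ (part (3) of a basic quadruple) and the trace identity.

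For the moreover part I would bring in $B_{\mathfrak{l}}=c_1B|_{\mathfrak{l}}$, $B_{\mathfrak{k}}=c_2B|_{\mathfrak{k}}$, $B_{\mathfrak{h}}=c_3B|_{\mathfrak{h}}$ and run the (relative) trace identity up the chain. The identity for $\mathfrak{l}\subset\mathfrak{g}$ gives $l_{\mathfrak{p}}\dim\mathfrak{p}=(1-c_1)\dim\mathfrak{l}$, which is \eqref{equ3.9}. The relative identity for $\mathfrak{k}\subset\mathfrak{l}\subset\mathfrak{g}$, summed over a $B$-orthonormal basis of $\mathfrak{k}$ and using $B_{\mathfrak{l}}(k_i,k_i)=c_1$, gives $k_{\mathfrak{p}}\dim\mathfrak{p}=(1-c_1)\dim\mathfrak{k}$; dividing by the previous relation kills $c_1$ and $\dim\mathfrak{p}$ and leaves $k_{\mathfrak{p}}=\tfrac{\dim K}{\dim L}l_{\mathfrak{p}}$, which with $k_{\mathfrak{u}}=k_{\mathfrak{p}}$ is \eqref{equ3.7}. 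Identically, $h_{\mathfrak{p}}\dim\mathfrak{p}=(1-c_1)\dim\mathfrak{h}$ gives $h_{\mathfrak{p}}=\tfrac{\dim H}{\dim L}l_{\mathfrak{p}}$, which with $h_{\mathfrak{n}}=h_{\mathfrak{u}}=h_{\mathfrak{p}}$ is \eqref{equ3.8}. Finally the absolute trace identity for $\mathfrak{k}\subset\mathfrak{g}$ reads $k_{\mathfrak{u}}\dim\mathfrak{u}+k_{\mathfrak{p}}\dim\mathfrak{p}=(1-c_2)\dim\mathfrak{k}$; using $k_{\mathfrak{u}}=k_{\mathfrak{p}}$, $\dim\mathfrak{u}+\dim\mathfrak{p}=\dim G-\dim K$, and \eqref{equ3.7}, this collapses to $c_2=1-\tfrac{\dim G-\dim K}{\dim L}l_{\mathfrak{p}}$, i.e. \eqref{equ3.10}.

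Everything after the trace identities is linear algebra. The two places where one must be attentive — and which I regard as the only real, and still mild, obstacles — are: first, checking that the relevant Casimir operators genuinely restrict to \emph{equal} scalars on each $\mathrm{ad}$-invariant piece, rather than merely being block-scalar, which is exactly where the bracket relations and the Wang--Ziller criterion are used; and second, keeping the centers of $\mathfrak{h},\mathfrak{k},\mathfrak{l}$ straight in the trace bookkeeping, that is, fixing the convention for $\alpha_i,\beta_i,\gamma_i$ and the dimensions $\dim H_i,\dim K_i,\dim L_i$ so that \eqref{equ3.4}--\eqref{equ3.6} come out as stated when a nontrivial center is present.
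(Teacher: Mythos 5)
Your proposal is correct and follows essentially the same route as the paper: the identities \eqref{equ3.4}--\eqref{equ3.6} come from tracing the Casimir operators (with the Wang--Ziller criterion forcing the three $\mathfrak{h}$-scalars, resp. two $\mathfrak{k}$-scalars, to coincide), and the ``moreover'' part comes from the relative trace relations $k_{\mathfrak{p}}\dim\mathfrak{p}=(1-c_{1})\dim\mathfrak{k}$, $h_{\mathfrak{p}}\dim\mathfrak{p}=(1-c_{1})\dim\mathfrak{h}$, which are exactly the facts the paper's proof quotes. Your explicit handling of the center (entering with $\alpha_i=0$) matches the convention used implicitly in the paper's Appendix A computations.
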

\begin{proof} First,
 \eqref{equ3.4}, \eqref{equ3.5} and \eqref{equ3.6} can be easily calculated by taking the trace of
$C_{\mathfrak{h},\mathfrak{n}}$, $C_{\mathfrak{k},\mathfrak{u}}$ and $C_{\mathfrak{l},\mathfrak{p}}$.
Then \eqref{equ3.7} and \eqref{equ3.8} follows from the facts that
\begin{equation*}
 k_{\mathfrak{p}}=\frac{\dim K}{\dim G/L}(1-c_{1}), \quad h_{\mathfrak{p}}=\frac{\dim H}{\dim G/L}(1-c_{1}).
\end{equation*}
Finally, \eqref{equ3.9}, \eqref{equ3.10} follows from \eqref{equ3.6} and \eqref{equ3.5}.
\end{proof}
 Note that for a general compact simple subgroup $H\subset G$, there always exists a constant $c$ such that $B_{\mathfrak{h}}=cB|_{\mathfrak{h}}$.  The method of computing $c$ is given in \cite{dz79}. In particular, if
 $\mathfrak{h}$ is a regular subalgebra of $\mathfrak{g}$ (see \cite{dynkin57}), then the constant $c$ is given by
 \begin{equation}\label{}
 c=\frac{B_{\mathfrak{h}}(\alpha_{m}',\alpha_{m}')}{ B(\alpha_{m},\alpha_{m})},
\end{equation}
where $\alpha_{m}'$, $\alpha_{m}$ are the maximal root of $\mathfrak{h}$ and $\mathfrak{g}$, respectively.

We must mention that, in this paper, most  subalgebras are regular.
Note also that  the values of $B(\alpha_{m},\alpha_{m})$  for compact simple Lie groups have been  given in Table 3 of  \cite{dz79}. For convenience, we  summarize some of the results as the following table.

\pagebreak
\begin{center}
Table C
\end{center}
\begin{center}
\begin{tabular}{|c|c|c|}
  \hline
  % after \\: \hline or \cline{col1-col2} \cline{col3-col4} ...
  $\mathfrak{g}$ & $B(\alpha_{m},\alpha_{m})$ & dim $\mathfrak{g}$  \\
\hline
  $\mathfrak{su}(n)$ & $4n$  & $n^{2}-1$ \\
   $\mathfrak{so}(n)(n\geq 4)$& $4(n-2)$ & $\frac{1}{2}n(n-1)$ \\
   $\mathfrak{sp}(n)$& $4(n+1)$ & $2n^{2}+n$ \\
   $\mathfrak{g}_{2}$& 16 & 14 \\
  $\mathfrak{f}_{4}$ & 36 & 52 \\
  $\mathfrak{e}_{6}$ & 48 & 78 \\
  $\mathfrak{e}_{7}$ & 72 & 133 \\
  $\mathfrak{e}_{8}$ & 120 & 248 \\
  \hline
\end{tabular}
\end{center}

In the case that $H$ is semisimple,  the following results will be  useful.

\begin{prop}[\cite{dz79}]\label{c1}
Let $G/H$ be a strongly isotropy irreducible space with $H$  not simple. If there exists a constant $c$ such that
$B_{\mathfrak{h}}=cB|_{\mathfrak{h}}$, then the Lie algebra pair $(\mathfrak{g}, \mathfrak{h})$
must be one of the following six cases:
\begin{gather*}
  \mathfrak{so}(k)\oplus \mathfrak{so}(k)\subset \mathfrak{so}(2k), \quad
  \mathfrak{sp}(k)\oplus \mathfrak{sp}(k)\subset \mathfrak{sp}(2k), \\
  \mathfrak{su}(k)\oplus \mathfrak{su}(k)\subset \mathfrak{su}(k^{2}), \quad
  \mathfrak{su}(3)\oplus \mathfrak{su}(3) \oplus \mathfrak{su}(3)\subset \mathfrak{e}_{6},\\
  \mathfrak{sp}(3)\oplus \mathfrak{g}_{2}\subset \mathfrak{e}_{7},\quad
\mathfrak{sp}(1)\oplus \mathfrak{so}(4)\subset \mathfrak{sp}(4).
\end{gather*}
\end{prop}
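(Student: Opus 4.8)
The plan is to translate the hypothesis into an arithmetic condition on the way each simple ideal of $\mathfrak{h}$ sits inside $\mathfrak{g}$, and then to scan the (finite) classification of strongly isotropy irreducible spaces. \textbf{Step 1: $\mathfrak{h}$ is semisimple and the condition splits.} Since the Killing form of any Lie algebra vanishes on its center while $B$ is negative definite, an identity $B_{\mathfrak{h}}=cB|_{\mathfrak{h}}$ with $c\neq 0$ forces $\mathfrak{h}$ to have trivial center; thus $\mathfrak{h}=\mathfrak{h}_{1}\oplus\cdots\oplus\mathfrak{h}_{p}$ is a sum of compact simple ideals, with $p\geq 2$ since $H$ is not simple. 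As $\mathds{R}$-representations of $\mathfrak{h}_{i}$, the ideal $\mathfrak{h}_{i}$ carries the adjoint representation whereas $\mathfrak{h}_{j}$ ($j\neq i$) is trivial, so distinct $\mathfrak{h}_{i},\mathfrak{h}_{j}$ are mutually $B$-orthogonal; since also $B_{\mathfrak{h}}=\bigoplus_{i}B_{\mathfrak{h}_{i}}$, the hypothesis is \emph{equivalent} to the existence of one constant $c$ with $B_{\mathfrak{h}_{i}}=cB|_{\mathfrak{h}_{i}}$ for every $i$. Writing $B_{\mathfrak{h}_{i}}=\alpha_{i}B|_{\mathfrak{h}_{i}}$ — a ratio completely determined by the embedding $\mathfrak{h}_{i}\hookrightarrow\mathfrak{g}$, computable from its Dynkin index together with the dual Coxeter numbers of $\mathfrak{h}_{i}$ and $\mathfrak{g}$, and given for regular $\mathfrak{h}_{i}$ by the formula recalled above using Table C — the problem becomes: classify those strongly isotropy irreducible $G/H$ with $H$ non-simple for which $\alpha_{1}=\alpha_{2}=\cdots=\alpha_{p}$.

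\textbf{Step 2: invoke the classification and test each entry.} I would now appeal to Wolf's classification of strongly isotropy irreducible homogeneous spaces \cite{wolf68} (with the later corrections), together with the list of irreducible symmetric spaces, restricted to the finitely many pairs $(\mathfrak{g},\mathfrak{h})$ in which $\mathfrak{h}$ is semisimple but not simple. For each such pair I would compute the $\alpha_{i}$: for the classical families the embeddings are explicit block or tensor-product maps, so the $\alpha_{i}$ are read off the defining representation and Table C (e.g. for block-diagonal $\mathfrak{so}(k_{1})\oplus\mathfrak{so}(k_{2})\subset\mathfrak{so}(k_{1}+k_{2})$ one finds $\alpha_{i}$ proportional to $k_{i}-2$, forcing $k_{1}=k_{2}$; for $\mathfrak{su}(k)\oplus\mathfrak{su}(k)\subset\mathfrak{su}(k^{2})$ the tensor-product embedding gives both factors the same $\alpha_{i}$), while for the exceptional pairs one uses Dynkin's index tables \cite{dynkin57}. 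Discarding every pair with unequal $\alpha_{i}$ leaves exactly the six pairs in the statement, and for each of them equality is then checked directly: for $\mathfrak{so}(k)\oplus\mathfrak{so}(k)\subset\mathfrak{so}(2k)$, $\mathfrak{sp}(k)\oplus\mathfrak{sp}(k)\subset\mathfrak{sp}(2k)$ and $\mathfrak{su}(k)\oplus\mathfrak{su}(k)\subset\mathfrak{su}(k^{2})$ the two simple factors are interchanged by an automorphism of $\mathfrak{g}$, so $\alpha_{1}=\alpha_{2}$ automatically, and the three exceptional pairs $\mathfrak{su}(3)\oplus\mathfrak{su}(3)\oplus\mathfrak{su}(3)\subset\mathfrak{e}_{6}$, $\mathfrak{sp}(3)\oplus\mathfrak{g}_{2}\subset\mathfrak{e}_{7}$ and $\mathfrak{sp}(1)\oplus\mathfrak{so}(4)\subset\mathfrak{sp}(4)$ (where $\mathfrak{so}(4)$ splits into two further simple ideals) are verified from the index data.

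\textbf{The main obstacle.} The conceptual part — Step 1 and the reduction to ``all simple factors of $\mathfrak{h}$ embed in $\mathfrak{g}$ with the same ratio $\alpha_{i}$'' — is short and clean. The real work, and the only genuine difficulty, is the disciplined case-by-case pass through Wolf's list with correct bookkeeping of Dynkin indices, and having accurate index values for the exceptional embeddings at hand: a single miscomputed $\alpha_{i}$ would either introduce a spurious pair or omit one of the six, so the verification, though finite and elementary, must be carried out with care.
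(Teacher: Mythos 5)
You should first note that the paper itself offers no proof of this proposition: it is imported verbatim from D'Atri--Ziller \cite{dz79}, and your overall strategy --- reduce to the equality of the embedding constants $\alpha_i$ of the simple ideals and then scan the classification of strongly isotropy irreducible spaces --- is exactly the method of that source. Your Step 1 is sound and correctly carried out: an equality $B_{\mathfrak{h}}=cB|_{\mathfrak{h}}$ with $c\neq 0$ kills the center because $B_{\mathfrak{h}}$ vanishes there while $B|_{\mathfrak{h}}$ is definite; distinct simple ideals are $B$-orthogonal by $\mathrm{ad}(\mathfrak{h})$-invariance of $B$ together with $\mathfrak{h}_i=[\mathfrak{h}_i,\mathfrak{h}_i]$; and since $B_{\mathfrak{h}}=\bigoplus_i B_{\mathfrak{h}_i}$, the hypothesis is indeed equivalent to $\alpha_1=\cdots=\alpha_p$. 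The sample computations you quote are also correct (for instance, block-diagonal $\mathfrak{so}(k_1)\oplus\mathfrak{so}(k_2)\subset\mathfrak{so}(k_1+k_2)$ gives $\alpha_i=(k_i-2)/(k_1+k_2-2)$, and the tensor-product embedding $\mathfrak{su}(p)\oplus\mathfrak{su}(q)\subset\mathfrak{su}(pq)$ gives $\alpha_1=1/q^2$, $\alpha_2=1/p^2$, equal precisely when $p=q$).

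The genuine gap is that Step 2 is announced rather than executed, and Step 2 \emph{is} the proposition. The assertion ``discarding every pair with unequal $\alpha_i$ leaves exactly the six pairs in the statement'' is the conclusion to be proved, not a step of the proof; nothing in your write-up certifies that no seventh pair survives. To close the argument you must actually enumerate the pairs $(\mathfrak{g},\mathfrak{h})$ with $\mathfrak{h}$ semisimple non-simple occurring in the list of irreducible symmetric spaces and in Wolf's list \cite{wolf68} of non-symmetric strongly isotropy irreducible spaces --- the block-diagonal families $\mathfrak{so}(k_1)\oplus\mathfrak{so}(k_2)\subset\mathfrak{so}(k_1+k_2)$ and $\mathfrak{sp}(k_1)\oplus\mathfrak{sp}(k_2)\subset\mathfrak{sp}(k_1+k_2)$, the tensor-product families $\mathfrak{su}(p)\oplus\mathfrak{su}(q)\subset\mathfrak{su}(pq)$, $\mathfrak{so}(p)\oplus\mathfrak{so}(q)\subset\mathfrak{so}(pq)$, $\mathfrak{sp}(p)\oplus\mathfrak{so}(q)\subset\mathfrak{sp}(pq)$, and the finitely many exceptional isotropy subalgebras --- and compute every $\alpha_i$ from the Dynkin indices, remembering to split factors such as $\mathfrak{so}(4)$ further into simple ideals (as you correctly do for $\mathfrak{sp}(1)\oplus\mathfrak{so}(4)\subset\mathfrak{sp}(4)$). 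As you yourself observe, a single miscomputed index would alter the final list; until that finite verification is written down, what you have is a correct and well-organized plan, not a proof.
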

\begin{thm}[\cite{wz85}]\label{c2}
Let $G$ be a compact connected  simple Lie group and $H$ a semi-simple subgroup such that $G/H$
is standard homogeneous Einstein but not strongly isotropy irreducible. Then
there exists a constant $c$ such that $B_{\mathfrak{h}}=cB|_{\mathfrak{h}}$ except for
the following two cases:
$$\mathfrak{sp}(1)\oplus \mathfrak{sp}(5)\oplus \mathfrak{so}(6)\subset \mathfrak{so}(26),\quad
\mathfrak{so}(8)\oplus 3\mathfrak{su}(2)\subset \mathfrak{e}_{7}.$$
\end{thm}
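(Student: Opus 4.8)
To prove this I would combine M. Wang and W. Ziller's classification of standard homogeneous Einstein manifolds $G/H$ with $G$ compact simple with a case-by-case computation of embedding indices, carried out in three stages. \emph{First, reduce to a numerical criterion.} Since $H$ is semisimple, write $\mathfrak{h}=\mathfrak{h}_{1}\oplus\cdots\oplus\mathfrak{h}_{p}$ as a sum of simple ideals. The Killing form of a direct sum of ideals is the orthogonal sum of the Killing forms, and, since $B$ is invariant, $[\mathfrak{h}_{i},\mathfrak{h}_{j}]=0$ for $i\neq j$ and $[\mathfrak{h}_{i},\mathfrak{h}_{i}]=\mathfrak{h}_{i}$, the restriction $B|_{\mathfrak{h}}$ is likewise block-diagonal for this decomposition. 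Hence for each $i$ there is a positive constant $\alpha_{i}$ with $B_{\mathfrak{h}_{i}}=\alpha_{i}\,B|_{\mathfrak{h}_{i}}$, and there is a constant $c$ with $B_{\mathfrak{h}}=cB|_{\mathfrak{h}}$ if and only if $\alpha_{1}=\cdots=\alpha_{p}$. So the statement is equivalent to: for every pair $(\mathfrak{g},\mathfrak{h})$ in the Wang--Ziller list with $\mathfrak{g}$ simple, $\mathfrak{h}$ semisimple and $G/H$ not strongly isotropy irreducible, the ratios of the simple factors of $\mathfrak{h}$ coincide, with exactly two exceptions.

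\emph{Second, set up the computation of the $\alpha_{i}$.} Each $\alpha_{i}$ is the Dynkin index of the embedding $\mathfrak{h}_{i}\hookrightarrow\mathfrak{g}$ and can be computed by the method of \cite{dz79,dynkin57}, e.g.\ from the branching of one fixed representation of $\mathfrak{g}$. In the many cases where $\mathfrak{h}_{i}$ is a regular subalgebra one uses instead the closed formula $\alpha_{i}=B_{\mathfrak{h}_{i}}(\alpha_{m}',\alpha_{m}')/B(\alpha_{m},\alpha_{m})$ recalled above in terms of highest roots, together with the numbers $B(\alpha_{m},\alpha_{m})$ collected in Table C; the few non-regular embeddings, all of which occur inside the exceptional Lie algebras, are treated individually.

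\emph{Third, run through the list.} Using Wolf's classification \cite{wolf68} of strongly isotropy irreducible spaces, discard the pairs that are strongly isotropy irreducible (those with $H$ not simple being governed by Proposition \ref{c1} instead) as well as the pairs with $H$ simple (where there is nothing to check); what remains is a finite collection of pairs, essentially those appearing in the relevant tables of \cite{wz85}. Those in which the simple factors of $\mathfrak{h}$ are permuted by an automorphism of $\mathfrak{g}$ (e.g.\ $n\mathfrak{su}(2)$, $2\mathfrak{so}(8)\subset\mathfrak{e}_{8}$, and the like) have all $\alpha_{i}$ equal automatically; for the short remaining list of pairs with factors of distinct types, a direct index computation shows the $\alpha_{i}$ still coincide except for $\mathfrak{sp}(1)\oplus\mathfrak{sp}(5)\oplus\mathfrak{so}(6)\subset\mathfrak{so}(26)$ and $\mathfrak{so}(8)\oplus 3\mathfrak{su}(2)\subset\mathfrak{e}_{7}$, where one exhibits explicitly that the index of, say, the $\mathfrak{sp}(1)$-factor (respectively the $\mathfrak{so}(8)$-factor) differs from those of the other factors — for instance, in $\mathfrak{so}(26)$ one gets $\alpha_{\mathfrak{sp}(5)}=\tfrac14\neq\tfrac16=\alpha_{\mathfrak{so}(6)}$.

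The main obstacle is this third stage: one needs the full Wang--Ziller classification in hand, one must correctly decide strong isotropy irreducibility in each case in order to isolate exactly the pairs in the scope of the theorem, and one must carry out the branching computations for the non-regular subalgebras of $\mathfrak{e}_{6},\mathfrak{e}_{7},\mathfrak{e}_{8}$, which are the delicate part. It is worth emphasizing that the Einstein hypothesis enters only by confining $(\mathfrak{g},\mathfrak{h})$ to the classified list, and that the two exceptions — both standard Einstein yet with mismatched indices — show that no purely formal argument can replace the case analysis.
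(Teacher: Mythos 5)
The paper does not actually prove this statement: it is quoted verbatim as a theorem of Wang--Ziller \cite{wz85}, so there is no internal proof to compare against. Judged against the argument in \cite{wz85} itself, your outline is essentially the right one: the reduction in your first stage is correct (invariance of $B$ plus $[\mathfrak{h}_i,\mathfrak{h}_i]=\mathfrak{h}_i$ and $[\mathfrak{h}_i,\mathfrak{h}_j]=0$ force $B|_{\mathfrak{h}}$ to be block-diagonal and proportional to $B_{\mathfrak{h}_i}$ on each simple ideal, so the question is exactly whether the constants $\alpha_i$ coincide), and the verification is indeed a finite case check through the Wang--Ziller tables; your spot-check $\alpha_{\mathfrak{sp}(5)}=\tfrac14\neq\tfrac16=\alpha_{\mathfrak{so}(6)}$ inside $\mathfrak{so}(26)$ is numerically correct. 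Two inaccuracies are worth fixing. First, $\alpha_i$ is not literally the Dynkin index of $\mathfrak{h}_i\hookrightarrow\mathfrak{g}$; it is the ratio $B_{\mathfrak{h}_i}/B|_{\mathfrak{h}_i}$, which is determined by the Dynkin index together with the dual Coxeter numbers (your regular-subalgebra formula is the correct specialization, so this is only a terminological slip). Second, your claim that the non-regular embeddings ``all occur inside the exceptional Lie algebras'' is false and is contradicted by your own example: $\mathfrak{sp}(1)\oplus\mathfrak{sp}(5)\subset\mathfrak{so}(20)\subset\mathfrak{so}(26)$ is a non-regular (tensor-product/isotropy-representation) embedding in a classical algebra, and most of the relevant pairs arise as $\pi(H)\subset\mathfrak{so}(\dim G/H)$ for $\pi$ an isotropy representation, hence are non-regular. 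Finally, be aware that the proposal is a schema rather than a proof: the entire content of the theorem lives in the third stage, which you defer; that is unavoidable for a classification statement, but it means the proposal establishes the strategy, not the result.
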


\textbf{Proof of Theorem \ref{main1}}
\quad Let $(G,L,K,H)$ be a basic quadruple with $G$ compact simple, such that the standard metrics on $G/L$, $G/K$, $G/H$ are Einstein. Then $K/H \rightarrow G/H \rightarrow G/K$ is one of the fibrations listed in Table XI of \cite{wz85}.
Combining Table IA and Table XI of \cite{wz85}, we can find out all the  subgroups $L$ of $G$ which contains  $K $ such that the standard metric on $G/L$ is Einstein. Applying Proposition \ref{c1}
and Theorem \ref{c2}, we can determine all the ones such that there exists a constant $c$ with
$B_{\bar{\mathfrak{l}}}=cB|_{\bar{\mathfrak{l}}}$   among the above subgroups . The result is listed  in  Table A.

On the other hand,  if $H$=\{e\}, then $L/K \rightarrow G/K \rightarrow G/L$ is also a fibration of Einstein metrics  listed in Table XI of \cite{wz85}. According to Definition \ref{def3.1}, we only need to find out all the  subgroups $L$ such that  $K\subset L$ and there exists  a constant
$c_1>0$ with  $B_{\mathfrak{l}}=c_{1}B|_{\mathfrak{l}}$. Combining this with Proposition \ref{c1} and Theorem \ref{c2}, we get  Table B. This completes the proof of Theorem \ref{main1}.

\medskip
There are two types of the basic quadruples which need some more interpretation, namely,
\begin{equation*}
 \textbf{Type A. 3}  \quad \mathfrak{so}(n_{1}n_{2}k)\supset n_{1}\mathfrak{so}(n_{2}k)
  \supset  n_{1}n_{2}\mathfrak{so}(k)\supset \oplus_{i=1}^{l}\mathfrak{h}_{i},  \quad k\geq3, n_{i}\geq2,
\end{equation*}
and
\begin{equation*}
  \textbf{Type B. 2} \quad \mathfrak{so}(nk)\supset n\mathfrak{so}(k)
  \supset \oplus_{i=1}^{l}\mathfrak{h}_{i}\supset \{e\}, \quad k\geq3, n\geq2.
\end{equation*}
These two types of basic quadruples are constructed through the following observation.

Let $G_{i}/H_{i}, i=1,\ldots,l$ $(l\geq 2)$ be a family of irreducible symmetric spaces such that either $H_{i}$ is simple or
$G_{i}/H_{i}$ is one   of the types
$\mathrm{SO}(2k)/\mathrm{SO}(k)\times \mathrm{SO}(k)$ and $\mathrm{Sp}(2k)/\mathrm{Sp}(k)\times \mathrm{Sp}(k)$.
Then $G/H=G_{1}/H_{1}\times \cdots \times G_{s}/H_{s}$ is also a  symmetric space.
Let $\pi$  be the isotropy representation of $G/H$. Then it has been shown in \cite{wz85} that
$\mathrm{SO}(\dim G/H)/\pi(H)$ is a standard homogeneous Einstein manifold if and only if
$\frac{\dim G_{i}}{\dim H_{i}}$ is independent
of $i$.  In particular, if the standard metric on $\mathrm{SO}(\dim G/H)/\pi(H)$ is Einstein, then by Theorem \ref{c2}, there exists a constant $c$ such that
$B_{\mathfrak{h}}=cB|_{\mathfrak{h}}$.
Now in the above two types,
$\oplus_{i=1}^{l}\mathfrak{h}_{i} \subset n\mathfrak{so}(k)\subset \mathfrak{so}(nk)$ can  be expressed
as
\begin{equation}\label{}
  \bigoplus_{i}^{l}\mathfrak{h}_{i}=\bigoplus_{s=0}^{n-1}\bigoplus_{i=t_{s}+1}^{t_{s+1}}\mathfrak{h}_{i}
  \subset n\mathfrak{so}(k), \quad  0=t_{0}<t_{1}<\cdots <t_{n}=l,
\end{equation}
where  $\oplus_{i=t_{s}+1}^{t_{s+1}}\mathfrak{h}_{i}\subset \mathfrak{so}(k)$ and
 $\mathrm{SO}(nk)/\oplus_{i}^{l}H_{i}$,
$\mathrm{SO}(k)/\oplus_{i=t_{s}+1}^{t_{s+1}}H_{i}$ $(s=0, 1, \ldots, n-1)$ are standard homogeneous Einstein manifolds.
Moreover, it is easy to check that
   \begin{equation}\label{}
   \dim H_{i}\leq \dim \mathrm{SO}(\dim G_{i}/H_{i}), \forall 1\leq i\leq l.
   \end{equation}
  Then it follows that
\begin{equation}\label{}
  \dim \oplus_{i=1}^{l}\mathfrak{h}_{i}<\frac{1}{2}\dim n\mathfrak{so}(k).
\end{equation}
This assertion will be useful in the following sections.

\section{Ricci curvature of the invariant metrics}
As  in Section 1, given a   basic  quadruple $(G,L,K,H)$,
we consider $G$-invariant metrics of the form
\begin{equation*}
  \langle , \rangle=g_{(x,y)}=B|_{\mathfrak{n}}+xB|_{\mathfrak{u}}+yB|_{\mathfrak{p}}, \quad x,y\in \mathds{R}^{+},
\end{equation*}
on the homogeneous space $G/H$.
In this section, we mainly study the condition for  $g_{(x,y)}$ to be Einstein.

First, we have
\begin{lem}
Let $(G,L,K,\{e\})$ be a basic quadruple with $G$ simple, then
the left invariant metric $g_{(x,y)}$ on $G$ is naturally reductive with respect to $G\times N$ for some closed subgroup $N$ of $G$, if and only if at least one of the following holds:

 \rm(1) \,\, $x=y$.

 \rm(2) \,\, $x=1$.

 \rm(3) \,\, $\mathfrak{k}$ is an ideal in $\mathfrak{l}$.
\end{lem}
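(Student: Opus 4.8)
The plan is to apply the D'Atri--Ziller classification of naturally reductive left invariant metrics on compact simple Lie groups, namely Theorem \ref{nat-thm}, and reduce the question to a statement about when the metric $g_{(x,y)}$ can be put in the standard form \eqref{naturally reductive} with respect to some subgroup $N$. Since $G$ is compact simple and $H=\{e\}$, the metric $g_{(x,y)}=B|_{\mathfrak{n}}+xB|_{\mathfrak{u}}+yB|_{\mathfrak{p}}$ is a left invariant metric on $G$, where $\mathfrak{g}=\mathfrak{n}+\mathfrak{u}+\mathfrak{p}=\mathfrak{k}+\mathfrak{u}+\mathfrak{p}=\mathfrak{l}+\mathfrak{p}$, with $\mathfrak{k}=\mathfrak{n}$ (since $\mathfrak{h}=0$) and $\mathfrak{l}=\mathfrak{k}+\mathfrak{u}=\mathfrak{n}+\mathfrak{u}$. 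By Theorem \ref{nat-thm}, $g_{(x,y)}$ is naturally reductive with respect to $G\times N$ for some closed subgroup $N\subseteq G$ if and only if there is a closed subgroup $N$ with reductive complement $\mathfrak{m}_N$ (the $B$-orthocomplement of $\mathfrak{n}_N=\mathrm{Lie}(N)$) and a decomposition of $\mathfrak{n}_N$ into its center plus simple ideals, such that $g_{(x,y)}$ equals $x_0 B|_{\mathfrak{m}_N}+A_0|_{(\mathfrak{n}_N)_0}+\sum u_i B|_{(\mathfrak{n}_N)_i}$.

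First I would check sufficiency of the three listed conditions. If $x=y$, then $g_{(x,x)}=B|_{\mathfrak{n}}+xB|_{\mathfrak{m}}$ where $\mathfrak{m}=\mathfrak{u}+\mathfrak{p}$ is the $B$-orthocomplement of $\mathfrak{k}$; since $B_{\bar{\mathfrak{k}}}=c_2 B|_{\bar{\mathfrak{k}}}$ (condition (2) of Definition \ref{def3.1}), after splitting $\mathfrak{k}$ into center and simple ideals the metric $B|_{\mathfrak{n}}$ is a positive multiple of the negative Killing form on each simple ideal of $\mathfrak{k}$ and an arbitrary (here: the restriction of $B$) metric on the center, so $g_{(x,x)}$ has the form \eqref{naturally reductive} with $N=K$, hence is naturally reductive with respect to $G\times K$. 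If $x=1$, then $g_{(1,y)}=B|_{\mathfrak{n}+\mathfrak{u}}+yB|_{\mathfrak{p}}=B|_{\mathfrak{l}}+yB|_{\mathfrak{p}}$, and since $B_{\bar{\mathfrak{l}}}=c_1 B|_{\bar{\mathfrak{l}}}$ the same argument with $N=L$ applies. If $\mathfrak{k}$ is an ideal in $\mathfrak{l}$, then $\mathfrak{u}$ is also an ideal in $\mathfrak{l}$ (the $B$-orthocomplement of an ideal in a compact Lie algebra is an ideal), so $\mathfrak{l}=\mathfrak{k}\oplus\mathfrak{u}$ as Lie algebras, and $\mathfrak{u}$ breaks into simple ideals of $\mathfrak{l}$ (and possibly a central part); together with the splitting of $\mathfrak{k}$ coming from condition (2), one sees that $g_{(x,y)}=B|_{\mathfrak{n}}+xB|_{\mathfrak{u}}+yB|_{\mathfrak{p}}$ has the form \eqref{naturally reductive} with $N=L$, treating $\mathfrak{u}$ as a collection of ideals $\mathfrak{h}_i$ of $\mathfrak{l}$ with their own coefficients. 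So in each case $g_{(x,y)}$ is naturally reductive.

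For necessity I would argue by contradiction: assume none of (1), (2), (3) holds and that $g_{(x,y)}$ is naturally reductive with respect to $G\times N$. By the second half of Theorem \ref{nat-thm}, $g_{(x,y)}$ must be of the form \eqref{naturally reductive} for the subgroup $N$, i.e.\ there is an $\mathrm{Ad}(N)$-invariant $B$-orthogonal splitting $\mathfrak{g}=\mathfrak{n}_N\oplus\mathfrak{m}_N$ on which $g_{(x,y)}$ is $x_0 B|_{\mathfrak{m}_N}$ plus Killing-proportional pieces on the simple ideals of $\mathfrak{n}_N$ plus an arbitrary inner product on the center of $\mathfrak{n}_N$. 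The key observation is that the eigenspace decomposition of $g_{(x,y)}$ relative to $B$ is exactly $\mathfrak{g}=\mathfrak{n}\oplus\mathfrak{u}\oplus\mathfrak{p}$ with distinct eigenvalues $1,x,y$ (using $x\neq 1$, $x\neq y$; the case $y=1$ requires a small separate remark but can be folded in, or handled by noting it is covered by examining $\mathfrak{n}\oplus\mathfrak{p}$), so any subgroup $N$ realizing the naturally reductive structure must have $\mathfrak{m}_N$, and each Killing-proportional ideal, adapted to this decomposition. Analyzing the possibilities — $\mathfrak{m}_N$ is a sum of some of $\mathfrak{n},\mathfrak{u},\mathfrak{p}$ with the requirement that $g_{(x,y)}|_{\mathfrak{m}_N}$ is a single multiple of $B$ — forces $\mathfrak{m}_N\subseteq\mathfrak{p}$ (if $x\neq y$ and $x\neq 1$, no two of the three blocks can be lumped into $\mathfrak{m}_N$) hence $\mathfrak{n}+\mathfrak{u}\subseteq\mathfrak{n}_N$; but then $\mathfrak{n}=\mathfrak{k}$ and $\mathfrak{u}$ must each be unions of ideals of $\mathfrak{n}_N$ (or lie in its center), which makes $\mathfrak{u}$ an ideal of $\mathfrak{n}_N$; intersecting with $\mathfrak{l}=\mathfrak{k}+\mathfrak{u}$ one deduces $[\mathfrak{k},\mathfrak{u}]\subseteq\mathfrak{u}$ is in fact zero on the relevant part, contradicting that $\mathfrak{k}$ is not an ideal in $\mathfrak{l}$. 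The main obstacle I anticipate is exactly this last combinatorial case analysis: carefully ruling out all configurations of $N$ (especially those where $\mathfrak{n}_N$ has a large center, so that the "arbitrary metric $A_0$" clause gives extra freedom) and correctly handling the boundary subcase $y=1$; this is where one must use the structural facts about the quadruple — that $\mathfrak{l}/\mathfrak{k}$ is (almost) effective and that $\mathfrak{k}$ is properly and non-ideally embedded — rather than just the eigenvalue bookkeeping.
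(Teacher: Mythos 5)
Your proposal follows the same route as the paper: the paper's entire proof is the single sentence ``It follows from Theorem \ref{nat-thm} and the fact that $\mathfrak{k}$ and $\mathfrak{l}$ are subalgebras of $\mathfrak{g}$,'' and your sufficiency argument is exactly that remark made precise. Your treatment of (1), (2), (3) is correct and complete; note only that you do not actually need $B_{\bar{\mathfrak{k}}}=c_2B|_{\bar{\mathfrak{k}}}$ there, since the form \eqref{naturally reductive} asks for multiples of $B|_{\mathfrak{h}_i}$ (the restriction of the Killing form of $\mathfrak{g}$), and $B|_{\mathfrak{k}}$ restricted to a simple ideal is already $1\cdot B$.

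The necessity half, which the paper does not spell out at all, is where your sketch has a concrete gap. Writing $g_{(x,y)}=B(A\cdot,\cdot)$, the operator $A$ has eigenspaces $\mathfrak{k},\mathfrak{u},\mathfrak{p}$ with eigenvalues $1,x,y$, and the form \eqref{naturally reductive} forces $\mathfrak{m}_N$ to lie in a single eigenspace and every simple ideal of $\mathfrak{n}_N$ likewise (the center being $A$-invariant). But your assertion that this ``forces $\mathfrak{m}_N\subseteq\mathfrak{p}$'' is not true as stated: when $1,x,y$ are pairwise distinct, $\mathfrak{m}_N$ could a priori sit inside $\mathfrak{k}$ or inside $\mathfrak{u}$, and these configurations are not excluded by the hypothesis that $\mathfrak{k}$ is not an ideal of $\mathfrak{l}$ — they are excluded by simplicity of $\mathfrak{g}$. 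Concretely, if $\mathfrak{m}_N\subseteq\mathfrak{k}$ or $\mathfrak{m}_N\subseteq\mathfrak{u}$ then $\mathfrak{p}\subseteq\mathfrak{n}_N$, so $\mathfrak{p}=\mathfrak{n}_N\cap\mathfrak{p}$ is a sum of simple ideals of $\mathfrak{n}_N$ and a central piece, hence an ideal of $\mathfrak{n}_N$ and in particular a subalgebra; combined with $[\mathfrak{l},\mathfrak{p}]\subseteq\mathfrak{p}$ this makes $\mathfrak{p}$ a nonzero proper ideal of $\mathfrak{g}$, a contradiction. Only in the remaining case $\mathfrak{m}_N\subseteq\mathfrak{p}$ does your intended argument ($\mathfrak{l}\subseteq\mathfrak{n}_N$, so $\mathfrak{k}=\mathfrak{n}_N\cap\mathfrak{k}$ is an ideal of $\mathfrak{n}_N$, hence of $\mathfrak{l}$) apply. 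The subcase $y=1$, which you defer, also needs its own two-line treatment: the eigenspaces are then $\mathfrak{k}\oplus\mathfrak{p}$ and $\mathfrak{u}$; if $\mathfrak{m}_N\subseteq\mathfrak{k}\oplus\mathfrak{p}$ then $\mathfrak{u}$ becomes an ideal of $\mathfrak{n}_N$, hence a subalgebra, hence (using $[\mathfrak{k},\mathfrak{u}]\subseteq\mathfrak{u}$) an ideal of $\mathfrak{l}$, so its orthocomplement $\mathfrak{k}$ is an ideal of $\mathfrak{l}$, a contradiction; if $\mathfrak{m}_N\subseteq\mathfrak{u}$ then $\mathfrak{k}\oplus\mathfrak{p}$ is an ideal of $\mathfrak{n}_N$, hence a subalgebra, and then $\mathfrak{p}+[\mathfrak{p},\mathfrak{p}]$ is a nonzero proper ideal of $\mathfrak{g}$, again a contradiction. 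So the strategy is sound and all cases close, but the case analysis you flag as ``the main obstacle'' is genuinely needed and is not carried out in the proposal.
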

\begin{proof}
 It follows from Theorem \ref{nat-thm}  and the fact that  $\mathfrak{k}$ and $\mathfrak{l}$ are subalgebras of $\mathfrak{g}$.
\end{proof}
The following result is obvious, so we omit the proof.
\begin{prop}
Let $(G,L,K,H)$ and $(G',L',K',H')$ be two basic quadruples, and $g_{(x,y)}$, $g'_{(x',y')}$ be  two invariant metrics on $G/H$ and $G'/H'$ defined as above, respectively.
Then $(G/H,g_{(x,y)})$ is isometric to $(G'/H',g'_{(x',y')})$ if and only if there exists an isomorphism $\varphi: G\rightarrow G'$, such that
$\varphi(L)=L'$,
$\varphi(K)=K'$, $\varphi(H)=H'$, and $x=x'$, $y=y'$.
\end{prop}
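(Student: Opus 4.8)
The plan is to establish the nontrivial (``only if'') direction by promoting an abstract isometry to a Lie group isomorphism and then reading the remaining data off the spectrum of the metric relative to the Killing form.

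The ``if'' direction is immediate: if $\varphi\colon G\to G'$ is a Lie group isomorphism with $\varphi(H)=H'$, $\varphi(K)=K'$, $\varphi(L)=L'$, then $d\varphi$ carries the negative Killing form $B$ of $\mathfrak g$ onto that of $\mathfrak g'$, hence carries the flag $\mathfrak h\subset\mathfrak k\subset\mathfrak l\subset\mathfrak g$ and its $B$-orthogonal refinement $\mathfrak g=\mathfrak h+\mathfrak n+\mathfrak u+\mathfrak p$ onto the corresponding objects for $G'$; since moreover $x=x'$ and $y=y'$, the $\varphi$-equivariant diffeomorphism $gH\mapsto\varphi(g)H'$ pulls $g'_{(x',y')}$ back to $g_{(x,y)}$ at the base point and hence, by equivariance under the transitive isometric actions, everywhere, so it is an isometry.

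For the converse, let $F\colon(G/H,g_{(x,y)})\to(G'/H',g'_{(x',y')})$ be an isometry; conjugation by $F$ is an isomorphism $c_F$ of the full isometry groups. Viewing $G$ and $G'$ inside their respective isometry groups via the given effective actions, the key step is to check that $c_F(G)=G'$, that is, that $G$ is intrinsically determined by $(G/H,g_{(x,y)})$. When $H=\{e\}$ this is the classical description of the isometry group of a left invariant metric on a compact simple Lie group: its identity component is $L(G)\cdot R(G_g)$, in which $L(G)\cong G$ is the unique connected normal subgroup that acts transitively and which remains normal in the full isometry group, so $c_F$ must map it onto the analogous $L(G')$. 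When $H\ne\{e\}$ one argues along the same lines, using that $\mathfrak g$ is a compact simple subalgebra of the isometry algebra acting transitively with isotropy $\mathfrak h$; this identification of $G$ as a canonical subgroup is the one point I expect to need genuine care rather than being routine. Granting it, $\varphi:=c_F|_G\colon G\to G'$ is a Lie group isomorphism and $F$ is $\varphi$-equivariant; composing $F$ with a left translation of $G'/H'$ (and conjugating $\varphi$ correspondingly) we may assume $F(eH)=e'H'$, whereupon equivariance forces $\varphi(H)=H'$ by comparing stabilizers, and $F$ becomes the map $gH\mapsto\varphi(g)H'$.

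It remains to recover $K$, $L$, $x$, $y$. Since $d\varphi$ carries $B$ onto $B'$ and $\mathfrak h$ onto $\mathfrak h'$, it carries $\mathfrak m=\mathfrak n+\mathfrak u+\mathfrak p$ onto $\mathfrak m'$, and because $F$ is an isometry, $d\varphi|_{\mathfrak m}$ intertwines $g_{(x,y)}$ with $g'_{(x',y')}$. Let $\Lambda\in\mathrm{End}(\mathfrak m)$ be the $B$-self-adjoint operator with $g_{(x,y)}=B(\Lambda\,\cdot\,,\,\cdot\,)$, so that $\Lambda$ is $\mathrm{id}$, $x\,\mathrm{id}$, $y\,\mathrm{id}$ on $\mathfrak n$, $\mathfrak u$, $\mathfrak p$ respectively, and let $\Lambda'$ be its analogue on $\mathfrak m'$; then $d\varphi|_{\mathfrak m}\circ\Lambda=\Lambda'\circ d\varphi|_{\mathfrak m}$. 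When $1,x,y$ are pairwise distinct---the situation relevant to Theorems \ref{main1} and \ref{main2}---the eigenspaces of $\Lambda$ are exactly $\mathfrak n$, $\mathfrak u$, $\mathfrak p$, so $d\varphi$ sends $\mathfrak n\mapsto\mathfrak n'$, $\mathfrak u\mapsto\mathfrak u'$, $\mathfrak p\mapsto\mathfrak p'$ with matching eigenvalues, which gives $x=x'$ and $y=y'$ and then $d\varphi(\mathfrak k)=d\varphi(\mathfrak h+\mathfrak n)=\mathfrak k'$, $d\varphi(\mathfrak l)=d\varphi(\mathfrak h+\mathfrak n+\mathfrak u)=\mathfrak l'$, whence $\varphi(K)=K'$ and $\varphi(L)=L'$. (If two of $1,x,y$ coincide, $\Lambda$ no longer separates all three summands, and the summand it fails to detect is precisely the one on which the metric does not depend, so in that degenerate range the statement should be read for the nondegenerate metrics, or one invokes directly the submersion picture of $g_{(1,y)}$ or $g_{(x,x)}$.) The canonical identification of $G$ inside the isometry group is the main obstacle.
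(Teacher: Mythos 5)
The paper offers no argument here---it declares the proposition obvious and omits the proof---so there is no ``paper's route'' to compare yours against; I can only assess the route itself. Your ``if'' direction is fine, and so is the endgame: once an isomorphism $\varphi\colon G\to G'$ inducing the isometry is in hand, $d\varphi$ carries $B$ to $B'$, so the eigenspace decomposition of the metric relative to $B$ recovers $\mathfrak n,\mathfrak u,\mathfrak p$ and the scalars $x,y$, hence $K$ and $L$, provided $1,x,y$ are pairwise distinct. You are also right---and it is worth saying explicitly---that the ``only if'' direction is simply false without that proviso: for $x=x'=1$ the metric $g_{(1,y)}=B|_{\mathfrak l}+yB|_{\mathfrak p}$ does not see $K$ at all, so two quadruples sharing $G,L,H$ but with non-isomorphic $K$'s (e.g.\ Types B.~8 and B.~9, $2\mathfrak{so}(8)$ versus $\mathfrak{so}(9)$ inside $\mathfrak{so}(16)\subset\mathfrak e_8$) produce literally the same metric with no $\varphi$ matching the $K$'s. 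So the proposition must be read for $1,x,y$ pairwise distinct, which is the regime actually relevant to Theorem \ref{main2}; on this point your analysis is sharper than the paper's claim of obviousness.

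The genuine gap is the one you flag yourself: promoting an abstract isometry $F$ to a map induced by an isomorphism $G\to G'$. For $H=\{e\}$ your appeal to the Ochiai--Takahashi/D'Atri--Ziller description of the isometry group of a left invariant metric on a compact simple Lie group (identity component $L(G)\cdot R(N)$ for a suitable closed $N$, with $L(G)$ the unique minimal connected normal transitive subgroup, hence preserved by conjugation by $F$) does close the argument. For $H\neq\{e\}$, ``one argues along the same lines'' is not a proof: the identity component of the isometry group of a homogeneous metric on $G/H$ can strictly contain $G$ (normal metrics on spheres are the standard warning), and since the metrics of interest are Einstein one cannot use the eigenspaces of the Ricci tensor to break any extra symmetry. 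Closing this would require either computing $\mathrm{Isom}^0(G/H,g_{(x,y)})$ for the quadruples of Table A, or weakening the proposition to classify the metrics only up to $G$-equivariant isometry---which is in fact all that the counting arguments later in the paper need. As written, your proof establishes the statement for $H=\{e\}$ with $1,x,y$ distinct, and leaves the $H\neq\{e\}$ case conditional on the unproved identification of $G$ inside the isometry group.
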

Now we compute the Ricci curvature of $g_{(x,y)}$. It is well known that the sectional curvature and Ricci curvature of a homogeneous Riemannian manifold can be explicitly expressed using the inner product on the tangent space and the Lie algebraic structure. In the literature, there are several versions of the formulas. Here we will use the formula of  the Ricci curvature of an invariant metric on a homogeneous compact Riemannian manifold  given by \cite{besse87book} (see (7.38) of \cite{besse87book}):
\begin{equation}\label{7.38}
  \mathrm{Ric}(X,Y)=\frac{1}{2}B(X,Y)-\frac{1}{2}\sum_{i}\langle [X,X_{i}]_{\mathfrak{m}},[Y,X_{i}]_{\mathfrak{m}}\rangle +\frac{1}{4}\sum_{i,j}\langle [X_{i},X_{j}]_{\mathfrak{m}},X\rangle \langle [X_{i},X_{j}]_{\mathfrak{m}},Y\rangle,
\end{equation}
 where $\{X_{i}\}$ is an orthonormal basis of $\mathfrak{m}$ with respect to the restriction of the inner product $\langle,\rangle$ to $\mathfrak{m}$.

 Now we have
\begin{lem}\label{Ric}
Let $(G,L,K,H)$  be a basic  quadruple. Then the Ricci curvature of  $(G/H, g_{(x,y)})$ is given as follows:

\rm(1)\quad $\mathrm{Ric}(n,u)=\mathrm{Ric}(n,p)=\mathrm{Ric}(u,p)=0$,

\rm(2)\quad $\mathrm{Ric}(n,n)=
\big[\frac{1}{4}c_{2}+\frac{1}{2}h_{\mathfrak{n}}+\frac{1}{4x^{2}}(c_{1}-c_{2})+\frac{1}{4y^{2}}(1-c_{1})\big]B(n,n)$,

\rm(3)\quad
$\mathrm{Ric}(u,u)=
\big[\frac{1}{2}k_{\mathfrak{u}}+\frac{1}{4}c_{1}-\frac{1}{2x}(k_{\mathfrak{u}}-h_{\mathfrak{u}})+\frac{x^{2}}{4y^{2}}(1-c_{1})\big]B(u,u)$,

\rm(4)\quad
$\mathrm{Ric}(p,p)=
\big[\frac{1}{4}+\frac{1}{2}l_{\mathfrak{p}}-\frac{1}{2y}(k_{\mathfrak{p}}-h_{\mathfrak{p}})-\frac{x}{2y}(l_{\mathfrak{p}}-k_{\mathfrak{p}})\big]B(p,p)$,

where $ n\in \mathfrak{n}, u\in \mathfrak{u}, p\in \mathfrak{p}$.
\end{lem}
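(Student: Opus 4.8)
The plan is to apply formula \eqref{7.38} directly, exploiting the $B$-orthogonal decomposition $\mathfrak{m}=\mathfrak{n}+\mathfrak{u}+\mathfrak{p}$ and the nested bracket relations $[\mathfrak{h},\mathfrak{n}]\subset\mathfrak{n}$, $[\mathfrak{h}+\mathfrak{n},\mathfrak{u}]\subset\mathfrak{u}$, $[\mathfrak{h}+\mathfrak{n}+\mathfrak{u},\mathfrak{p}]\subset\mathfrak{p}$, together with the fact that all the relevant Casimir operators are scalar (condition (3) of Definition \ref{def3.1}) and the scalar relations $B_{\bar{\mathfrak{l}}}=c_1B|_{\bar{\mathfrak{l}}}$, $B_{\bar{\mathfrak{k}}}=c_2B|_{\bar{\mathfrak{k}}}$ from condition (2). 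First I would fix a $g_{(x,y)}$-orthonormal basis of $\mathfrak{m}$ adapted to the decomposition: take a $B$-orthonormal basis $\{n_a\}$ of $\mathfrak{n}$, $\{u_b\}$ of $\mathfrak{u}$, $\{p_c\}$ of $\mathfrak{p}$; then $\{n_a\}\cup\{x^{-1/2}u_b\}\cup\{y^{-1/2}p_c\}$ is $g_{(x,y)}$-orthonormal. Substituting this into \eqref{7.38} splits each of the three terms (the Killing-form term, the ``Jacobi'' term $\sum_i\langle[X,X_i]_{\mathfrak m},[Y,X_i]_{\mathfrak m}\rangle$, and the structure-constant term $\sum_{i,j}\langle[X_i,X_j]_{\mathfrak m},X\rangle\langle[X_i,X_j]_{\mathfrak m},Y\rangle$) into blocks indexed by which of $\mathfrak{n},\mathfrak{u},\mathfrak{p}$ the running indices lie in, each block carrying an explicit power of $x$ and $y$ from the rescaling.

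The vanishing statement (1) is immediate from $\mathrm{Ad}(H)$-invariance: each of $\mathfrak{n},\mathfrak{u},\mathfrak{p}$ is an $\mathrm{Ad}(H)$-submodule, and more importantly the three are mutually non-isomorphic as pieces of the filtration, so $\mathrm{Ric}$, being $\mathrm{Ad}(H)$-invariant and (by the Casimir hypotheses) acting as a scalar on each piece, has no off-diagonal blocks; alternatively one checks the cross terms in \eqref{7.38} vanish because $[\mathfrak n,\mathfrak n]$ has no $\mathfrak p$-component pairing with $[\mathfrak n,\mathfrak p]\subset\mathfrak p$, etc. For (2)--(4), the key bookkeeping device is to rewrite the sums over $\mathfrak m$ in terms of Casimir operators of the subalgebras. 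Concretely, for $n\in\mathfrak n$: $\sum_i\langle[n,X_i]_{\mathfrak m},[n,X_i]_{\mathfrak m}\rangle$ decomposes according to $X_i\in\mathfrak n,\mathfrak u,\mathfrak p$; using $\mathrm{ad}(n)$ acting on $\mathfrak k=\mathfrak h\oplus\mathfrak n$, on $\mathfrak u$, and on $\mathfrak p$, and the identities relating $\sum_a(\mathrm{ad}(n_a))^2|_V$ to $C_{\mathfrak k,V}-C_{\mathfrak h,V}$ on each module $V$, one expresses everything through $h_{\mathfrak n},h_{\mathfrak u},h_{\mathfrak p},k_{\mathfrak u},k_{\mathfrak p},l_{\mathfrak p}$ and the constants $c_1,c_2$; the structure-constant term similarly collapses because $\sum\langle[X_i,X_j]_{\mathfrak m},n\rangle^2$ only sees $[\mathfrak n,\mathfrak n]_{\mathfrak n}$, $[\mathfrak u,\mathfrak u]_{\mathfrak n}$, $[\mathfrak p,\mathfrak p]_{\mathfrak n}$, each computable from a trace of an appropriate $\mathrm{ad}$-square. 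The coefficients $\tfrac14 c_2$, $\tfrac12 h_{\mathfrak n}$, $\tfrac1{4x^2}(c_1-c_2)$, $\tfrac1{4y^2}(1-c_1)$ then fall out after collecting the $x^{\pm},y^{\pm}$ weights. The computations for $u\in\mathfrak u$ and $p\in\mathfrak p$ are entirely analogous, using $[\mathfrak l,\mathfrak p]\subset\mathfrak p$ and the chain $\mathfrak h\subset\mathfrak k\subset\mathfrak l$.

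The main obstacle is the careful accounting of the scaling weights and of which Casimir difference appears in each block — in particular, disentangling the contribution of $[\mathfrak u,\mathfrak p]\subset\mathfrak p$ from that of $[\mathfrak n,\mathfrak p]$ in $\mathrm{Ric}(p,p)$, which is what produces the two distinct ``mixing'' terms $-\tfrac1{2y}(k_{\mathfrak p}-h_{\mathfrak p})$ and $-\tfrac{x}{2y}(l_{\mathfrak p}-k_{\mathfrak p})$; and likewise separating the $[\mathfrak n,\mathfrak u]$ contribution (weight $x^{-1}$, giving $-\tfrac1{2x}(k_{\mathfrak u}-h_{\mathfrak u})$) from the $[\mathfrak u,\mathfrak p]$ contribution (weight $x^2/y^2$, giving $\tfrac{x^2}{4y^2}(1-c_1)$) in $\mathrm{Ric}(u,u)$. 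This is a bounded, mechanical computation once the basis and the Casimir identities are set up, so I would present the $\mathfrak n$-case in detail and indicate that the other two follow by the same method, reading off the answer from the filtration $\mathfrak h\subset\mathfrak k\subset\mathfrak l\subset\mathfrak g$.
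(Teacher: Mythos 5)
Your proposal follows essentially the same route as the paper: rescale a $B$-orthonormal basis adapted to $\mathfrak{m}=\mathfrak{n}+\mathfrak{u}+\mathfrak{p}$ to a $g_{(x,y)}$-orthonormal one, substitute into \eqref{7.38}, split each sum into blocks carrying explicit powers of $x,y$, and collapse the blocks to Casimir constants and to $c_1,c_2$ via the relations $B_{\bar{\mathfrak{l}}}=c_1B|_{\bar{\mathfrak{l}}}$, $B_{\bar{\mathfrak{k}}}=c_2B|_{\bar{\mathfrak{k}}}$; the identification of which block produces which term in (2)--(4) matches the paper's computation. One caveat: your first justification of the vanishing in (1) --- that $\mathfrak{n},\mathfrak{u},\mathfrak{p}$ are mutually non-isomorphic $\mathrm{Ad}(H)$-modules --- does not hold in general (when $H=\{e\}$, as in Table B, all three are trivial modules), so you must fall back on your alternative direct check. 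Note also that this direct check for $\mathrm{Ric}(n,u)$ is not purely a matter of bracket inclusions: after reduction the surviving terms are $\frac{1}{4x}B_{\bar{\mathfrak{l}}}(n,u)+\frac{x}{4y^{2}}\bigl[B(n,u)-B_{\bar{\mathfrak{l}}}(n,u)\bigr]$, and one needs $B_{\bar{\mathfrak{l}}}(n,u)=c_1B(n,u)=0$ from the $B$-orthogonality of $\mathfrak{n}$ and $\mathfrak{u}$ together with condition (2) of Definition \ref{def3.1} to conclude; with that supplied, the argument goes through as in the paper.
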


\begin{proof}
The formulas will be proved through   a direct computation.
Let $\{h_{i}\}\subset \mathfrak{h}$, $\{n_{i}\}\subset \mathfrak{n}$, $\{u_{i}\}\subset \mathfrak{u}$, $\{p_{i}\}\subset \mathfrak{p}$
be a B-orthonormal basis of $\mathfrak{g}$. Then
$\{n_{i}\}\cup \{\frac{u_{i}}{\sqrt{x}}\}\cup \{\frac{p_{i}}{\sqrt{y}}\}$ is an orthonormal basis of $\mathfrak{m}$ with respect
to $g_{(x,y)}$. Given   $ n\in \mathfrak{n}$, $u\in \mathfrak{u}$, and $p\in \mathfrak{p}$,  by \eqref{7.38}, one has
\begin{eqnarray*}
% \nonumber to remove numbering (before each equation)
  \mathrm{Ric}(u,p) &=& \frac{1}{2}B(u,p)-\frac{1}{2}\sum_{i}\langle [u,n_{i}]_{\mathfrak{m}},[p,n_{i}]_{\mathfrak{m}}\rangle +\frac{1}{4}\sum_{i,j}\langle u,[\frac{p_{i}}{\sqrt{y}},\frac{p_{j}}{\sqrt{y}}]_{\mathfrak{m}}\rangle
   \langle u,[\frac{p_{i}}{\sqrt{y}},\frac{p_{j}}{\sqrt{y}}]_{\mathfrak{m}}\rangle\\
   && -\frac{1}{2}\sum_{i}\langle[u,\frac{u_{i}}{\sqrt{x}}]_{\mathfrak{m}},[p,\frac{u_{i}}{\sqrt{x}}]_{\mathfrak{m}}\rangle -\frac{1}{2}\sum_{i}\langle[u,\frac{p_{i}}{\sqrt{y}}]_{\mathfrak{m}},[p,\frac{p_{i}}{\sqrt{y}}]_{\mathfrak{m}}\rangle\\
   &=& -\frac{1}{2}y\sum_{i}B([u,\frac{p_{i}}{\sqrt{y}}],[p,\frac{p_{i}}{\sqrt{y}}]) +\frac{1}{4}xy\sum_{i,j}B( u,[\frac{p_{i}}{\sqrt{y}},\frac{p_{j}}{\sqrt{y}}])  B(p,[\frac{p_{i}}{\sqrt{y}},\frac{p_{j}}{\sqrt{y}}])\\
  &=&0,
\end{eqnarray*}
Similarly,  $\mathrm{Ric}(n,p)=0$. On the other hand, we have
\begin{eqnarray*}
% \nonumber to remove numbering (before each equation)
  \mathrm{Ric}(n,u) &=& \frac{1}{2}B(n,u)-\frac{1}{2}\sum_{i}\langle[n,n_{i}]_{\mathfrak{m}},
  [u,n_{i}]_{\mathfrak{m}}\rangle  +\frac{1}{4}\sum_{i,j}\langle n,[\frac{p_{i}}{\sqrt{y}},\frac{p_{j}}{\sqrt{y}}]_{\mathfrak{m}}\rangle\langle u,[\frac{p_{i}}{\sqrt{y}},\frac{p_{j}}{\sqrt{y}}]_{\mathfrak{m}}\rangle \\
   && -\frac{1}{2}\sum_{i}\langle[n,\frac{u_{i}}{\sqrt{x}}]_{\mathfrak{m}},
   [u,\frac{u_{i}}{\sqrt{x}}]_{\mathfrak{m}}\rangle
    -\frac{1}{2}\sum_{i}\langle[n,\frac{p_{i}}{\sqrt{y}}]_{\mathfrak{m}},
    [u,\frac{p_{i}}{\sqrt{y}}]_{\mathfrak{m}}\rangle\\
   && +\frac{1}{4}\sum_{i,j}\langle n,[n_{i},n_{j}]_{\mathfrak{m}}\rangle\langle u,[n_{i},n_{j}]_{\mathfrak{m}}\rangle
    +\frac{1}{4}\sum_{i,j}\langle n,[\frac{u_{i}}{\sqrt{x}},\frac{u_{j}}{\sqrt{x}}]_{\mathfrak{m}}\rangle\langle u,[\frac{u_{i}}{\sqrt{x}},\frac{u_{j}}{\sqrt{x}}]_{\mathfrak{m}}\rangle\\
   &=&  -\frac{1}{2}\sum_{i}B([n,u_{i}],[u,u_{i}])-\frac{1}{2}\sum_{i}B([n,p_{i}],[u,p_{i}])\\
   && +\frac{1}{4x}\sum_{i,j}B(n,[u_{i},u_{j}])B(u,[u_{i},u_{j}])+\frac{x}{4y^{2}}\sum_{i,j}B(n,[p_{i},p_{j}])B(u,[p_{i},p_{j}])  \\
   &=& \frac{1}{4x}\sum_{i,j}B([n,u_{i}],u_{j})B([u,u_{i}],u_{j})+\frac{x}{4y^{2}}\sum_{i,j}B([n,p_{i}],p_{j})B([u,p_{i}],p_{j})  \\
   &=& \frac{1}{4x}\sum_{i}B([n,u_{i}],[u,u_{i}])+\frac{x}{4y^{2}}\sum_{i}B([n,p_{i}],[u,p_{i}]) \\
   &=&  \frac{1}{4x}B_{\bar{\mathfrak{l}}}(n,u)+\frac{x}{4y^{2}}[B(n,u)-B_{\bar{\mathfrak{l}}}(n,u)]\\
   &=&  0,
\end{eqnarray*}
which proves the first assertion.

Now, a direct calculation shows that
\begin{eqnarray*}
% \nonumber to remove numbering (before each equation)
  \mathrm{Ric}(n,n) &=& \frac{1}{2}B(n,n)-\frac{1}{2}\sum_{i}\langle[n,n_{i}]_{\mathfrak{m}},[n,n_{i}]_{\mathfrak{m}}\rangle -\frac{1}{2}\sum_{i}\langle[n,\frac{u_{i}}{\sqrt{x}}]_{\mathfrak{m}},[n,\frac{u_{i}}{\sqrt{x}}]_{\mathfrak{m}}\rangle\\
   && -\frac{1}{2}\sum_{i}\langle[n,\frac{p_{i}}{\sqrt{y}}]_{\mathfrak{m}},[n,\frac{p_{i}}{\sqrt{y}}]_{\mathfrak{m}}\rangle
   +\frac{1}{4}\sum_{i,j}\langle n,[n_{i},n_{j}]_{\mathfrak{m}}\rangle^{2}\\
   &&  +\frac{1}{4}\sum_{i,j}\langle n,[\frac{u_{i}}{\sqrt{x}},\frac{u_{j}}{\sqrt{x}}]_{\mathfrak{m}}\rangle^{2}
   +\frac{1}{4}\sum_{i,j}\langle n,[\frac{p_{i}}{\sqrt{y}},\frac{p_{j}}{\sqrt{y}}]_{\mathfrak{m}}\rangle^{2}\\
   %&=&   %\frac{1}{2}B(n,n)-\frac{1}{2}\big[\sum_{i}B([n,n_{i}],[n,n_{i}])-\sum_{i,j}B([n,n_{i}],h_{j})^{2}\big] % \\
  % && -\frac{1}{2}\sum_{i}B([n,u_{i}],[n,u_{i}])-\frac{1}{2}\sum_{i}B([n,p_{i}],[n,p_{i}]) \\
  % && +\frac{1}{4}\sum_{i,j}B(n,[n_{i},n_{j}])^{2}+\frac{1}{4x^{2}}\sum_{i,j}B(n,[u_{i},u_{j}])^{2} %\\
  % &&  +\frac{1}{4y^{2}}\sum_{i,j}B(n,[p_{i},p_{j}])^{2}\\
   &=& \sum_{i,j}B([n,n_{i}],h_{j})^{2}+\frac{1}{4}\sum_{i,j}B(n,[n_{i},n_{j}])^{2}\\
   &&+\frac{1}{4x^{2}}\sum_{i}B([n,u_{i}],[n,u_{i}])+\frac{1}{4y^{2}}\sum_{i}B([n,p_{i}],[n,p_{i}]).
\end{eqnarray*}
Since
\begin{eqnarray*}
% \nonumber to remove numbering (before each equation)
   \sum_{i,j}B(n,[n_{i},n_{j}])^{2} &=& \sum_{i}B([n,n_{i}],[n,n_{i}])- \sum_{i,j}B(n_{j},[n,h_{i}])^{2}\\
   &=&  B_{\bar{\mathfrak{k}}}(n,n)-2B(C_{\mathfrak{h},\mathfrak{n}}(n),n),
\end{eqnarray*}
we have
\begin{eqnarray}
% \nonumber to remove numbering (before each equation)
  \mathrm{Ric}(n,n) &=& \sum_{i}B([n,h_{i}],[n,h_{i}])+\frac{1}{4}[B_{\bar{\mathfrak{k}}}(n,n)-2B(C_{\mathfrak{h},\mathfrak{n}}(n),n)] \nonumber\\
   &&+\frac{1}{4x^{2}}[B_{\bar{\mathfrak{l}}}(n,n)-B_{\bar{\mathfrak{k}}}(n,n)]
   +\frac{1}{4y^{2}}[B(n,n)-B_{\bar{\mathfrak{l}}}(n,n)] \nonumber\\
   &=& \big[\frac{1}{4}c_{2}+\frac{1}{2}h_{\mathfrak{n}}+\frac{1}{4x^{2}}(c_{1}-c_{2})+\frac{1}{4y^{2}}(1-c_{1})\big]B(n,n).
\end{eqnarray}
Furthermore, using a similar argument, we get
\begin{eqnarray*}
% \nonumber to remove numbering (before each equation)
  \mathrm{Ric}(u,u) &=& \frac{1}{2}B(u,u)-\frac{1}{2}\sum_{i}\langle[u,n_{i}]_{\mathfrak{m}},[u,n_{i}]_{\mathfrak{m}}\rangle -\frac{1}{2}\sum_{i}\langle[u,\frac{u_{i}}{\sqrt{x}}]_{\mathfrak{m}},[u,\frac{u_{i}}{\sqrt{x}}]_{\mathfrak{m}}\rangle\\
   && -\frac{1}{2}\sum_{i}\langle[u,\frac{p_{i}}{\sqrt{y}}]_{\mathfrak{m}},[u,\frac{p_{i}}{\sqrt{y}}]_{\mathfrak{m}}\rangle
   +\frac{1}{2}\sum_{i,j}\langle u,[n_{i},\frac{u_{j}}{\sqrt{x}}]_{\mathfrak{m}}\rangle^{2}\\
   &&  +\frac{1}{4}\sum_{i,j}\langle u,[\frac{u_{i}}{\sqrt{x}},\frac{u_{j}}{\sqrt{x}}]_{\mathfrak{m}}\rangle^{2}
   +\frac{1}{4}\sum_{i,j}\langle u,[\frac{p_{i}}{\sqrt{y}},\frac{p_{j}}{\sqrt{y}}]_{\mathfrak{m}}\rangle^{2}\\
   &=&  \frac{1}{2}B(u,u)-\frac{x}{2}\sum_{i}B([u,n_{i}],[u,n_{i}]) -\frac{1}{2x}\sum_{i}\langle[u,u_{i}]_{\mathfrak{m}},[u,u_{i}]_{\mathfrak{m}}\rangle \\
   && -\frac{1}{2}\sum_{i}B([u,p_{i}],[u,p_{i}])+\frac{x}{2}\sum_{i,j}B(u,[n_{i},u_{j}])^{2} \\
   && +\frac{1}{4}\sum_{i,j}B(u,[u_{i},u_{j}])^{2}+\frac{x^{2}}{4y^{2}}\sum_{i,j}B(u,[p_{i},p_{j}])^{2}.
\end{eqnarray*}
Next, since
\begin{eqnarray*}
% \nonumber to remove numbering (before each equation)
   &&  \sum_{i,j}B(u,[u_{i},u_{j}])^{2}\\
   &=& \sum_{i,j}B([u,u_{i}],u_{j})^{2} \\
   &=& \sum_{i}B([u,u_{i}],[u,u_{i}])-\sum_{i,j}B([u,u_{i}],n_{j})^{2}-\sum_{i,j}B([u,u_{i}],h_{j})^{2} \\
   &=& B_{\bar{\mathfrak{l}}}(u,u)-2\sum_{i}B([u,n_{i}],[u,n_{i}])-2\sum_{i}B([u,h_{i}],[u,h_{i}]) \\
   &=& (c_{1}-2k_{\mathfrak{u}})B(u,u),
\end{eqnarray*}
we have
\begin{eqnarray*}
% \nonumber to remove numbering (before each equation)
   &&  \sum_{i}\langle[u,u_{i}]_{\mathfrak{m}},[u,u_{i}]_{\mathfrak{m}}\rangle\\
   &=&  \sum_{i,j}\big[\langle[u,u_{i}]_{\mathfrak{m}},\frac{u_{j}}{\sqrt{x}}\rangle^{2}+\langle[u,u_{i}]_{\mathfrak{m}},n_{j}\rangle^{2}\big]\\
   &=&  \sum_{i,j}\big[xB([u,u_{i}],u_{j})^{2}+B([u,u_{i}],n_{j})^{2}\big]\\
   &=& \sum_{i,j}\big[xB(u,[u_{i},u_{j}])^{2}\big]+\sum_{i}B([u,n_{i}],[u,n_{i}]) \\
   &=& (c_{1}-2k_{\mathfrak{u}})xB(u,u)+(k_{\mathfrak{u}}-h_{\mathfrak{u}})B(u,u).
\end{eqnarray*}
Therefore we have
\begin{eqnarray}
% \nonumber to remove numbering (before each equation)
  \mathrm{Ric}(u,u) &=& \frac{1}{2}B(u,u)-\frac{1}{2x}\big[(c_{1}-2k_{\mathfrak{u}})xB(u,u)+(k_{\mathfrak{u}}-h_{\mathfrak{u}})B(u,u)\big] \nonumber\\
   && -\frac{1}{2}(1-c_{1})B(u,u)+\frac{1}{4}(c_{1}-2k_{\mathfrak{u}})B(u,u) \nonumber\\
   &&  +\frac{x^{2}}{4y^{2}}(1-c_{1})B(u,u)\nonumber\\
   &=&  \big[\frac{1}{2}k_{\mathfrak{u}}+\frac{1}{4}c_{1}-
   \frac{1}{2x}(k_{\mathfrak{u}}-h_{\mathfrak{u}})+\frac{x^{2}}{4y^{2}}(1-c_{1})\big]B(u,u),
\end{eqnarray}
and
\begin{eqnarray*}
% \nonumber to remove numbering (before each equation)
  \mathrm{Ric}(p,p) &=& \frac{1}{2}B(p,p)-\frac{1}{2}\sum_{i}\langle[p,n_{i}]_{\mathfrak{m}},[p,n_{i}]_{\mathfrak{m}}\rangle \\
   && -\frac{1}{2}\sum_{i}\langle[p,\frac{u_{i}}{\sqrt{x}}]_{\mathfrak{m}},[p,\frac{u_{i}}{\sqrt{x}}]_{\mathfrak{m}}\rangle -\frac{1}{2}\sum_{i}\langle[p,\frac{p_{i}}{\sqrt{y}}]_{\mathfrak{m}},[p,\frac{p_{i}}{\sqrt{y}}]_{\mathfrak{m}}\rangle \\
   && +\frac{1}{2}\sum_{i,j}\langle p,[n_{i},\frac{p_{j}}{\sqrt{y}}]_{\mathfrak{m}}\rangle^{2} +\frac{1}{2}\sum_{i,j}\langle p,[\frac{u_{i}}{\sqrt{x}},\frac{p_{j}}{\sqrt{y}}]_{\mathfrak{m}}\rangle^{2}
   +\frac{1}{4}\sum_{i,j}\langle p,[\frac{p_{i}}{\sqrt{y}},\frac{p_{j}}{\sqrt{y}}]_{\mathfrak{m}}\rangle^{2}\\
   &=& \frac{1}{2}B(p,p)-\frac{y}{2}\sum_{i}B([p,n_{i}],[p,n_{i}]) \\
   && -\frac{y}{2x}\sum_{i}B([p,u_{i}],[p,u_{i}])-\frac{1}{2y}\sum_{i}\langle[p,p_{i}]_{\mathfrak{m}},[p,p_{i}]_{\mathfrak{m}}\rangle \\
   &&+ \frac{y}{2}\sum_{i}B([p,n_{i}],[p,n_{i}])+ \frac{y}{2x}\sum_{i}B([p,u_{i}],[p,u_{i}])
   +\frac{1}{4}\sum_{i,j}B(p,[p_{i},p_{j}])^{2}\\
   &=& \frac{1}{2}B(p,p)-\frac{1}{2y}\sum_{i}\langle[p,p_{i}]_{\mathfrak{m}},[p,p_{i}]_{\mathfrak{m}}\rangle
   +\frac{1}{4}(1-2l_{\mathfrak{p}})B(p,p).
\end{eqnarray*}
Finally, since
\begin{eqnarray*}
% \nonumber to remove numbering (before each equation)
   && \sum_{i}\langle[p,p_{i}]_{\mathfrak{m}},[p,p_{i}]_{\mathfrak{m}}\rangle \\
   &=& \sum_{i,j}\big[\langle[p,p_{i}]_{\mathfrak{m}},\frac{p_{j}}{\sqrt{y}}\rangle^{2}+\langle[p,p_{i}]_{\mathfrak{m}},\frac{u_{j}}{\sqrt{x}}\rangle^{2}+\langle[p,p_{i}]_{\mathfrak{m}},n_{j}\rangle^{2}\big] \\
   &=&  \sum_{i,j}\big[yB([p,p_{i}],p_{j})^{2}+x^{2}B([p,p_{i}],\frac{u_{j}}{\sqrt{x}})^{2}+B([p,p_{i}],n_{j})^{2}\big]\\
   &=&  (1-2l_{\mathfrak{p}})yB(p,p)+x\sum_{i}B([p,u_{i}],[p,u_{i}])+\sum_{i}B([p,n_{i}],[p,n_{i}])\\
   &=&  (1-2l_{\mathfrak{p}})yB(p,p)+x(l_{\mathfrak{p}}-k_{\mathfrak{p}})B(p,p)+(k_{\mathfrak{p}}-h_{\mathfrak{p}})B(p,p),
\end{eqnarray*}
we have
\begin{eqnarray}
% \nonumber to remove numbering (before each equation)
  \mathrm{Ric}(p,p) &=& \frac{1}{2}B(p,p)- \frac{1}{2y}\big[(1-2l_{\mathfrak{p}})y+x(l_{\mathfrak{p}}-k_{\mathfrak{p}})+(k_{\mathfrak{p}}-h_{\mathfrak{p}})\big]B(p,p)  \nonumber\\
  &&+\frac{1}{4}(1-2l_{\mathfrak{p}})B(p,p)\nonumber\\
   &=& \big[\frac{1}{4}+\frac{1}{2}l_{\mathfrak{p}}-\frac{1}{2y}(k_{\mathfrak{p}}-h_{\mathfrak{p}})-\frac{x}{2y}(l_{\mathfrak{p}}-k_{\mathfrak{p}})\big]B(p,p).
\end{eqnarray}
This completes the proof of the lemma.
\end{proof}

\begin{prop}\label{prop4.4}
Let $(G,L,K,H)$  be a basic  quadruple. Then the following two assertions hold:
\begin{enumerate}
 \item If $h_{\mathfrak{n}}\neq h_{\mathfrak{u}}$, then the invariant metric $g_{(x,y)}$ on $G/H$ is Einstein if and only if
$(x,y)$ satisfies the following equations:
{\small \begin{eqnarray}\label{Einstein equ}
  &&\frac{1-c_{1}}{4}(\frac{1}{4}+\frac{1}{2}l_{\mathfrak{p}})^{2}x^{2}(x-1)\Delta(x)= \nonumber\\
 &&  \Big(\big[\frac{1}{2}(k_{\mathfrak{p}}-h_{\mathfrak{p}})+\frac{1-c_{1}}{4}
  +\frac{x}{2}(l_{\mathfrak{p}}-k_{\mathfrak{p}})\big]\Delta(x)
     +\frac{1-c_{1}}{4}(x-1)
  \big[(\frac{1}{4}c_{2}+\frac{1}{2}h_{\mathfrak{n}})x^{2}+\frac{c_{1}-c_{2}}{4}\big]\Big)^{2},
\end{eqnarray}}
\begin{equation}\label{x=y}
  y=x\sqrt{\frac{(1-c_{1})(x-1)}{4\Delta(x)}},
\end{equation}
where
\begin{equation}\label{}
  \Delta(x)=(\frac{1}{4}c_{2}+\frac{1}{2}h_{\mathfrak{n}})x^{2}
  -(\frac{1}{2}k_{\mathfrak{u}}+\frac{1}{4}c_{1})x+\frac{1}{2}(k_{\mathfrak{u}}-h_{\mathfrak{u}})+\frac{c_{1}-c_{2}}{4}.
\end{equation}
\item If $h_{\mathfrak{n}}=h_{\mathfrak{u}}$, then invariant metric $g_{(1,y)}$ on $G/H$ is Einstein if and only if
 $y$ satisfies the following equation:
 \begin{equation}\label{}
   (\frac{c_{1}}{4}+\frac{1}{2}h_{\mathfrak{n}})y^{2}
   -(\frac{1}{4}+\frac{1}{2}l_{\mathfrak{p}})y
   +\frac{1}{2}(\frac{1}{2}+l_{\mathfrak{p}}-\frac{c_{1}}{2}-h_{\mathfrak{p}})=0.
 \end{equation}
 Moreover,  in this case, the invariant metric $g_{(x,y)}\, (x\neq1)$ on $G/H$ is Einstein if and only if
$(x,y)$ satisfies the conditions:
\begin{eqnarray}\label{equ4.18}
  &&\frac{1-c_{1}}{4}(\frac{1}{4}+\frac{1}{2}l_{\mathfrak{p}})^{2}x^{2}\delta(x)
  =\nonumber\\
  &&\Big(\big[\frac{1}{2}(k_{\mathfrak{p}}-h_{\mathfrak{p}})
  +\frac{1-c_{1}}{4}+\frac{x}{2}(l_{\mathfrak{p}}-k_{\mathfrak{p}})\big]\delta(x)+\frac{1-c_{1}}{4}
  \big[(\frac{1}{4}c_{2}+\frac{1}{2}h_{\mathfrak{n}})x^{2}+\frac{c_{1}-c_{2}}{4}\big]\Big)^{2},
\end{eqnarray}
and
\begin{equation}\label{}
  y=x\sqrt{\frac{1-c_{1}}{4\delta(x)}},
\end{equation}
where
\begin{equation}\label{}
  \delta(x)=(\frac{1}{4}c_{2}+\frac{1}{2}h_{\mathfrak{n}})x
  -\frac{1}{2}(k_{\mathfrak{u}}-h_{\mathfrak{u}})-\frac{c_{1}-c_{2}}{4}.
\end{equation}
\end{enumerate}
\end{prop}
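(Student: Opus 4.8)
The plan is to convert the Einstein condition into the two scalar equations coming from Lemma~\ref{Ric} and then to massage those two equations into the stated polynomial form. By parts (1)--(4) of Lemma~\ref{Ric}, the Ricci tensor of $g_{(x,y)}$ is block diagonal with respect to $\mathfrak{m}=\mathfrak{n}+\mathfrak{u}+\mathfrak{p}$ and is a scalar multiple of $B$ on each block; writing $r_{\mathfrak{n}},r_{\mathfrak{u}},r_{\mathfrak{p}}$ for the three bracketed coefficients appearing there, we have $\mathrm{Ric}=r_{\mathfrak{n}}B|_{\mathfrak{n}}+r_{\mathfrak{u}}B|_{\mathfrak{u}}+r_{\mathfrak{p}}B|_{\mathfrak{p}}$. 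Since $g_{(x,y)}=B|_{\mathfrak{n}}+xB|_{\mathfrak{u}}+yB|_{\mathfrak{p}}$, the metric $g_{(x,y)}$ is Einstein if and only if there is a constant $c$ with $r_{\mathfrak{n}}=c$, $r_{\mathfrak{u}}=cx$ and $r_{\mathfrak{p}}=cy$; eliminating $c=r_{\mathfrak{n}}$, this is the system $x\,r_{\mathfrak{n}}=r_{\mathfrak{u}}$ and $y\,r_{\mathfrak{n}}=r_{\mathfrak{p}}$.

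First I would rewrite $x\,r_{\mathfrak{n}}=r_{\mathfrak{u}}$: multiplying by $x$, the terms containing $y^{-2}$ collapse to $\frac{x^{2}(x-1)(1-c_{1})}{4y^{2}}$ and the remaining terms sum exactly to $\Delta(x)$, so this equation is equivalent to $\Delta(x)=\frac{x^{2}(x-1)(1-c_{1})}{4y^{2}}$. The organizing observation is the identity $\Delta(1)=\frac{1}{2}(h_{\mathfrak{n}}-h_{\mathfrak{u}})$, which is exactly what produces the dichotomy in the statement. If $h_{\mathfrak{n}}\neq h_{\mathfrak{u}}$ then $\Delta(1)\neq 0$, which forces $x\neq 1$ for every Einstein $g_{(x,y)}$, and solving for $y^{2}$ gives \eqref{x=y}. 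If $h_{\mathfrak{n}}=h_{\mathfrak{u}}$, the factor $x-1$ divides $\Delta(x)$ and the quotient is precisely the polynomial $\delta(x)$ of the statement, so either $x=1$, in which case this first equation holds identically, or, cancelling $x-1$, we obtain $y^{2}=\frac{x^{2}(1-c_{1})}{4\delta(x)}$.

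Next I would substitute this value of $y^{2}$ into the second equation $y\,r_{\mathfrak{n}}=r_{\mathfrak{p}}$. Multiplying by $y$, clearing the $x^{-2}$ denominator, and then multiplying through by $\Delta(x)$, the left-hand side becomes exactly the quantity appearing squared on the right of \eqref{Einstein equ}, and the right-hand side becomes $\Delta(x)\bigl(\frac{1}{4}+\frac{1}{2}l_{\mathfrak{p}}\bigr)y$; squaring this identity and using \eqref{x=y} once more to replace $y^{2}$ yields \eqref{Einstein equ}. In the subcase $h_{\mathfrak{n}}=h_{\mathfrak{u}}$ with $x\neq 1$, inserting $\Delta(x)=(x-1)\delta(x)$ into \eqref{Einstein equ} and cancelling the common factor $(x-1)^{2}$ turns it into \eqref{equ4.18}; in the subcase $h_{\mathfrak{n}}=h_{\mathfrak{u}}$ with $x=1$ the very same computation, now with $x=1$, collapses to the displayed quadratic in $y$. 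Conversely, one runs these steps backwards: a pair of positive reals $(x,y)$ satisfying \eqref{x=y} and \eqref{Einstein equ} (respectively \eqref{equ4.18}, respectively the quadratic) yields the Einstein constant $c=r_{\mathfrak{n}}$.

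I expect the point needing the most care to be the squaring step in the converse direction, because passing to \eqref{Einstein equ} discards the sign of $\Delta(x)$. To exclude spurious solutions one observes that for an admissible pair $\Delta(x)$ and $x-1$ have the same sign: \eqref{Einstein equ} gives $(x-1)\Delta(x)\ge 0$, while \eqref{x=y} together with $1-c_{1}>0$ gives $(x-1)/\Delta(x)>0$; one then extracts the square root along the branch making $\Delta(x)\bigl(\frac{1}{4}+\frac{1}{2}l_{\mathfrak{p}}\bigr)y$ have that sign, which recovers the unsquared identity. The inequality $1-c_{1}>0$ is available because $\mathfrak{l}$ is a proper subalgebra of $\mathfrak{g}$ that is not an ideal (in particular whenever $G$ is simple, which covers all the quadruples of Tables~A and~B). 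Everything else is bookkeeping with the explicit coefficients of Lemma~\ref{Ric} and the constants attached to a basic quadruple.
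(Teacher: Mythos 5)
Your proposal follows essentially the same route as the paper's proof: reduce the Einstein condition to the three scalar equations coming from Lemma~\ref{Ric}, derive $y^{2}\Delta(x)=\frac{1-c_{1}}{4}x^{2}(x-1)$ from the $\mathfrak{n}$- and $\mathfrak{u}$-components, use $\Delta(1)=\frac{1}{2}(h_{\mathfrak{n}}-h_{\mathfrak{u}})$ to obtain the dichotomy between $x=1$ and $x\neq 1$, and then substitute into the $\mathfrak{p}$-component and square to reach \eqref{Einstein equ} and \eqref{equ4.18}. Your added attention to the sign ambiguity created by the squaring step is a point the paper treats only briefly in its converse direction, but it does not constitute a different method.
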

\begin{proof}
By Lemma \ref{Ric}, the invariant metric $g_{(x,y)}$ on $G/H$ is Einstein with Ricci constant $\lambda$ if and only if
$(x,y)$ satisfies the following equations:
\begin{equation}\label{equ1}
 \frac{1}{4}c_{2}+\frac{1}{2}h_{\mathfrak{n}}+\frac{1}{4x^{2}}(c_{1}-c_{2})+\frac{1}{4y^{2}}(1-c_{1})=\lambda,
\end{equation}
\begin{equation}\label{equ2}
  \frac{1}{2}k_{\mathfrak{u}}+\frac{1}{4}c_{1}-\frac{1}{2x}(k_{\mathfrak{u}}-h_{\mathfrak{u}})
  +\frac{x^{2}}{4y^{2}}(1-c_{1})=\lambda x,
\end{equation}
\begin{equation}\label{equ3}
  \frac{1}{4}+\frac{1}{2}l_{\mathfrak{p}}-\frac{1}{2y}(k_{\mathfrak{p}}-h_{\mathfrak{p}})
  -\frac{x}{2y}(l_{\mathfrak{p}}-k_{\mathfrak{p}})=\lambda y.
\end{equation}

Now assume $(x=1,y)$ is a solution of equations \eqref{equ1}, \eqref{equ2} and \eqref{equ3}. Then one has
\begin{equation*}
  \frac{1}{4}c_{2}+\frac{1}{2}h_{\mathfrak{n}}+\frac{1}{4}(c_{1}-c_{2})+\frac{1}{4y^{2}}(1-c_{1})=\lambda
  =\frac{1}{2}k_{\mathfrak{u}}+\frac{1}{4}c_{1}-\frac{1}{2}(k_{\mathfrak{u}}-h_{\mathfrak{u}})
  +\frac{1}{4y^{2}}(1-c_{1}),
\end{equation*}
hence
\begin{equation}\label{}
h_{\mathfrak{n}}=h_{\mathfrak{u}}.
\end{equation}
Moreover, plugging \eqref{equ1} into \eqref{equ3}, we have
\begin{equation}\label{}
  \frac{1}{4}+\frac{1}{2}l_{\mathfrak{p}}-\frac{1}{2y}(k_{\mathfrak{p}}-h_{\mathfrak{p}})
  -\frac{1}{2y}(l_{\mathfrak{p}}-k_{\mathfrak{p}})=[\frac{1}{2}h_{\mathfrak{n}}
  +\frac{1}{4}c_{1}+\frac{1}{4y^{2}}(1-c_{1})]y.
\end{equation}
Thus
\begin{equation}\label{}
   (\frac{c_{1}}{4}+\frac{1}{2}h_{\mathfrak{n}})y^{2}
   -(\frac{1}{4}+\frac{1}{2}l_{\mathfrak{p}})y
   +\frac{1}{2}(\frac{1}{2}+l_{\mathfrak{p}}-\frac{c_{1}}{2}-h_{\mathfrak{p}})=0.
 \end{equation}

Now assume  $(x,y)$ $(x\neq 1)$ is a solution of equations \eqref{equ1}, \eqref{equ2} and \eqref{equ3}.
Then plugging \eqref{equ1} into \eqref{equ2}, we get
\begin{equation*}
  \frac{1}{2}k_{\mathfrak{u}}+\frac{1}{4}c_{1}
  -\frac{1}{2x}(k_{\mathfrak{u}}-h_{\mathfrak{u}})+\frac{x^{2}}{4y^{2}}(1-c_{1})
  =\big[\frac{1}{4}c_{2}+\frac{1}{2}h_{\mathfrak{n}}+\frac{1}{4x^{2}}(c_{1}-c_{2})
  +\frac{1}{4y^{2}}(1-c_{1})\big]x.
\end{equation*}
Therefore we have
\begin{equation}\label{D}
  y^{2}\Delta(x)=\frac{1-c_{1}}{4}x^{2}(x-1),
\end{equation}
and
\begin{equation}\label{}
 y=x\sqrt{\frac{(1-c_{1})(x-1)}{4\Delta(x)}},
\end{equation}
where
\begin{equation}\label{}
  \Delta(x)=(\frac{1}{4}c_{2}+\frac{1}{2}h_{\mathfrak{n}})x^{2}
  -(\frac{1}{2}k_{\mathfrak{u}}+\frac{1}{4}c_{1})x
  +\frac{1}{2}(k_{\mathfrak{u}}-h_{\mathfrak{u}})+\frac{c_{1}-c_{2}}{4}.
\end{equation}

Now plugging \eqref{equ1} into \eqref{equ3}, we have
\begin{equation}\label{D1-1}
  \frac{1}{4}+\frac{1}{2}l_{\mathfrak{p}}-\frac{1}{2y}(k_{\mathfrak{p}}-h_{\mathfrak{p}})
  -\frac{x}{2y}(l_{\mathfrak{p}}-k_{\mathfrak{p}})
  =\big[\frac{1}{4}c_{2}+\frac{1}{2}h_{\mathfrak{n}}+\frac{1}{4x^{2}}(c_{1}-c_{2})+\frac{1}{4y^{2}}(1-c_{1})\big]y.
\end{equation}
Then we have
\begin{equation*}
  (\frac{1}{4}+\frac{1}{2}l_{\mathfrak{p}})y
  -\frac{1}{2}(k_{\mathfrak{p}}-h_{\mathfrak{p}})-\frac{x}{2}(l_{\mathfrak{p}}-k_{\mathfrak{p}})
  =\big[\frac{1}{4}c_{2}+\frac{1}{2}h_{\mathfrak{n}}
  +\frac{1}{4x^{2}}(c_{1}-c_{2})+\frac{1}{4y^{2}}(1-c_{1})\big]y^{2},
\end{equation*}
and
\begin{equation}\label{D1}
 (\frac{1}{4}+\frac{1}{2}l_{\mathfrak{p}})y\Delta(x)=
  \big[(\frac{1}{4}c_{2}+\frac{1}{2}h_{\mathfrak{n}}+\frac{c_{1}-c_{2}}{4x^{2}})y^{2}
  +\frac{1-c_{1}}{4}+\frac{1}{2}(k_{\mathfrak{p}}-h_{\mathfrak{p}})
  +\frac{x}{2}(l_{\mathfrak{p}}-k_{\mathfrak{p}})\big]\Delta(x).
\end{equation}
Now substituting  \eqref{D} into \eqref{D1}, we obtain
\begin{equation}\label{}
\begin{split}
  \frac{1-c_{1}}{4}(\frac{1}{4}+\frac{1}{2}l_{\mathfrak{p}})^{2}x^{2}(x-1)\Delta(x)
  =& \Big(\big[\frac{1}{2}(k_{\mathfrak{p}}-h_{\mathfrak{p}})+\frac{1-c_{1}}{4}
  +\frac{x}{2}(l_{\mathfrak{p}}-k_{\mathfrak{p}})\big]\Delta(x)\\
  &+\frac{1-c_{1}}{4}(x-1)
  \big[(\frac{1}{4}c_{2}+\frac{1}{2}h_{\mathfrak{n}})x^{2}+\frac{c_{1}-c_{2}}{4}\big]\Big)^{2}.
  \end{split}
\end{equation}

Notice that if $h_{\mathfrak{n}}=h_{\mathfrak{u}}$, then $\Delta(x)=(x-1)\delta(x)$, where
\begin{equation}\label{}
  \delta(x)=(\frac{1}{4}c_{2}+\frac{1}{2}h_{\mathfrak{n}})x
  -\frac{1}{2}(k_{\mathfrak{u}}-h_{\mathfrak{u}})-\frac{c_{1}-c_{2}}{4}.
\end{equation}
Thus, in this case,  equation (\ref{Einstein equ}) can be divided by $(x-1)^{2}$, which leads to the following equation:
\begin{eqnarray}\label{}
 && \frac{1-c_{1}}{4}(\frac{1}{4}+\frac{1}{2}l_{\mathfrak{p}})^{2}x^{2}\delta(x)=\nonumber\\
 && \Big(\big[\frac{1}{2}(k_{\mathfrak{p}}-h_{\mathfrak{p}})+\frac{1-c_{1}}{4}
  +\frac{x}{2}(l_{\mathfrak{p}}-k_{\mathfrak{p}})\big]\delta(x)+\frac{1-c_{1}}{4}
  \big[(\frac{1}{4}c_{2}+\frac{1}{2}h_{\mathfrak{n}})x^{2}+\frac{c_{1}-c_{2}}{4}\big]\Big)^{2}.
\end{eqnarray}

Conversely, if $x=z\neq 1$ is a solution of \eqref{Einstein equ}, then the combination of conditions $\Delta(z)\neq 0$ with
 the equation \eqref{D1} is equivalent to the equation \eqref{D1-1},
hence the system of equations \eqref{Einstein equ}, \eqref{x=y} is equivalent to the system of equations \eqref{equ1}, \eqref{equ2}, and \eqref{equ3}.
  This completes the proof of the proposition.
\end{proof}

Notice that the equation \eqref{Einstein equ} is an equation of order six in one variable, hence it might admit no real solutions. Moreover, if the isotropy representation of $H$ on $T_{eH}(G/H)$ decomposes into exactly three non-equivalent irreducible summands, then the $G$-invariant metrics must be of the form $g_{(x,y)}$ up to scaling.
These facts may provide us with a method to obtain new homogeneous spaces which admit no $G$-invariant Einstein metrics.
However, we will not deal with this problem here.

\section{Einstein metrics on normal homogeneous Einstein manifolds}

To prove the main theorem of this paper, we need the following result.
\begin{prop}\label{prop4.5}
Keep the notation as above. Let $(G,L,K,H)$  be a  standard  quadruple listed in Table A and Table B, and denote
$\omega_{1}=\frac{1}{4}+\frac{1}{2}l_{\mathfrak{p}}-k_{\mathfrak{p}}-\frac{c_{1}}{2}$,
$\omega_{2}=2k_{\mathfrak{p}}+c_{1}-2c_{2}-4h_{\mathfrak{p}}$.
Then we have $\omega_{1}\geq 0$, $\omega_{2}\geq 0$ except for the following cases:
\begin{description}
\item{(a)} Type A. 4. $n_{1}=n_{2}=2.$
$$\mathfrak{sp}(4n_{3}k)\supset 2\mathfrak{sp}(2n_{3}k) \supset 4\mathfrak{sp}(n_{3}k)
\supset 4n_{3}\mathfrak{sp}(k), \quad k\geq1, n_{3}\geq2.$$
$$\omega_{1}=-\frac{1}{2(4n_{3}k+1)},\quad \omega_{2}=\frac{2n_{3}k-4k-2}{4n_{3}k+1}.$$
Or $n_{2}=n_{3}=2.$
$$\mathfrak{sp}(4n_{1}k)\supset n_{1}\mathfrak{sp}(4k) \supset 2n_{1}\mathfrak{sp}(2k)
\supset 4n_{1}\mathfrak{sp}(k), \quad k\geq1, n_{1}\geq2.$$
$$\omega_{1}=\frac{2n_{1}k-4k-1}{2(4n_{1}k+1)},\quad \omega_{2}=-\frac{2}{4n_{1}k+1}.$$
\item{(b)} Type A. 5.
$$\mathfrak{e}_{6}\supset \mathfrak{so}(10)\oplus \mathds{R}\supset \mathfrak{so}(8)\oplus \mathds{R}^{2}\supset \mathds{R}^{6}.$$
$$\omega_{1}=-\frac{1}{6}, \quad \omega_{2}=0.$$
\item{(c)} Type A. 6.
 $$\mathfrak{e}_{7}\supset \mathfrak{so}(12)\oplus \mathfrak{su}(2)\supset \mathfrak{so}(8)\oplus 3\mathfrak{su}(2)\supset 7\mathfrak{su}(2).$$
 $$\omega_{1}=-\frac{1}{18}, \quad \omega_{2}=-\frac{2}{9}.$$
\item{(d)} Type B. 3. $n_{1}=n_{2}=2$.
 $$\mathfrak{sp}(4k)\supset 2\mathfrak{sp}(2k)\supset 4\mathfrak{sp}(k)\supset \{e\}, \quad k\geq 1.$$
 $$\omega_{1}=-\frac{1}{2(4k+1)},\quad \omega_{2}=\frac{2k}{4k+1}.$$
\item{(e)} Type B. 4.
 $$\mathfrak{so}(8)\supset \mathfrak{so}(7)\supset \mathfrak{g}_{2}\supset \{e\}.$$
  $$\omega_{1}=-\frac{1}{4},\quad \omega_{2}=\frac{1}{6}.$$
\item{(f)} Type B. 5.
 $$\mathfrak{f}_{4}\supset \mathfrak{so}(9)\supset \mathfrak{so}(8)\supset \{e\}.$$
  $$\omega_{1}=-\frac{5}{18},\quad \omega_{2}=\frac{2}{9}.$$
  \end{description}
\end{prop}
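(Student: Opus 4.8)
\textbf{Proof strategy for Proposition \ref{prop4.5}.}
Since the proposition prescribes the sign of two rational functions of the structure constants of a quadruple, the plan is to make $\omega_{1}$ and $\omega_{2}$ completely explicit in terms of the embedding data of $(G,L,K,H)$ and then to run through Tables A and B entry by entry. The inputs come from Lemma \ref{lem3.2}: as $G/H$, $G/K$ and $G/L$ are all standard Einstein, the constants $h_{\mathfrak{n}}=h_{\mathfrak{u}}=h_{\mathfrak{p}}$, $k_{\mathfrak{u}}=k_{\mathfrak{p}}$ and $l_{\mathfrak{p}}$ are supplied by the trace formulas \eqref{equ3.4}--\eqref{equ3.6}, i.e. by the dimensions and the branching ratios $\alpha_{i},\beta_{i},\gamma_{i}$ of the simple factors of $H$, $K$, $L$ (together with the contributions of their central tori), while $c_{1}$ and $c_{2}$ are the ratios $B_{\bar{\mathfrak{l}}}=c_{1}B|_{\bar{\mathfrak{l}}}$, $B_{\bar{\mathfrak{k}}}=c_{2}B|_{\bar{\mathfrak{k}}}$ attached to the effective quotients $\bar{L}/\bar{H}_{1}$, $\bar{K}/\bar{H}_{2}$. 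Substituting these into $\omega_{1}=\tfrac14+\tfrac12 l_{\mathfrak{p}}-k_{\mathfrak{p}}-\tfrac{c_{1}}{2}$ and $\omega_{2}=2k_{\mathfrak{p}}+c_{1}-2c_{2}-4h_{\mathfrak{p}}$ turns each into a closed expression. One observes that $\tfrac14+\tfrac12 l_{\mathfrak{p}}$ is exactly the Einstein constant of $(G/L,g_{B})$, hence equals $\tfrac12$ whenever $G/L$ is symmetric; and when $\mathfrak{l}$ and $\mathfrak{k}$ (and $\mathfrak{h}$, for $\omega_{2}$) are semisimple, equations \eqref{equ3.7}--\eqref{equ3.10} collapse the two quantities to
\[
\omega_{1}=-\tfrac14+\frac{\dim G-2\dim K}{2\dim L}\,l_{\mathfrak{p}},\qquad \omega_{2}=-1+\frac{\dim G+\dim L-4\dim H}{\dim L}\,l_{\mathfrak{p}}.
\]

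To evaluate $l_{\mathfrak{p}}$, $k_{\mathfrak{p}}$, $h_{\mathfrak{p}}$, $c_{1}$, $c_{2}$ one needs the branching ratios, which are read off as follows. For the block subalgebras $n\mathfrak{su}(m)\subset\mathfrak{su}(nm)$, $n\mathfrak{so}(m)\subset\mathfrak{so}(nm)$, $n\mathfrak{sp}(m)\subset\mathfrak{sp}(nm)$ the maximal-root formula and Table C give the ratio at once (for instance $\tfrac{m+1}{nm+1}$ in the symplectic case); for the other subalgebras of the exceptional Lie algebras one again applies the regular-subalgebra formula together with Table C, and one invokes Proposition \ref{c1} and Theorem \ref{c2} when $\mathfrak{l}$, $\mathfrak{k}$ or $\mathfrak{h}$ fails to be a single-ratio subalgebra --- this is exactly the point at which the effective quotients $\bar{\mathfrak{l}},\bar{\mathfrak{k}}$ must be used, the salient example being Type A.6, whose intermediate subalgebra $\mathfrak{so}(8)\oplus 3\mathfrak{su}(2)\subset\mathfrak{e}_{7}$ is one of the two exceptions in Theorem \ref{c2}.

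The verification then splits into two regimes. For the parametrized families, Types A.1--A.4 and B.1--B.3, one computes $\omega_{1}$ and $\omega_{2}$ as explicit rational functions of $n_{1},n_{2},n_{3},k$; in Type A.4, for instance, one finds $\omega_{1}=\frac{n_{1}n_{2}n_{3}k-4n_{3}k-2}{4(n_{1}n_{2}n_{3}k+1)}$ and $\omega_{2}=\frac{n_{2}n_{3}k-4k-2}{n_{1}n_{2}n_{3}k+1}$, and the inequalities $\omega_{1}\geq0$, $\omega_{2}\geq0$ reduce to elementary facts about the numerators (such as ``$n_{3}k(n_{1}n_{2}-4)\geq2$ unless $n_{1}=n_{2}=2$'') which follow from $n_{i}\geq2$ and the admissible range of $k$; this isolates exactly the sub-cases (a) and (d). For the finitely many sporadic quadruples --- the remaining entries of Tables A and B, with $\mathfrak{g}=\mathfrak{e}_{6},\mathfrak{e}_{7},\mathfrak{e}_{8},\mathfrak{f}_{4}$ or a low-rank classical exception such as $\mathfrak{so}(8)\supset\mathfrak{so}(7)\supset\mathfrak{g}_{2}$ --- one substitutes the numerical dimensions and ratios into the formulas and records the sign of $\omega_{1}$ and $\omega_{2}$, which yields the remaining exceptions (b), (c), (e), (f).

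The main obstacle is not conceptual but the volume and precision of the computation: getting every branching ratio for the exceptional algebras right, correctly identifying the effective quotients when $L$, $K$ or $H$ carries a center or has non-isomorphic simple factors, and, in the parametrized families, locating the exact boundary ($n_{1}=n_{2}=2$ versus $n_{2}=n_{3}=2$, and so on) at which a numerator changes sign. Everything needed for this is already available in Lemma \ref{lem3.2}, Proposition \ref{c1}, Theorem \ref{c2} and Table C.
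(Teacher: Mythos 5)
Your proposal is correct and follows essentially the same route as the paper: Lemma \ref{lem3.2} is used to express $l_{\mathfrak{p}}$, $k_{\mathfrak{p}}$, $h_{\mathfrak{p}}$ through $\dim G$, $\dim L$, $\dim K$, $\dim H$ and $c_{1}$, yielding exactly the two closed formulas $\omega_{1}=-\tfrac14+\frac{\dim G-2\dim K}{2\dim L}l_{\mathfrak{p}}$ and $\omega_{2}=\frac{\dim G+\dim L-4\dim H}{\dim L}l_{\mathfrak{p}}-1$ (with the simplification $l_{\mathfrak{p}}=\tfrac12$ when $G/L$ is symmetric), after which the paper likewise runs case by case through Tables A and B, relegating the explicit branching-ratio computations to Appendix A. Your sample values for Type A.4 and the identification of the boundary cases agree with the paper's.
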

\begin{proof}
Let $(G,L,K,H)$ be one of the standard quadruples listed in Table A and Table B which is not of  Type A 1, A 5, or A 6.  Then there exist
constants $c_{1}$, $c_{2}$, $c_{3}$ such that $B_{\mathfrak{l}}=c_{1}B|_{\mathfrak{l}}$,  $B_{\mathfrak{k}}=c_{2}B|_{\mathfrak{k}}$, and $B_{\mathfrak{h}}=c_{3}B|_{\mathfrak{h}}$.
By Lemma \ref{lem3.2}, one has
\begin{equation}\label{}
  l_{\mathfrak{p}}=\frac{\dim L}{\dim G/L}(1-c_{1}), \quad k_{\mathfrak{p}}=\frac{\dim K}{\dim L}l_{\mathfrak{p}}, \quad
   h_{\mathfrak{p}}=\frac{\dim H}{\dim L}l_{\mathfrak{p}}.
\end{equation}
Therefore we have
\begin{eqnarray}
% \nonumber to remove numbering (before each equation)
  \omega_{1} &=& \frac{1}{4}+\frac{1}{2}l_{\mathfrak{p}}-k_{\mathfrak{p}}-\frac{c_{1}}{2}\nonumber \\
   &=& \frac{1}{4}+\frac{1}{2}l_{\mathfrak{p}}-\frac{\dim K}{\dim L}l_{\mathfrak{p}}
   -\frac{1}{2}(1-\frac{\dim G-\dim L}{\dim L}l_{\mathfrak{p}}) \nonumber\\
   &=& -\frac{1}{4}+  \frac{\dim G-2\dim K}{2\dim L}l_{\mathfrak{p}},
\end{eqnarray}
and
\begin{eqnarray}\label{4.41}
% \nonumber to remove numbering (before each equation)
  \omega_{2} &=& 2k_{\mathfrak{p}}+c_{1}-2c_{2}-4h_{\mathfrak{p}} \nonumber\\
   &=& 2\frac{\dim K}{\dim L}l_{\mathfrak{p}}+(1-\frac{\dim G-\dim L}{\dim L}l_{\mathfrak{p}})-4\frac{\dim H}{\dim L}l_{\mathfrak{p}}\nonumber\\
   &&-2(1-\frac{\dim G-\dim K}{\dim L}l_{\mathfrak{p}})\nonumber\\
   &=&  \frac{\dim G+\dim L-4\dim H}{\dim L}l_{\mathfrak{p}}-1.
\end{eqnarray}
In particular, if $G/L$ is also a symmetric space, then $l_{\mathfrak{p}}=\frac{1}{2}$,  and hence we have
\begin{eqnarray}
% \nonumber to remove numbering (before each equation)
  \omega_{1} &=& \frac{1}{4\dim L}(\dim G-\dim L-2\dim K),\label{omega1} \\
  \omega_{2} &=& \frac{1}{2\dim L}(\dim G-\dim L-4\dim H). \label{omega2}
\end{eqnarray}
Moreover, it is obvious that, if $H=\{e\}$, then $\omega_{2}>0$.

We first consider the cases of Type A. 7, 8, 10, 11 and Type B. 8, 9, 10, 11, 12. In these cases, $G/L=E_{8}/\mathrm{SO}(16)$ is symmetric, and we have $l_{\mathfrak{p}}=\frac{1}{2}$.
Then by \eqref{omega1} and \eqref{omega2}, we have
\begin{eqnarray*}
% \nonumber to remove numbering (before each equation)
  \omega_{1} &=& \frac{1}{4\dim L}(\dim G-\dim L-2\dim K) \\
   &\geq& \frac{1}{480}(248-120-2\times 56) \\
   &>& 0,
\end{eqnarray*}
and
\begin{eqnarray*}
% \nonumber to remove numbering (before each equation)
  \omega_{2} &=& \frac{1}{2\dim L}(\dim G-\dim L-4\dim H) \\
   &\geq& \frac{1}{240}(248-120-4\times 24) \\
   &>& 0,
\end{eqnarray*}
where we have used the facts that $\dim G=\dim \mathfrak{e}_{8}=248$, $\dim L=\dim \mathfrak{so}(16)=120$,
$\dim K\leq \dim 2\mathfrak{so}(8)=56$, and $\dim H\leq \dim 8\mathfrak{su}(2)=24$.

Next we  will give an explicit description of the related quantities for the rest cases listed in Table A and Table B. Since the computations are somehow repetitive and rather lengthy, we
collect the description in Appendix A.

Now  the proof of the proposition is completed by the above arguments and the description in Appendix A.
\end{proof}

Now we can prove the main theorem of this section.
\begin{thm}\label{main}
Let $(G,L,K,H)$  be a basic  quadruple,
such that $k_{\mathfrak{u}}=k_{\mathfrak{p}}$, $h_{\mathfrak{u}}=h_{\mathfrak{p}}$.
Then $G/H$ admits at least one invariant
Einstein metric of the form $g_{(x,y)}$, with $x\neq 1$, $x\neq y$, if one of the following conditions holds:

(1)\quad $h_{\mathfrak{n}}< h_{\mathfrak{u}};$

(2)\quad $h_{\mathfrak{n}}= h_{\mathfrak{u}}$, and $G$ is simple. That is,  $(G,L,K,H)$ is one of the standard quadruples listed in Table A and Table B which is not the following ones:

(I) Type A. 4. $n_{1}=9m+1$, $n_{2}=n_{3}=2$, $k=2m$, $m\in \mathds{N}^{+}$.
\begin{equation*}
  \mathfrak{sp}(8m(9m+1))\supset (9m+1)\mathfrak{sp}(8m)\supset 2(9m+1)\mathfrak{sp}(4m)\supset 4(9m+1)\mathfrak{sp}(2m).
\end{equation*}

(II) Type A. 5.
 \begin{equation*}
  \mathfrak{e}_{6}\supset \mathfrak{so}(10)\oplus \mathds{R}
  \supset \mathfrak{so}(8)\oplus \mathds{R}^{2}\supset \mathds{R}^{6}.
\end{equation*}

(III) Type B. 3. $n_{1}=n_{2}=2$, $k=1$.
\begin{equation*}
  \mathfrak{sp}(4)\supset 2\mathfrak{sp}(2)\supset 4\mathfrak{sp}(1)\supset \{e\}.
\end{equation*}
\end{thm}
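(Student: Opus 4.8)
The plan is to reduce the statement, via Proposition \ref{prop4.4}, to a one–variable problem and then to solve it by an intermediate value argument. Using the hypotheses $k_{\mathfrak u}=k_{\mathfrak p}$, $h_{\mathfrak u}=h_{\mathfrak p}$, Lemma \ref{Ric} shows that for $x\neq1$ the metric $g_{(x,y)}$ is Einstein exactly when $x$ lies in the admissible set $\mathcal A=\{x>0:x\neq1,\ (x-1)/\Delta(x)>0\}$, $y=y(x):=\tfrac x2\sqrt{(1-c_1)(x-1)/\Delta(x)}$, and the continuous function
$$\varphi(x)=\tfrac1{y(x)}\Big(\tfrac14+\tfrac12 l_{\mathfrak p}-\tfrac{k_{\mathfrak p}-h_{\mathfrak p}}{2y(x)}-\tfrac{x(l_{\mathfrak p}-k_{\mathfrak p})}{2y(x)}\Big)-\Big(\tfrac14 c_2+\tfrac12 h_{\mathfrak n}+\tfrac{c_1-c_2}{4x^2}+\tfrac{1-c_1}{4y(x)^2}\Big)$$
vanishes (here the value of $y$ forced by $\mathrm{Ric}(n,n)/\langle n,n\rangle=\mathrm{Ric}(u,u)/\langle u,u\rangle$ has been substituted). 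So I would look for a zero of $\varphi$ on $\mathcal A$; since $x=y$ is equivalent to $Q(x):=\Delta(x)-\tfrac{1-c_1}{4}(x-1)=0$, it suffices to find a zero of $\varphi$ in $\mathcal A$ different from the (at most two) roots of the quadratic $Q$.

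The second ingredient is that the $K$–submersion always supplies an interior zero of $\varphi$. Restricting the Einstein system to the line $y=x$, the conditions $\mathrm{Ric}(n,n)/\langle n,n\rangle=\mathrm{Ric}(u,u)/\langle u,u\rangle$ and $\mathrm{Ric}(n,n)/\langle n,n\rangle=\mathrm{Ric}(p,p)/\langle p,p\rangle$ each become a quadratic equation, and a direct computation shows that these two quadratics \emph{coincide}, precisely because $k_{\mathfrak u}=k_{\mathfrak p}$ and $h_{\mathfrak u}=h_{\mathfrak p}$; both are $Q(x)=0$. Since $Q$ has positive leading coefficient, $Q(0)>0$ (using $c_1>c_2$) and $Q(1)=\tfrac12(h_{\mathfrak n}-h_{\mathfrak u})$, in either case $Q$ has a root $\bar x>1$, which lies in $\mathcal A$ (with $\Delta(\bar x)>0$); hence $g_{(\bar x,\bar x)}$ is Einstein and $\varphi(\bar x)=0$. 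In case (1), $Q(1)<0$ gives in addition a root $\bar x'\in(0,1)\cap\mathcal A$, while in case (2), $Q(x)=(x-1)\delta(x)$ and $\bar x$ is the only root of $Q$ in $\mathcal A$.

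Next I would analyse $\varphi$ near the boundary of the component of $\mathcal A$ of the form $(x_\ast,\infty)$ containing $\bar x$ (with $\Delta(x_\ast)=0$, resp. $\delta(x_\ast)=0$ in case (2)). From the explicit formula one gets $\lim_{x\to x_\ast^+}\varphi(x)=-\big(\tfrac14 c_2+\tfrac12 h_{\mathfrak n}+\tfrac{c_1-c_2}{4x_\ast^2}\big)<0$ and $\lim_{x\to\infty}\varphi(x)=-\tfrac{(l_{\mathfrak p}-k_{\mathfrak p})(c_2+2h_{\mathfrak n})}{2(1-c_1)}-\tfrac14 c_2-\tfrac12 h_{\mathfrak n}<0$, using $l_{\mathfrak p}\ge k_{\mathfrak p}$ and $0<c_2<c_1<1$; similarly $\varphi\to-\infty$ at the endpoint $x=1$ of the left component $(x_-,1)$ occurring in case (1). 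Consequently, if $\bar x$ is a \emph{simple} zero of $\varphi$ then $\varphi$ changes sign there, and the intermediate value theorem against these negative boundary limits produces a further zero of $\varphi$ strictly inside $(x_\ast,\infty)$; being $\neq\bar x$ (hence $x\neq y$) and lying in $\mathcal A$ with $x\neq1$ (and $\neq\bar x'$, since the component does not meet $(0,1)$), this is the asserted new Einstein metric. For case (1) I would show that $\bar x$ and $\bar x'$ cannot both be non‑simple zeros of $\varphi$ — by a degree count on the sextic \eqref{Einstein equ}, using that $\Delta$ is a genuine quadratic with $\Delta(1)\neq0$ — so one of them yields the metric. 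For case (2), when moreover $\omega_1\ge0$ and $\omega_2\ge0$, which by Proposition \ref{prop4.5} holds for every standard quadruple of Table A and Table B outside the list (a)--(f) there, one checks $x_\ast\ge1$ and that $\bar x$ is a simple zero of $\varphi$, the required non‑degeneracy being what the conjunction $\omega_1,\omega_2\ge0$ guarantees; the argument above then applies.

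The remaining work is a case‑by‑case treatment of the finitely many families in the list (a)--(f) of Proposition \ref{prop4.5}: the two subfamilies of Type A.4, Type A.5, A.6, and the families of Type B.3, B.4, B.5. For each, the integers $\dim\mathfrak g,\dim\mathfrak l,\dim\mathfrak k,\dim\mathfrak h$ and the constants $l_{\mathfrak p},c_1,c_2,h_{\mathfrak n},\dots$ are explicit (Table C, Lemma \ref{lem3.2}), so one can write $\varphi$ — equivalently the reduced polynomial \eqref{equ4.18} — down and count its admissible zeros; this bookkeeping I would relegate to an appendix. One finds that a zero other than $\bar x$ persists for Type A.4 with $n_1=n_2=2$, for Type A.4 with $n_2=n_3=2$ except when $n_1=9m+1,\ k=2m$, for Type A.6, for Type B.3 with $n_1=n_2=2,\ k\ge2$, and for Types B.4 and B.5, whereas for Type A.4 with $n_1=9m+1,\ n_2=n_3=2,\ k=2m$, for Type A.5, and for Type B.3 with $n_1=n_2=2,\ k=1$ the only admissible zero of $\varphi$ is $\bar x$ itself; these three are exactly the excluded cases (I), (II), (III). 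I expect the main obstacles to be, first, the non‑degeneracy of $\bar x$ in the generic part of case (2) — the one genuinely analytic (rather than combinatorial) inequality, which must be pushed through carefully enough not to also "prove" the false statement in the exceptional families — and, second, handling the infinite families in (a)--(f) uniformly, i.e. pinning down the precise parameter values where the extra zero degenerates into $\bar x$ or leaves $\mathcal A$.
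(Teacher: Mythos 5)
Your proposal is correct and is essentially the paper's own argument in a different wrapper: the same reduction of the $x=y$ locus to the quadratic $Q(x)=M(x-\alpha)(x-\beta)$, the same intermediate-value scheme in which $\omega_1\ge 0$ controls the sign change (equivalently the simplicity of the zero) at $\beta$ while $\omega_2\ge 0$ keeps the new root away from $x=1$, the same appeal to Proposition \ref{prop4.5}, and the same deferred case-by-case treatment of the exceptional families (a)--(f). Your transcendental function $\varphi$ vanishes on the admissible set exactly where, and to the same order as, the paper's factored polynomial $M(x-\alpha)(x-\beta)f(x)$, so your ``simplicity of $\bar x$'' is precisely the paper's explicit inequality $\bar f(\beta)=\tfrac{1-c_1}{8}(\tfrac12+l_{\mathfrak p})\beta(\omega_1\beta+k_{\mathfrak p}-h_{\mathfrak p})>0$.
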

\begin{proof}
Keep the notation as above. Suppose
\begin{equation}\label{M}
  \Delta(x)-\frac{1-c_{1}}{4}(x-1)=M(x-\alpha)(x-\beta),
\end{equation}
where $M=\frac{1}{4}c_{2}+\frac{1}{2}h_{\mathfrak{n}}>0$, and $\alpha,\beta\in \mathds{C}$.
Then it  follows easily from
 (\ref{x=y}) that $x=y$ if and only if $x=\alpha$, or $x=\beta$.

Now plugging (\ref{M}) into the right side of (\ref{Einstein equ}), one has
\begin{eqnarray}
% \nonumber to remove numbering (before each equation)
   &&\big[\frac{1}{2}(k_{\mathfrak{p}}-h_{\mathfrak{p}})+\frac{1-c_{1}}{4}
   +\frac{x}{2}(l_{\mathfrak{p}}-k_{\mathfrak{p}})\big]\Delta(x)+\frac{1-c_{1}}{4}(x-1)
  \big[(\frac{1}{4}c_{2}+\frac{1}{2}h_{\mathfrak{\mathfrak{n}}})x^{2}+\frac{c_{1}-c_{2}}{4}\big] \nonumber\\
   &=&  \big[\frac{1}{2}(k_{\mathfrak{p}}-h_{\mathfrak{p}})+\frac{1-c_{1}}{4}
   +\frac{x}{2}(l_{\mathfrak{p}}-k_{\mathfrak{p}})\big]M(x-\alpha)(x-\beta)\nonumber\\
   &&+\frac{1-c_{1}}{4}(x-1)\big[\frac{1}{2}(k_{\mathfrak{p}}-h_{\mathfrak{p}})
   +\frac{1-c_{1}}{4}+\frac{x}{2}(l_{\mathfrak{p}}-k_{\mathfrak{p}})+
   (\frac{1}{4}c_{2}+\frac{1}{2}h_{\mathfrak{n}})x^{2}+\frac{c_{1}-c_{2}}{4}\big]  \nonumber\\
   &=& \big[\frac{1}{2}(k_{\mathfrak{p}}-h_{\mathfrak{p}})+\frac{1-c_{1}}{4}
   +\frac{x}{2}(l_{\mathfrak{p}}-k_{\mathfrak{p}})\big]M(x-\alpha)(x-\beta) \nonumber\\
   &&+ \frac{1-c_{1}}{4}(x-1)\big[M(x-\alpha)(x-\beta)+(\frac{1}{4}+\frac{1}{2}l_{\mathfrak{p}})x\big] \nonumber\\
   &=&  M(x-\alpha)(x-\beta)\eta(x)+\frac{1-c_{1}}{4}(\frac{1}{4}+\frac{1}{2}l_{\mathfrak{p}})(x-1)x,
\end{eqnarray}
where
\begin{equation}
  \eta(x)=\frac{x}{2}(\frac{1}{2}+l_{\mathfrak{p}}-k_{\mathfrak{p}}-\frac{c_{1}}{2})
  +\frac{1}{2}(k_{\mathfrak{p}}-h_{\mathfrak{p}}).
\end{equation}
Then equation (\ref{Einstein equ}) can be simplified as:
\begin{equation*}
  \frac{1-c_{1}}{4}(\frac{1}{4}+\frac{1}{2}l_{\mathfrak{p}})^{2}x^{2}(x-1)\Delta(x)
  = \big[M(x-\alpha)(x-\beta)\eta(x)+\frac{1-c_{1}}{4}(\frac{1}{4}+\frac{1}{2}l_{\mathfrak{p}})(x-1)x\big]^{2}.
\end{equation*}
This implies that
\begin{equation}\label{}
  M(x-\alpha)(x-\beta)\big[M(x-\alpha)(x-\beta)\eta^{2}(x)
  +\frac{1-c_{1}}{4}(\frac{1}{2}+l_{\mathfrak{p}})x(x-1)
  (\eta(x)-\frac{1}{2}(\frac{1}{4}+\frac{1}{2}l_{\mathfrak{p}})x)\big]=0.
\end{equation}
Thus to prove the theorem, it is sufficient to show that the  equation (of $x$)
\begin{equation}\label{}
  f(x)=M(x-\alpha)(x-\beta)\eta^{2}(x)
  +\frac{1-c_{1}}{4} (\frac{1}{2}+l_{\mathfrak{p}})x(x-1)[\eta(x)-
  \frac{1}{2}(\frac{1}{4}+\frac{1}{2}l_{\mathfrak{p}})x]=0,
\end{equation}
 admits a real positive solution $x\neq 1$, $\alpha$ or $\beta$. Now we prove this assertion
 case by case.

\medskip
\noindent \textbf{Case 1} \quad $h_{\mathfrak{n}}<h_{\mathfrak{u}}.$

In this case, $\Delta(1)=M(1-\alpha)(1-\beta)=\frac{1}{2}(h_{\mathfrak{n}}-h_{\mathfrak{u}})<0$,
so we can assume $0<\alpha<1<\beta$ without losing generality. Notice that
\begin{eqnarray*}
  f(0)&=&\frac{1}{4}M\alpha \beta (k_{\mathfrak{p}}-h_{\mathfrak{p}})^{2}>0, \\
  f(1) &=&\frac{1}{4}(\frac{1}{2}+l_{\mathfrak{p}}-h_{\mathfrak{p}}-\frac{c_{1}}{2})^{2}\Delta(1)<0,
  \end{eqnarray*}
  and
   $$\lim_{x\rightarrow +\infty} f(x)=+\infty.$$
 Thus there exist real numbers $z_{1}, z_{2}\in \mathds{R}$ such that $0<z_{1}<1<z_{2}$, and $f(z_{1})=f(z_{2})=0$.
Notice also that the equation
 $\eta(x)-\frac{1}{2}(\frac{1}{4}+\frac{1}{2}l_{\mathfrak{p}})x=0$ admits only one solution,
 and $f(\alpha)$ and $f(\beta)$ can not be equal to zero at the same time. Hence we have either $z_{1}\neq \alpha$ or
 $z_{2}\neq \beta$, which proves the theorem in this case.

\medskip
\noindent \textbf{Case 2} \quad $h_{\mathfrak{n}}=h_{\mathfrak{u}}$, and $G$ is simple.

 In this case, the standard metrics on $G/L$, $G/K$ and $G/H$ are Einstein, and these spaces have been classified in Section 3,  which  are listed in Table A and Table B.

 Clearly, $x=y=1$ is a solution of the system of equations \eqref{equ1}, \eqref{equ2}, and \eqref{equ3}. That is to say,  $x=1$ is a solution of \eqref{Einstein equ}, so one of $\alpha, \beta$ is equal to 1. Without losing generality, we assume
 $\alpha=1$.  Then by \eqref{M},  we have
 $\beta=\frac{1}{M}[\frac{1}{2}(k_{\mathfrak{u}}-h_{\mathfrak{u}})+\frac{1-c_{1}}{4}]
 =\frac{2k_{\mathfrak{p}}-2h_{\mathfrak{p}}+1-c_{2}}{c_{2}+2h_{\mathfrak{p}}}$.
Then can  easily deduce the fact  $\beta>1$ from the proof of Theorem 5.10 of \cite{wz85}.

 Now let
 \begin{eqnarray}\label{app B}
 % \nonumber to remove numbering (before each equation)
   \bar{f}(x) &=& M(x-\beta)\eta^{2}(x)
   +\frac{1-c_{1}}{4}(\frac{1}{2}+l_{\mathfrak{p}})x[\eta(x)-\frac{1}{2}(\frac{1}{4}+\frac{1}{2}l_{\mathfrak{p}})x] \nonumber\\
    &=&  M(x-\beta)\eta^{2}(x)
    +\frac{1-c_{1}}{8}(\frac{1}{2}+l_{\mathfrak{p}})x(\omega_{1}x+k_{\mathfrak{p}}-h_{\mathfrak{p}}),
 \end{eqnarray}
 where $\omega_{1}=\frac{1}{4}+\frac{1}{2}l_{\mathfrak{p}}-k_{\mathfrak{p}}-\frac{c_{1}}{2}$.

 Clearly, $f(x)=\bar{f}(x)(x-1)$. Thus  the theorem will follow if one can prove that, in this case,  $\bar{f}(x)=0$ admits a real positive solution $x\neq 1, \beta$ except for the three cases (I), (II) and (III).

 First, by the facts that
 $$\bar{f}(0)=-\frac{1}{4}M\beta (k_{\mathfrak{p}}-h_{\mathfrak{p}})^{2}<0,$$
 and
  $$ \lim_{x\rightarrow +\infty}\bar{f}(x)=+\infty,$$
  there exists a unique positive number $z\in \mathds{R}$ such that $\bar{f}(z)=0$.

 Note that if $\omega_{1}\geq 0$, then
 $\bar{f}(\beta)=\frac{1-c_{1}}{8}\beta (\frac{1}{2}+l_{\mathfrak{p}})
 (\omega_{1} \beta+k_{\mathfrak{p}}-h_{\mathfrak{p}})>0$, and so $z<\beta$.

  By Proposition \ref{prop4.4},  $x=z$ is a solution of equation \eqref{equ4.18}.
 Then we have $z>\delta_{0}$, where $\delta_{0}=\frac{2(k_{\mathfrak{u}}-h_{\mathfrak{u}})+c_{1}-c_{2}}{c_{2}+2h_{\mathfrak{n}}}$
 is a solution of the equation $\delta(x)=0$. Now $\delta_{0}\geq 1$ if and only if $\omega_{2}\geq 0$,
 where $\omega_{2}=2k_{\mathfrak{p}}+c_{1}-2c_{2}-4h_{\mathfrak{p}}$. In summarizing, we have
 the following facts:
 \begin{equation}\label{equ4.48}
 \left\{
   \begin{array}{ll}
     z<\beta & \mathrm{if} \quad \omega_{1}\geq 0, \\
     z>1 & \mathrm{if} \quad \omega_{2}\geq 0.
   \end{array}
 \right.
 \end{equation}
Notice that \eqref{equ4.48} is also valid when $G$ is only semisimple.

 Now by Proposition \ref{prop4.5},
 for any standard quadruple
 $(G,L,K,H)$ listed in Table A and Table B, there exists an invariant Einstein metric on $G/H$ of the form
 $g_{(x,y)}$ with $x\neq 1$, $x\neq y$,  except for the cases (a)-(f) therein.
 We will deal with the cases of (a)-(f) listed in Proposition \ref{prop4.5} in Appendix B.

Now the proof of the theorem is completed.
\end{proof}

It is clear that Theorem \ref{main2}  is the second case of this Theorem.

In particular, from Table A, we obtain some new invariant Einstein metrics on some flag manifolds $G/T$, where $G=\mathrm{SU}(n), \mathrm{SO(2n)}$, or $E_{8}$, and $T$ is a maximal compact connected abelian subgroup of $G$.
These Einstein metrics on flag manifolds are clearly neither Kahlerian \cite{besse87book} nor naturally reductive.

We should also mention that, by the above result,  the standard quadruple
$$\bigg(\mathrm{Sp}(8m(9m+1)),(9m+1)\mathrm{Sp}(8m),2(9m+1)\mathrm{Sp}(4m),4(9m+1)\mathrm{Sp}(2m)\bigg), \, m\in\mathds{N}^{+},$$
doesn't  correspond to any  new invariant Einstein metric on the homogeneous space $$\mathrm{Sp}(8m(9m+1))/4(9m+1)\mathrm{Sp}(2m).$$
However, we do find at least two new invariant Einstein metrics on the space associated to the standard quadruples
$$\bigg(\mathrm{Sp}(8m(9m+1)),2\mathrm{Sp}(4m(9m+1)),2(9m+1)\mathrm{Sp}(4m),4(9m+1)\mathrm{Sp}(2m)\bigg)$$
and
$$\bigg(\mathrm{Sp}(8m(9m+1)),2\mathrm{Sp}(4m(9m+1)),4\mathrm{Sp}(2m(9m+1)),4(9m+1)\mathrm{Sp}(2m)\bigg).$$

Finally, we give some new examples of homogeneous Einstein manifolds $G/H$ with $G$ semisimple.

\begin{thm}\label{thm4.7}
Let $G=n_{1}n_{2}n_{3}H$, $L=n_{1}n_{2}H$, $K=n_{1}H$ with $H$ compact simple, where $n_{1}, n_{2}, n_{3}\in \mathds{N}$,
and $n_{i}\geq 2$. Let $H$ be embedded into $G$ by the map
$h\mapsto (h,h,\cdots, h)$. Then
\begin{enumerate}
 \item $(G,L,K,H)$ and $(G,L,K,\{e\})$ are both standard quadruples, and the standard metrics on $G/L$, $G/K$, $G/H$ are Einstein.
\item $G/H$ admits an invariant non-naturally reductive Einstein metric of the form $g_{(x,y)}$ with $x\neq 1, x\neq y$, associated to the quadruple $(G,L,K,H)$.
\end{enumerate}
\end{thm}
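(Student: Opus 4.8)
The plan is to make the Lie-algebra data of the chain $H\subset K\subset L\subset G$ completely explicit and then to apply Definition~\ref{def3.1}, Lemma~\ref{lem3.2} and Theorem~\ref{main}. Write $N=n_{1}n_{2}n_{3}$, so that $\mathfrak{g}$ is the orthogonal direct sum of $N$ copies of the compact simple Lie algebra $\mathfrak{h}$, and $B=B_{\mathfrak g}$ the sum of the corresponding forms $B_{\mathfrak h}$. Under the embeddings of the statement, $\mathfrak l$ is a direct sum of $n_{1}n_{2}$ copies of $\mathfrak h$ (one diagonal $\mathfrak h$ in each block of $n_{3}$ slots), $\mathfrak k$ a direct sum of $n_{1}$ copies of $\mathfrak h$ (one diagonal $\mathfrak h$ in each block of $n_{2}n_{3}$ slots), and $\mathfrak h$ the full diagonal of all $N$ slots. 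The single computational input I would use throughout is the elementary identity that the Casimir operator of the adjoint representation of a compact simple Lie algebra, taken with respect to its own negative Killing form, equals $\mathrm{id}$.

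\emph{Part (1).} First I would read off the normalizations. Since a vector of $\mathfrak l$ is repeated $n_{3}$ times inside $\mathfrak g$, one gets $B|_{\bar{\mathfrak l}}=n_{3}B_{\bar{\mathfrak l}}$, hence $c_{1}=1/n_{3}$; likewise $c_{2}=1/(n_{2}n_{3})$ and $B_{\mathfrak h}=\frac1{n_{1}n_{2}n_{3}}B|_{\mathfrak h}$. Next, each of $\mathfrak n,\mathfrak u,\mathfrak p$ is, as a module over the subalgebra acting on it through a diagonal embedding, a direct sum of copies of the adjoint representation of $\mathfrak h$; combined with the normalizations and the Casimir identity this shows $C_{\mathfrak h,\mathfrak n}$, $C_{\mathfrak h,\mathfrak u}$, $C_{\mathfrak h,\mathfrak p}$, $C_{\mathfrak k,\mathfrak u}$, $C_{\mathfrak k,\mathfrak p}$, $C_{\mathfrak l,\mathfrak p}$ are all scalar, with eigenvalues $\tfrac1{n_{1}n_{2}n_{3}}$, $\tfrac1{n_{2}n_{3}}$, $\tfrac1{n_{3}}$ respectively, as predicted by Lemma~\ref{lem3.2}. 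So conditions (2) and (3) of Definition~\ref{def3.1} hold and $(G,L,K,H)$, $(G,L,K,\{e\})$ are basic quadruples (for $H$ the diagonal, $G$ acts on $G/H$ only almost effectively, with kernel $Z(H)_{\mathrm{diag}}$; one passes to the effective quotient, which alters none of these data). Since the full isotropy Casimirs of $G/H$, $G/K$, $G/L$ are the scalars just found, the criterion of Wang and Ziller recalled in Section~2 (the standard metric is Einstein iff the isotropy Casimir is scalar) shows the three standard metrics are Einstein, while on $G$ itself $g_{B}$ is bi-invariant, hence Einstein; so both quadruples are standard.

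\emph{Part (2), existence.} Here $h_{\mathfrak n}=h_{\mathfrak u}$ and $k_{\mathfrak u}=k_{\mathfrak p}$, so I am in the situation of Theorem~\ref{main}, Case~2, whose proof is valid for semisimple $G$ provided $\beta>1$, $\omega_{1}\ge0$ and $\omega_{2}\ge0$ (the remark after \eqref{equ4.48}). Substituting the constants: $\omega_{1}=\tfrac14-\tfrac1{n_{2}n_{3}}\ge0$ since $n_{2}n_{3}\ge4$; $\omega_{2}=\dfrac{n_{1}n_{2}-4}{n_{1}n_{2}n_{3}}\ge0$ since $n_{1}n_{2}\ge4$; $k_{\mathfrak p}>h_{\mathfrak p}$ since $n_{1}\ge2$; $\beta=\dfrac{n_{1}n_{2}n_{3}+n_{1}-2}{n_{1}+2}>1$ since $n_{1}n_{2}n_{3}\ge8$; and $c_{2}<c_{1}<1$. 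Then $\bar f(0)<0$, $\lim_{x\to+\infty}\bar f(x)=+\infty$, and $\bar f(\beta)>0$, and the inequalities \eqref{equ4.48} (valid here because $G$ is semisimple) together with $z\ne\delta_{0}$ (since $z=\delta_{0}$ would give $Mz^{2}=(c_{2}-c_{1})/4<0$) place the relevant root $z$ of $\bar f$ in $(1,\beta)$. By Proposition~\ref{prop4.4}(2) the metric $g_{(z,y_{z})}$ with $y_{z}=z\sqrt{(1-c_{1})/(4\delta(z))}$ is then Einstein, $z\ne1$, and $z\ne y_{z}$ because $x=y$ in \eqref{x=y} holds only for $x\in\{1,\beta\}$.

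\emph{Part (2), non-natural reductivity, and the main difficulty.} It remains to show $g_{(z,y_{z})}$ is not naturally reductive. The structural point is that $\mathfrak k$ is not an ideal of $\mathfrak l$ --- the diagonal of $n_{1}$ copies of $\mathfrak h$ inside $n_{1}n_{2}$ copies fails to be an ideal since $n_{2}\ge2$ (equivalently $[\mathfrak p,\mathfrak p]_{\mathfrak u}\ne0$) --- so, together with $z\ne1$ and $z\ne y_{z}$, this is precisely the configuration excluded by the reasoning behind the lemma at the start of Section~5 (for $H=\{e\}$ and $G$ simple, $g_{(x,y)}$ is naturally reductive iff $x=y$, $x=1$, or $\mathfrak k$ is an ideal in $\mathfrak l$), which rests on Theorem~\ref{nat-thm}. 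I expect this to be the one genuinely delicate step: the D'Atri--Ziller description behind Theorem~\ref{nat-thm} concerns left invariant metrics on a compact \emph{simple} group, so to exclude natural reductivity of $g_{(z,y_{z})}$ with respect to \emph{every} transitive group of isometries one has to either establish the coset analogue of that lemma for $G/H$ or argue directly about the full isometry group of $(G/H,g_{(z,y_{z})})$. Part~(1) and the existence part of~(2), by contrast, are essentially bookkeeping once the adjoint-Casimir identity, Lemma~\ref{lem3.2} and Proposition~\ref{prop4.4} are in hand.
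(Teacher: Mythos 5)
Your computation matches the paper's: the published proof of Theorem \ref{thm4.7} consists precisely of citing Proposition 5.5 of Wang--Ziller for assertion (1), recording $c_{1}=1/n_{3}$, $c_{2}=1/(n_{2}n_{3})$, $l_{\mathfrak p}=1/n_{3}$, $k_{\mathfrak p}=1/(n_{2}n_{3})$, $h_{\mathfrak p}=1/(n_{1}n_{2}n_{3})$, checking $\omega_{1}=\frac14-\frac{1}{n_{2}n_{3}}\ge 0$ and $\omega_{2}=\frac{1}{n_{3}}-\frac{4}{n_{1}n_{2}n_{3}}\ge 0$, and invoking \eqref{equ4.48}. Your version of (1) rederives the Wang--Ziller input from the adjoint-Casimir identity, which is more self-contained but equivalent; your explicit values of $\beta$, $\delta_{0}$ and the localization $1<z<\beta$ are simply the interior of Case 2 of Theorem \ref{main} carried out by hand, and all the numbers check out. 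The one place you and the paper genuinely part ways is the issue you flag at the end: the paper's proof of this theorem is silent on natural reductivity, and the only statement in the paper that addresses it (the lemma opening Section 4, which rests on Theorem \ref{nat-thm}) is formulated for $G$ simple and $H=\{e\}$, so it does not literally apply to this semisimple coset situation. Your caution is therefore warranted, but it identifies a gap in the paper's own exposition rather than a defect peculiar to your argument; closing it requires either the semisimple-group version of the D'Atri--Ziller classification or a direct analysis of the full isometry group of $(G/H,g_{(z,y_{z})})$, neither of which the paper supplies.
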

\begin{proof}
The first assertion follows from Proposition 5.5 of \cite{wz85}. For the basic quadruple  $(G,L,K,H)$, one has
\begin{gather*}
  c_{1}=\frac{1}{n_{3}}, \quad c_{2}=\frac{1}{n_{2}n_{3}}, \\
  l_{\mathfrak{p}}=\frac{\dim L}{\dim G/L}(1-c_{1})=\frac{1}{n_{3}},\\
  k_{\mathfrak{p}}=\frac{1}{n_{2}n_{3}},\quad h_{\mathfrak{p}}=\frac{1}{n_{1}n_{2}n_{3}}.
\end{gather*}
Then we have
\begin{gather*}
  \omega_{1}=\frac{1}{4}+\frac{1}{2}l_{\mathfrak{p}}-k_{\mathfrak{p}}-\frac{c_{1}}{2}
  =\frac{1}{4}-\frac{1}{n_{2}n_{3}}\geq 0,\\
\omega_{2}=2k_{\mathfrak{p}}+c_{1}-2c_{2}-4h_{\mathfrak{p}}
=\frac{1}{n_{3}}-\frac{4}{n_{1}n_{2}n_{3}}\geq 0.
\end{gather*}
Now the second assertion follows from \eqref{equ4.48} of Theorem \ref{main}.
\end{proof}

Now we can prove
\begin{thm}
Let $H$ be a compact simple Lie group, and $G=H\times H\times \cdots \times H$ ($n$ times, $n\geq 2$), where
 $n=p_{1}^{l_{1}}p_{2}^{l_{2}}\cdots p_{s}^{l_{s}}$, with
$p_{i}$ prime, and $p_{i}\neq p_{j}$, $i\neq j$.
 Then  $G$  admits at least $(l_{1}+1)(l_{2}+1)\cdots (l_{s}+1)-2$ non-equivalent  non-naturally reductive Einstein metrics.
\end{thm}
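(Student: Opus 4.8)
The plan is to attach to every divisor $d$ of $n$ with $1<d<n$ a basic quadruple $(G,L_d,K_d,\{e\})$ of the type treated in Section~4, to extract from it an invariant Einstein metric $g^{(d)}$ on $G$, and then to check that the $g^{(d)}$ are non-naturally reductive and pairwise non-equivalent; since the number of divisors $d$ with $1<d<n$ equals $(l_1+1)\cdots(l_s+1)-2$, this yields the stated bound. For $n$ prime there is nothing to prove, so we may assume $n$ composite, hence $n\ge 4$.

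First I would fix such a $d$ and partition the $n$ simple factors of $G=H\times\cdots\times H$ into $d$ blocks of $n/d$ factors. Let $L_d$ be the product over the blocks of the diagonal copy of $H$ in each block, so $L_d\cong H^{d}$ and $L_d\subsetneqq G$ (since $n/d\ge 2$), and let $K_d\subset L_d$ be the full diagonal copy of $H$ in $G$, so $K_d\cong H$ and $K_d\subsetneqq L_d$ (since $d\ge 2$); thus $\{e\}\subsetneqq K_d\subsetneqq L_d\subsetneqq G$. I would then verify that $(G,L_d,K_d,\{e\})$ is a basic quadruple in the sense of Definition~\ref{def3.1}: the relations $B_{\mathfrak{l}_d}=c_1 B|_{\mathfrak{l}_d}$ and $B_{\mathfrak{k}_d}=c_2 B|_{\mathfrak{k}_d}$ hold because $\mathfrak{l}_d$ and $\mathfrak{k}_d$ are orthogonal sums of diagonally embedded copies of the simple algebra $\mathfrak{h}$, and an elementary computation with these diagonal embeddings gives
$$c_1=l_{\mathfrak{p}}=\frac dn,\qquad c_2=k_{\mathfrak{u}}=k_{\mathfrak{p}}=\frac1n;$$
the Casimir conditions of Definition~\ref{def3.1}(3) hold because on each simple summand the relevant module is a multiple of the adjoint module of $\mathfrak{h}$, whose Casimir operator relative to the Killing form is the identity, while $\mathfrak{h}=\{0\}$ forces $h_{\mathfrak{n}}=h_{\mathfrak{u}}=h_{\mathfrak{p}}=0$. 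In particular $k_{\mathfrak{u}}=k_{\mathfrak{p}}$ and $h_{\mathfrak{n}}=h_{\mathfrak{u}}$, so $(G,L_d,K_d,\{e\})$ is of the type to which Theorem~\ref{main} applies.

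Next I would extract the metric. With the values above,
$$\omega_1=\frac14+\frac12 l_{\mathfrak{p}}-k_{\mathfrak{p}}-\frac{c_1}{2}=\frac14-\frac1n\ge 0,\qquad \omega_2=2k_{\mathfrak{p}}+c_1-2c_2-4h_{\mathfrak{p}}=\frac dn>0,$$
and, writing $\Delta(x)-\frac{1-c_1}{4}(x-1)=M(x-1)(x-\beta)$ as in \eqref{M} (here $x=1$ is automatically a root since $h_{\mathfrak{n}}=h_{\mathfrak{u}}$), one computes $\beta=n+1>1$. Then I would run the Case~2 argument of the proof of Theorem~\ref{main}: the auxiliary function $\bar f$ of \eqref{app B} has $\bar f(0)<0$ and $\bar f(x)\to+\infty$, hence a positive root $z$; by \eqref{equ4.48} one gets $z<\beta$ (since $\omega_1\ge0$) and $z>1$ (since $\omega_2>0$), so $z\ne1$, $z\ne\beta$, and then $z\ne y$ because $g_{(x,y)}$ has $x=y$ only for $x\in\{1,\beta\}$; here $y=z\sqrt{(1-c_1)/(4\delta(z))}>0$. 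This step uses Theorem~\ref{main} only through \eqref{equ4.48}, which remains valid for semisimple $G$ by the remark following it, and it is the same argument already used to prove Theorem~\ref{thm4.7}. Thus $g^{(d)}:=g_{(z,y)}$ is an invariant Einstein metric on $G$ with $z\ne1$, $z\ne y$; and since in addition the diagonal subalgebra $\mathfrak{k}_d$ is not an ideal of $\mathfrak{l}_d$ (for instance $[\mathfrak{h}^{(1)},\mathfrak{k}_d]=\mathfrak{h}^{(1)}\not\subset\mathfrak{k}_d$ for a simple summand $\mathfrak{h}^{(1)}$ of $\mathfrak{l}_d$), the natural-reductivity criterion for metrics $g_{(x,y)}$ together with D'Atri and Ziller's description of naturally reductive left invariant metrics (Theorem~\ref{nat-thm} and \cite{dz79}) shows that $g^{(d)}$ is non-naturally reductive.

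Finally, for $d\ne d'$ the metrics $g^{(d)}$ and $g^{(d')}$ are non-equivalent: by the isometry criterion for the spaces $(G,g_{(x,y)})$ any isometry between them would be induced by an automorphism $\varphi$ of $G$ with $\varphi(L_d)=L_{d'}$, which is impossible because $\varphi$ preserves dimension while $\dim L_d=d\dim\mathfrak{h}\ne d'\dim\mathfrak{h}=\dim L_{d'}$. Hence $G$ admits at least $(l_1+1)\cdots(l_s+1)-2$ pairwise non-equivalent non-naturally reductive Einstein metrics. The one point requiring care is the passage of Theorem~\ref{main} and of the natural-reductivity criterion from the simple case, in which they are stated, to our semisimple $G$; this is supplied by the explicit values of $c_1,c_2,l_{\mathfrak{p}},k_{\mathfrak{p}},\beta$ above and by the validity of \eqref{equ4.48} for semisimple groups, after which the argument is routine checking together with the bookkeeping of divisors.
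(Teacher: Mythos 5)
Your proposal is correct and follows essentially the same route as the paper: for each divisor $d$ of $n$ with $1<d<n$ (equivalently, each factorization $n=pq$ with $p,q\ge 2$) you form the quadruple $(G,dH,H,\{e\})$, compute $c_{1}=l_{\mathfrak{p}}=d/n$, $c_{2}=k_{\mathfrak{p}}=1/n$, hence $\omega_{1}=\tfrac14-\tfrac1n\ge 0$ and $\omega_{2}=d/n>0$, and invoke \eqref{equ4.48} to produce the Einstein metric $g_{(x,y)}$ with $x\neq 1$, $x\neq y$. Your added details on non-natural reductivity (via $\mathfrak{k}_d$ not being an ideal of $\mathfrak{l}_d$) and on pairwise non-equivalence (via $\dim L_{d}\neq\dim L_{d'}$) only make explicit what the paper leaves implicit.
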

\begin{proof}
 Given an integer pair $(p,q)$,  denote $n=pq, p,q \geq 2$, and let $L=pH$, $K=H$.  Then $(G,L,K,\{e\})$ is a basic quadruple, and by Theorem \ref{thm4.7},
the standard metrics on $G/L$, $G/K$ are  Einstein.
For the basic quadruple $(G,L,K,\{e\})$, we have
\begin{gather*}
  c_{1}=\frac{1}{q}, \quad c_{2}=\frac{1}{pq}, \\
  l_{\mathfrak{p}}=\frac{\dim L}{\dim G/L}(1-c_{1})=\frac{1}{q},\quad k_{\mathfrak{p}}=\frac{1}{pq}.
\end{gather*}
Then
\begin{gather*}
  \omega_{1}=\frac{1}{4}+\frac{1}{2}l_{\mathfrak{p}}-k_{\mathfrak{p}}-\frac{c_{1}}{2}
  =\frac{1}{4}-\frac{1}{pq}\geq 0,\\
\omega_{2}=2k_{\mathfrak{p}}+c_{1}-2c_{2}-4h_{\mathfrak{p}}
=\frac{1}{q}\geq 0.
\end{gather*}
Thus by \eqref{equ4.48} of Theorem \ref{main},
 $G$ admits a left invariant Einstein metric of the form $g_{(x,y)}$ with $x\neq 1, x\neq y$, associated to $(G,L,K,\{e\})$, which is not naturally reductive.  This completes the proof of the theorem.
\end{proof}

To the best knowledge of the authors, the Einstein metrics on compact semisimple Lie groups described on the above theorem are the first
known examples of  non-naturally reductive Einstein metrics which are not a product of Einstein metrics.

\appendix

\section{The related quantities in the Proof of Proposition 5.1}

In this appendix, we list the calculations of the related quantities in the proof of Proposition 5.1.  This will be described  case by case below.

\medskip
\noindent \textbf{Type A. 1}:
$\mathfrak{su}(n_{1}n_{2}n_{3}k)\supset s(n_{1}\mathfrak{u}(n_{2}n_{3}k))
  \supset  s(n_{1}n_{2}\mathfrak{u}(n_{3}k))\supset  s(n_{1}n_{2}n_{3}\mathfrak{u}(k))$,\quad $ k\geq1, n_{i}\geq 2$.

In this case, we have
\begin{gather*}
   c_{1}=\frac{1}{n_{1}},\quad c_{2}=\frac{1}{n_{1}n_{2}}.
\end{gather*}
Then by Lemma \ref{lem3.2}, we have
\begin{equation*}
  l_{\mathfrak{p}}=\frac{1}{\dim G/L}[n_{1}\dim \mathfrak{su}(n_{2}n_{3}k)(1-c_{1})+(n_{1}-1)]=\frac{1}{n_{1}},
\end{equation*}
and similarly
\begin{equation*}
  k_{\mathfrak{p}}=\frac{1}{n_{1}n_{2}}, \quad h_{\mathfrak{p}}=\frac{1}{n_{1}n_{2}n_{3}}.
\end{equation*}
Therefore we have
\begin{gather*}
  \omega_{1}=\frac{1}{4}+\frac{1}{2}l_{\mathfrak{p}}-k_{\mathfrak{p}}-\frac{c_{1}}{2}
  =\frac{1}{4}-\frac{1}{n_{1}n_{2}}\geq0,\\
   \omega_{2}=2k_{\mathfrak{p}}+c_{1}-2c_{2}-4h_{\mathfrak{p}}
   =\frac{1}{n_{1}}-\frac{4}{n_{1}n_{2}n_{3}}\geq0.
\end{gather*}

\medskip
\noindent\textbf{Type A. 2}:
$\mathfrak{so}(n_{1}n_{2}n_{3}k)\supset n_{1}\mathfrak{so}(n_{2}n_{3}k)
  \supset  n_{1}n_{2}\mathfrak{so}(n_{3}k)\supset  n_{1}n_{2}n_{3}\mathfrak{so}(k)$, \quad $k\geq2, n_{i}\geq2$.

In this case,
we have
\begin{gather*}
  c_{1}=\frac{n_{2}n_{3}k-2}{n_{1}n_{2}n_{3}k-2}, \quad c_{2}=\frac{n_{3}k-2}{n_{1}n_{2}n_{3}k-2}.
\end{gather*}
Then
\begin{eqnarray*}
% \nonumber to remove numbering (before each equation)
  l_{\mathfrak{p}} &=& \frac{\dim L}{\dim G/L}(1-c_{1}) \\
   &=& \frac{n_{1}n_{2}n_{3}k(n_{2}n_{3}k-1)}{n_{1}n_{2}n_{3}k(n_{1}n_{2}n_{3}k-1)-n_{1}n_{2}n_{3}k(n_{2}n_{3}k-1)}
   (1-\frac{n_{2}n_{3}k-2}{n_{1}n_{2}n_{3}k-2}) \\
   &=&  \frac{n_{2}n_{3}k-1}{n_{1}n_{2}n_{3}k-2},
\end{eqnarray*}
and similarly
\begin{gather*}
   k_{\mathfrak{p}}=\frac{n_{3}k-1}{n_{1}n_{2}n_{3}k-2},\quad
   h_{\mathfrak{p}}=\frac{k-1}{n_{1}n_{2}n_{3}k-2}.
\end{gather*}
Therefore we have
\begin{gather*}
 \omega_{1}=\frac{1}{4}+\frac{1}{2}l_{\mathfrak{p}}-k_{\mathfrak{p}}-\frac{c_{1}}{2}
 =\frac{n_{1}n_{2}n_{3}k-4n_{3}k+4}{4(n_{1}n_{2}n_{3}k-2)}>0, \\ \omega_{2}=2k_{\mathfrak{p}}+c_{1}-2c_{2}-4h_{\mathfrak{p}}=\frac{n_{2}n_{3}k-4k+4}{n_{1}n_{2}n_{3}k-2}>0.
\end{gather*}

\medskip
\noindent\textbf{Type A. 3}:
$\mathfrak{so}(n_{1}n_{2}k)\supset n_{1}\mathfrak{so}(n_{2}k)
  \supset  n_{1}n_{2}\mathfrak{so}(k)\supset \oplus_{i=1}^{l}\mathfrak{h}_{i}$,  \quad $k\geq3, n_{i}\geq2$.

In this case, we have
\begin{equation*}
  c_{1}=\frac{n_{2}k-2}{n_{1}n_{2}k-2}, \quad c_{2}=\frac{k-2}{n_{1}n_{2}k-2},
  \quad l_{\mathfrak{p}}=\frac{n_{2}k-1}{n_{1}n_{2}k-2}.
\end{equation*}
Therefore
\begin{equation*}
 \omega_{1}=\frac{n_{1}n_{2}k-4k+4}{4(n_{1}n_{2}k-2)}>0.
\end{equation*}
Moreover,  by \eqref{4.41}, we have
\begin{eqnarray*}
% \nonumber to remove numbering (before each equation)
  \omega_{2} &=& \frac{\dim G+\dim L-4\dim H}{\dim L}l_{\mathfrak{p}}-1 \\
   &> & \frac{\dim G+\dim L-2\dim K}{\dim L}l_{\mathfrak{p}}-1 \\
   &=&  \frac{n_{1}n_{2}k(n_{1}n_{2}k-1)+n_{1}n_{2}k(n_{2}k-1)-2n_{1}n_{2}k(k-1)}{n_{1}n_{2}k(n_{2}k-1)}\times
       \frac{n_{2}k-1}{n_{1}n_{2}k-2} -1\\
   &=&  \frac{n_{2}k-2k+2}{n_{1}n_{2}k-2} \\
   &>&0,
\end{eqnarray*}
since $\dim H<\frac{1}{2}\dim K$.

\medskip
\noindent \textbf{Type A. 4}:
$\mathfrak{sp}(n_{1}n_{2}n_{3}k)\supset n_{1}\mathfrak{sp}(n_{2}n_{3}k)
  \supset  n_{1}n_{2}\mathfrak{sp}(n_{3}k)\supset  n_{1}n_{2}n_{3}\mathfrak{sp}(k)$, \quad $k\geq1, n_{i}\geq 2$.

In this case, we have
\begin{equation*}
   c_{1}=\frac{n_{2}n_{3}k+1}{n_{1}n_{2}n_{3}k+1},\quad c_{2}=\frac{n_{3}k+1}{n_{1}n_{2}n_{3}k+1}.
\end{equation*}
Thus
\begin{eqnarray*}
% \nonumber to remove numbering (before each equation)
  l_{\mathfrak{p}} &=& \frac{\dim L}{\dim G/L}(1-c_{1}) \\
   &=& \frac{n_{1}n_{2}n_{3}k(2n_{2}n_{3}k+1)}{n_{1}n_{2}n_{3}k(2n_{1}n_{2}n_{3}k+1)-n_{1}n_{2}n_{3}k(2n_{2}n_{3}k+1)}
   (1-\frac{n_{2}n_{3}k+1}{n_{1}n_{2}n_{3}k+1}) \\
   &=&  \frac{2n_{2}n_{3}k+1}{2(n_{1}n_{2}n_{3}k+1)},
\end{eqnarray*}
and similarly
\begin{gather*}
  k_{\mathfrak{p}}=\frac{2n_{3}k+1}{2(n_{1}n_{2}n_{3}k+1)},\quad
  h_{\mathfrak{p}}=\frac{2k+1}{2(n_{1}n_{2}n_{3}k+1)}.
\end{gather*}
Therefore
\begin{gather*}
  \omega_{1}=\frac{1}{4}+\frac{1}{2}l_{\mathfrak{p}}-k_{\mathfrak{p}}-\frac{c_{1}}{2}
  =\frac{n_{1}n_{2}n_{3}k-4n_{3}k-2}{4(n_{1}n_{2}n_{3}k+1)},\\
  \omega_{2}=2k_{\mathfrak{p}}+c_{1}-2c_{2}-4h_{\mathfrak{p}}=\frac{n_{2}n_{3}k-4k-2}{n_{1}n_{2}n_{3}k+1}.
\end{gather*}
It follows that $\omega_{1}<0$ if and only if $n_{1}=n_{2}=2$, $\omega_{2}<0$, if and only if $n_{2}=n_{3}=2$.

\medskip
\noindent\textbf{Type A. 5}: $
  \mathfrak{e}_{6}\supset \mathfrak{so}(10)\oplus \mathds{R}
  \supset \mathfrak{so}(8)\oplus \mathds{R}^{2}\supset \mathds{R}^{6}$.

Note that $\mathfrak{so}(10)$ and $\mathfrak{so}(8)$ are regular subalgebras of $\mathfrak{e}_{6}$, hence we have $c_{1}=\frac{2}{3}$, $c_{2}=\frac{1}{2}$.
Since $G/L$ and $\bar{L}/\bar{K}$ are both symmetric, we have $l_{\mathfrak{p}}=\frac{1}{2}$,
$k_{\mathfrak{p}}=\frac{1}{3}$, and $h_{\mathfrak{p}}=\frac{1}{12}$.
Thus
\begin{gather*}
  \omega_{1}=\frac{1}{4}+\frac{1}{2}l_{\mathfrak{p}}-k_{\mathfrak{p}}-\frac{c_{1}}{2}
  =\frac{1}{4}+\frac{1}{2}\times \frac{1}{2}-\frac{1}{3}-\frac{1}{2}\times \frac{2}{3}=-\frac{1}{6}, \\
  \omega_{2}=2k_{\mathfrak{p}}+c_{1}-2c_{2}-4h_{\mathfrak{p}}
  =2\times \frac{1}{3}+\frac{2}{3}-2\times \frac{1}{2}-4\times \frac{1}{12}=0.
\end{gather*}

\medskip
\noindent \textbf{Type A. 6}:
$ \mathfrak{e}_{7}\supset \mathfrak{so}(12)\oplus \mathfrak{su}(2)
  \supset \mathfrak{so}(8)\oplus 3\mathfrak{su}(2)\supset 7\mathfrak{su}(2)$.

Note that $\mathfrak{so}(12)$, $\mathfrak{so}(8)$ and $7\mathfrak{su}(2)$ are regular subalgebras of $\mathfrak{e}_{7}$,
hence we have $c_{1}=\frac{5}{9}$, $c_{2}=\frac{1}{3}$, and $B_{\mathfrak{su}(2)}=\frac{1}{9}B|_{\mathfrak{su}(2)}$.

Since $G/L$ and $\bar{L}/\bar{K}$ are both symmetric, we have $l_{\mathfrak{p}}=\frac{1}{2}$,
$k_{\mathfrak{p}}=\frac{5}{18}$, and $h_{\mathfrak{p}}=\frac{1}{6}$. Therefore
\begin{gather*}
  \omega_{1}=\frac{1}{4}+\frac{1}{2}l_{\mathfrak{p}}-k_{\mathfrak{p}}-\frac{c_{1}}{2}
  =\frac{1}{4}+\frac{1}{2}\times \frac{1}{2}-\frac{5}{18}-\frac{1}{2}\times \frac{5}{9}=-\frac{1}{18}, \\
  \omega_{2}=2k_{\mathfrak{p}}+c_{1}-2c_{2}-4h_{\mathfrak{p}}
  =2\times \frac{5}{18}+\frac{5}{9}-2\times \frac{1}{3}-4\times \frac{1}{6}=-\frac{2}{9}.
\end{gather*}

\medskip
\noindent\textbf{Type A. 9}:
$\mathfrak{e}_{8}\supset 2\mathfrak{so}(8)
  \supset 8\mathfrak{su}(2)\supset \mathds{R}^{8}$.

Note that $2\mathfrak{so}(8)$ and $8\mathfrak{su}(8)$ are regular subalgebras of $\mathfrak{e}_{8}$, hence we have
\begin{equation*}
  c_{1}=\frac{1}{5},\quad l_{\mathfrak{p}}=\frac{7}{30}.
\end{equation*}
It follows that
\begin{eqnarray*}
% \nonumber to remove numbering (before each equation)
  \omega_{1} &=& -\frac{1}{4}+  \frac{\dim G-2\dim K}{2\dim L}l_{\mathfrak{p}} \\
   &=& -\frac{1}{4}+ \frac{248-2\times 24}{2\times 56}\times \frac{7}{30} \\
   &=&  \frac{1}{6},
\end{eqnarray*}
and
\begin{eqnarray*}
% \nonumber to remove numbering (before each equation)
  \omega_{2} &=& \frac{\dim G+\dim L-4\dim H}{\dim L}l_{\mathfrak{p}}-1 \\
   &=&  \frac{248+56-4\times 8}{ 56}\times \frac{7}{30}-1 \\
   &=&  \frac{2}{15}.
\end{eqnarray*}

\medskip
Now we deal with the cases of Type B.
Notice that for any standard quadruple $(G,L,K,H)$ listed in Table B with $H=\{e\}$, one has
$h_{\mathfrak{n}}=h_{\mathfrak{u}}=h_{\mathfrak{p}}=0$.

\medskip
\noindent\textbf{Type B. 1}:
$\mathfrak{so}(n_{1}n_{2}k)\supset n_{1}\mathfrak{so}(n_{2}k)
  \supset  n_{1}n_{2}\mathfrak{so}(k), \quad k\geq3, n_{i}\geq 2$.

In this case, we have
\begin{gather*}
  c_{1}=\frac{n_{2}k-2}{n_{1}n_{2}k-2},\quad c_{2}=\frac{k-2}{n_{1}n_{2}k-2}, \\
  l_{\mathfrak{p}}=\frac{n_{2}k-1}{n_{1}n_{2}k-2},\quad k_{\mathfrak{p}}=\frac{k-1}{n_{1}n_{2}k-2}.
\end{gather*}
It follows that
$$
  \omega_{1}=\frac{1}{4}+\frac{1}{2}l_{\mathfrak{p}}-k_{\mathfrak{p}}-\frac{c_{1}}{2}
  =\frac{n_{1}n_{2}k-4k+4}{4(n_{1}n_{2}k-2)}>0, $$
  and
  $$\omega_{2}=2k_{\mathfrak{p}}+c_{1}-2c_{2}-4h_{\mathfrak{p}}=\frac{n_{2}k}{n_{1}n_{2}k-2}.$$

\medskip
\noindent \textbf{Type B. 2}: $
  \mathfrak{so}(nk)\supset n\mathfrak{so}(k)
  \supset \oplus_{i=1}^{l}\mathfrak{h}_{i}$, \quad $k\geq3, n\geq 2$.

It is easily seen that
\begin{equation*}
  c_{1}=\frac{k-2}{nk-2},\quad l_{\mathfrak{p}}=\frac{k-1}{nk-2}.
\end{equation*}
since $\dim K<\frac{1}{2}\dim L$, we have
\begin{eqnarray*}
% \nonumber to remove numbering (before each equation)
  \omega_{1} &=&  -\frac{1}{4}+  \frac{\dim G-2\dim K}{2\dim L}l_{\mathfrak{p}} \\
   &>& -\frac{1}{4}+  \frac{\dim G-\dim L}{2\dim L}l_{\mathfrak{p}} \\
   &=& -\frac{1}{4}+\frac{nk(nk-1)-nk(k-1)}{2nk(k-1)}\times \frac{k-1}{nk-2}\\
   &=&\frac{nk-2k+2}{4(nk-2)}\\
   &>&0.
\end{eqnarray*}
On the other hand, we have
\begin{eqnarray*}
% \nonumber to remove numbering (before each equation)
  \omega_{2} &=& \frac{\dim G+\dim L-4\dim H}{\dim L}l_{\mathfrak{p}}-1 \\
   &=&  \frac{nk(nk-1)+nk(k-1)}{nk(k-1)}\times \frac{k-1}{nk-2}-1 \\
   &=&  \frac{k}{nk-2}.
\end{eqnarray*}

\medskip
\noindent\textbf{Type B. 3}:
$
  \mathfrak{sp}(n_{1}n_{2}k)\supset n_{1}\mathfrak{sp}(n_{2}k)
  \supset  n_{1}n_{2}\mathfrak{sp}(k)$, \quad $k\geq1, n_{i}\geq2$.

In this case, we have
\begin{gather*}
  c_{1}=\frac{n_{2}k+1}{n_{1}n_{2}k+1},\quad c_{2}=\frac{k+1}{n_{1}n_{2}k+1}, \\
  l_{\mathfrak{p}}=\frac{2n_{2}k+1}{2(n_{1}n_{2}k+1)}, \quad
  k_{\mathfrak{p}}=\frac{2k+1}{2(n_{1}n_{2}k+1)}.
\end{gather*}
It follows that
\begin{eqnarray*}
  % \nonumber to remove numbering (before each equation)
  \omega_{1} &=& \frac{1}{4}+\frac{1}{2}l_{\mathfrak{p}}-k_{\mathfrak{p}}-\frac{c_{1}}{2} \\
   &=& \frac{1}{4}+\frac{1}{2}\times \frac{2n_{2}k+1}{2(n_{1}n_{2}k+1)}
   -\frac{2k+1}{2(n_{1}n_{2}k+1)}-\frac{1}{2}\times\frac{n_{2}k+1}{n_{1}n_{2}k+1} \\
   &=&  \frac{n_{1}n_{2}k-4k-2}{4(n_{1}n_{2}k+1)},
\end{eqnarray*}
and
\begin{eqnarray*}
  % \nonumber to remove numbering (before each equation)
  \omega_{2} &=& 2k_{\mathfrak{p}}+c_{1}-2c_{2}-4h_{\mathfrak{p}} \\
   &=&  2\times\frac{2k+1}{2(n_{1}n_{2}k+1)}+\frac{n_{2}k+1}{n_{1}n_{2}k+1}
   -2\times\frac{k+1}{n_{1}n_{2}k+1}\\
   &=&  \frac{n_{2}k}{n_{1}n_{2}k+1}\\
   &>&0.
  \end{eqnarray*}
It is easily seen that $\omega_{1}<0$ if and only if $n_{1}=n_{2}=2$.

\medskip
\noindent \textbf{Type B. 4}: $  \mathfrak{so}(8)\supset \mathfrak{so}(7)\supset \mathfrak{g}_{2}$.

Since $\mathrm{SO}(8)/\mathrm{SO}(7)$ is symmetric, we have $l_{\mathfrak{p}}=\frac{1}{2}$.
By \eqref{omega1} and \eqref{omega2}, we have
\begin{eqnarray*}
% \nonumber to remove numbering (before each equation)
  \omega_{1} &=& \frac{1}{4\dim L}(\dim G-\dim L-2\dim K) \\
   &=& \frac{1}{4\times 21}(28-21-2\times 14) \\
   &=&  -\frac{1}{4},
\end{eqnarray*}
and
\begin{eqnarray*}
% \nonumber to remove numbering (before each equation)
  \omega_{2} &=& \frac{1}{2\dim L}(\dim G-\dim L) \\
   &=& \frac{1}{2\times 21}(28-21) \\
   &=&  \frac{1}{6}.
\end{eqnarray*}

\medskip
\noindent \textbf{Type B. 5}:
  $\mathfrak{f}_{4}\supset \mathfrak{so}(9)\supset \mathfrak{so}(8)$.

Since $F_{4}/\mathrm{SO}(9)$ is symmetric, we have $l_{\mathfrak{p}}=\frac{1}{2}$.
By \eqref{omega1} and \eqref{omega2}, we have
\begin{eqnarray*}
% \nonumber to remove numbering (before each equation)
  \omega_{1} &=& \frac{1}{4\dim L}(\dim G-\dim L-2\dim K) \\
   &=& \frac{1}{4\times 36}(52-36-2\times 28) \\
   &=&  -\frac{5}{18},
\end{eqnarray*}
and
\begin{eqnarray*}
% \nonumber to remove numbering (before each equation)
  \omega_{2} &=& \frac{1}{2\dim L}(\dim G-\dim L) \\
   &=& \frac{1}{2\times 36}(52-36) \\
   &=&  \frac{2}{9}.
\end{eqnarray*}

\medskip
\noindent\textbf{Type B. 6}:
$
  \mathfrak{e}_{6}\supset 3\mathfrak{su}(3)\supset 3\mathfrak{so}(3)$.

Note that $3\mathfrak{su}(3)$ is a regular subalgebra of $\mathfrak{e}_{6}$, hence we have
\begin{gather*}
  c_{1}=\frac{1}{4},\quad c_{2}=\frac{1}{24}, \\
  l_{\mathfrak{p}}=\frac{3\times 8}{78-3\times 8}(1-\frac{1}{4})=\frac{1}{3},\\
   k_{\mathfrak{p}}=\frac{3\times 3}{3\times 8}\times \frac{1}{3}=\frac{1}{8}.
\end{gather*}
Therefore
\begin{gather*}
  \omega_{1}=\frac{1}{4}+\frac{1}{2}l_{\mathfrak{p}}-k_{\mathfrak{p}}-\frac{c_{1}}{2}
  =\frac{1}{4}+\frac{1}{2}\times \frac{1}{3}-\frac{1}{8}-\frac{1}{2}\times \frac{1}{4}=\frac{1}{6}, \\
   \omega_{2}=2k_{\mathfrak{p}}+c_{1}-2c_{2}-4h_{\mathfrak{p}}
   =2\times \frac{1}{8}+\frac{1}{4}-2\times \frac{1}{24}=\frac{5}{12}.
\end{gather*}

\medskip
\noindent\textbf{Type B. 7}:
$
  \mathfrak{e}_{7}\supset \mathfrak{su}(8)\supset \mathfrak{so}(8)$.

Since $E_{7}/\mathrm{SU}(8)$ is symmetric, we have $l_{\mathfrak{p}}=\frac{1}{2}$.
By \eqref{omega1} and \eqref{omega2}, we get
\begin{eqnarray*}
% \nonumber to remove numbering (before each equation)
  \omega_{1} &=& \frac{1}{4\dim L}(\dim G-\dim L-2\dim K) \\
   &=& \frac{1}{4\times 63}(133-63-2\times 28) \\
   &=&  \frac{1}{18},
\end{eqnarray*}
and
\begin{eqnarray*}
% \nonumber to remove numbering (before each equation)
  \omega_{2} &=& \frac{1}{2\dim L}(\dim G-\dim L) \\
   &=& \frac{1}{2\times 63}(133-63) \\
   &=&  \frac{5}{9}.
\end{eqnarray*}

\medskip
\noindent\textbf{Type B. 13}:
$\mathfrak{e}_{8}\supset \mathfrak{su}(9)\supset \mathfrak{so}(9)$, and \textbf{Type B. 14}:
   $\mathfrak{e}_{8}\supset \mathfrak{su}(9)\supset 2\mathfrak{su}(3)$.

Clearly, $G/L=E_{8}/\mathrm{SU}(9)$, $\mathfrak{su}(9)$ is a regular subalgebra of $\mathfrak{e}_{8}$, hence we have $c_{1}=\frac{3}{10}$,
and $l_{\mathfrak{p}}=\frac{80}{248-80}\times (1-\frac{3}{10})=\frac{1}{3}$.
Since $\dim 2\mathfrak{su}(3)<\dim \mathfrak{so}(9)=36$, we have
\begin{eqnarray*}
% \nonumber to remove numbering (before each equation)
  \omega_{1} &=& -\frac{1}{4}+  \frac{\dim G-2\dim K}{2\dim L}l_{\mathfrak{p}} \\
   &\geq&  -\frac{1}{4}+ \frac{248-2\times 36}{2\times 80}\times \frac{1}{3}\\
   &=& \frac{7}{60},
\end{eqnarray*}
and
\begin{eqnarray*}
% \nonumber to remove numbering (before each equation)
  \omega_{2} &=& \frac{\dim G+\dim L-4\dim H}{\dim L}l_{\mathfrak{p}}-1 \\
   &=& \frac{248+80}{80}\times \frac{1}{3}-1 \\
   &=&  \frac{11}{30}.
\end{eqnarray*}

\medskip
\noindent \textbf{Type B. 15}:
  $\mathfrak{e}_{8}\supset 2\mathfrak{so}(8)\supset 8\mathfrak{su}(2)$, and \textbf{Type B. 16}:
  $\mathfrak{e}_{8}\supset 2\mathfrak{so}(8)\supset 2\mathfrak{su}(3)$.

Clearly, $G/L=E_{8}/\mathrm{SO}(8)\times \mathrm{SO}(8)$,
$2\mathfrak{so}(8)$ is a regular subalgebra of $\mathfrak{e}_{8}$,
hence we have  $c_{1}=\frac{1}{5}$,
and $l_{\mathfrak{p}}=\frac{56}{248-56}(1-\frac{1}{5})=\frac{7}{30}$.
Since $\dim 2\mathfrak{su}(3)<\dim 8\mathfrak{su}(2)=24$, we have
\begin{eqnarray*}
% \nonumber to remove numbering (before each equation)
  \omega_{1} &=& -\frac{1}{4}+  \frac{\dim G-2\dim K}{2\dim L}l_{\mathfrak{p}} \\
   &\geq&  -\frac{1}{4}+ \frac{248-2\times 24}{2\times 56}\times \frac{7}{30}\\
   &=& \frac{1}{6},
\end{eqnarray*}
and
\begin{eqnarray*}
% \nonumber to remove numbering (before each equation)
  \omega_{2} &=& \frac{\dim G+\dim L-4\dim H}{\dim L}l_{\mathfrak{p}}-1 \\
   &=& \frac{248+56}{56}\times \frac{7}{30}-1 \\
   &=&  \frac{4}{15}.
\end{eqnarray*}

\medskip
\textbf{Type B. 17}: $
  \mathfrak{e}_{8}\supset 2\mathfrak{su}(5)\supset 2\mathfrak{so}(5)$.

Note that $2\mathfrak{su}(5)$ is a regular subalgebra of $\mathfrak{e}_{8}$,
$\mathrm{SU}(5)/\mathrm{SO}(5)$ is symmetric, hence we have
\begin{gather*}
  c_{1}=\frac{1}{6}, \quad c_{2}=\frac{1}{20}, \\
  l_{\mathfrak{p}}=\frac{2\times 24}{248-2\times 24}=\frac{1}{5},\\
   k_{\mathfrak{p}}=\frac{20}{2\times 24}\times \frac{1}{5}=\frac{1}{12}.
\end{gather*}
Therefore
\begin{gather*}
  \omega_{1}=\frac{1}{4}+\frac{1}{2}l_{\mathfrak{p}}-k_{\mathfrak{p}}-\frac{c_{1}}{2}
  =\frac{1}{4}+\frac{1}{2}\times \frac{1}{5}-\frac{1}{12}-\frac{1}{2}\times \frac{1}{6}=\frac{11}{60}, \\
   \omega_{2}=2k_{\mathfrak{p}}+c_{1}-2c_{2}-4h_{\mathfrak{p}}
   =2\times \frac{1}{12}+\frac{1}{6}-2\times \frac{1}{20}=\frac{7}{30}.
\end{gather*}

\medskip
\noindent\textbf{Type B. 18}:
$  \mathfrak{e}_{8}\supset 4\mathfrak{su}(3)\supset 4\mathfrak{so}(3)$.

Note that $4\mathfrak{su}(3)$ is a regular subalgebra of $\mathfrak{e}_{8}$,
$\mathrm{SU}(3)/\mathrm{SO}(3)$ is symmetric, hence we have
\begin{gather*}
  c_{1}=\frac{1}{10}, \quad c_{2}=\frac{1}{60}, \\
  l_{\mathfrak{p}}=\frac{4\time 8}{248-4\times 8}(1-\frac{1}{10})=\frac{2}{15},\\
   k_{\mathfrak{p}}=\frac{4\times 3}{4\times 8}\times \frac{2}{15}=\frac{1}{20}.
\end{gather*}
Therefore
\begin{gather*}
  \omega_{1}=\frac{1}{4}+\frac{1}{2}l_{\mathfrak{p}}-k_{\mathfrak{p}}-\frac{c_{1}}{2}
  =\frac{1}{4}+\frac{1}{2}\times \frac{2}{15}-\frac{1}{20}-\frac{1}{2}\times \frac{1}{10}=\frac{13}{60}, \\
   \omega_{2}=2k_{\mathfrak{p}}+c_{1}-2c_{2}-4h_{\mathfrak{p}}
   =2\times \frac{1}{20}+\frac{1}{10}-2\times \frac{1}{60}=\frac{1}{6}.
\end{gather*}

\section{The values $\bar{f}(1)$ and $\bar{f}(\beta)$ in the proof of Theorem 5.2}
In this appendix, we list the values $\bar{f}(1)$ and $\bar{f}(\beta)$ in the proof of Theorem 5.2. First recall the formula  \eqref{app B}
\begin{eqnarray*}
 % \nonumber to remove numbering (before each equation)
   \bar{f}(x) &=&  M(x-\beta)\eta^{2}(x)
    +\frac{1-c_{1}}{8}(\frac{1}{2}+l_{\mathfrak{p}})x(\omega_{1}x+k_{\mathfrak{p}}-h_{\mathfrak{p}}),
 \end{eqnarray*}
 where $M=\frac{1}{4}c_{2}+\frac{1}{2}h_{\mathfrak{n}}>0$,
 $\omega_{1}=\frac{1}{4}+\frac{1}{2}l_{\mathfrak{p}}-k_{\mathfrak{p}}-\frac{c_{1}}{2}$.
$z$ is the  unique positive number  such that $\bar{f}(z)=0$.

 Now we  compute the values $\bar{f}(1)$ and $\bar{f}(\beta)$
 of the cases  (a)-(f) listed in Proposition \ref{prop4.5}. This will be completed case by case below.

\medskip
\noindent  \textbf{Case (a)}\quad Type A. 4 with $n_{1}=n_{2}=2$, namely,
 $
  \mathfrak{sp}(4n_{3}k)\supset 2\mathfrak{sp}(2n_{3}k) \supset 4\mathfrak{sp}(n_{3}k)\supset 4n_{3}\mathfrak{sp}(k), \quad k\geq1, n_{3}\geq 2.$

In this case, it is easily seen that
\begin{eqnarray*}
  c_{1}&=&\frac{2n_{3}k+1}{4n_{3}k+1},\quad c_{2}=\frac{n_{3}k+1}{4n_{3}k+1}, \\
  l_{\mathfrak{p}}&=&\frac{1}{2},\quad k_{\mathfrak{p}}=\frac{2n_{3}k+1}{2(4n_{3}k+1)},
  \quad h_{\mathfrak{p}}=\frac{2k+1}{2(4n_{3}k+1)},
  \end{eqnarray*}
  and
  $$\omega_{1}=-\frac{1}{2(4n_{3}k+1)},\quad \omega_{2}=\frac{2n_{3}k-4k-2}{4n_{3}k+1}.$$
Therefore we have
 \begin{equation*}
   \beta=\frac{2k_{\mathfrak{p}}-2h_{\mathfrak{p}}+1-c_{2}}{c_{2}+
   2h_{\mathfrak{p}}}=\frac{5n_{3}k-2k}{n_{3}k+2k+2},
 \end{equation*}
 and
 \begin{eqnarray*}
 % \nonumber to remove numbering (before each equation)
   \bar{f}(\beta) &=& \frac{1-c_{1}}{8}\beta (\omega_{1}\beta+k_{\mathfrak{p}}-h_{\mathfrak{p}}) \\
    &=& \frac{1-c_{1}}{8}\beta [-\frac{1}{2(4n_{3}k+1)}\times \frac{5n_{3}k-2k}{n_{3}k+2k+2}+\frac{2n_{3}k-2k}{2(4n_{3}k+1)}] \\
    &=& \frac{1-c_{1}}{8}\beta \times \frac{2k(n_{3}-1)(n_{3}k+2k+2)-k(5n_{3}-2)}{2(4n_{3}k+1)(n_{3}k+2k+2)}
    > 0.
 \end{eqnarray*}
It is clear that $\omega_{2}<0$ if and only if $n_{3}=2$. On the other hand, if $n_{3}=2$, then we have
 \begin{eqnarray*}
 % \nonumber to remove numbering (before each equation)
   \bar{f}(1) &=& M(1-\beta)\eta^{2}(1)+\frac{1-c_{1}}{8}(\omega_{1}+k_{\mathfrak{p}}-h_{\mathfrak{p}})\\
    &=& (\frac{1}{4}\times \frac{2k+1}{8k+1}+\frac{1}{2}\times \frac{2k+1}{2(8k+1)})(1-\frac{4k}{2k+1})
    \frac{1}{4}(1-\frac{2k+1}{2(8k+1)}-\frac{1}{2}\times\frac{4k+1}{8k+1})^{2}\\
    &&+\frac{1}{8}(1-\frac{4k+1}{8k+1})\big[-\frac{1}{2(8k+1)}+\frac{4k+1}{2(8k+1)}-\frac{2k+1}{2(8k+1)}\big] \\
    &=&  \frac{1}{8}\times \frac{2k+1}{8k+1}\times  \frac{1-2k}{2k+1} \times (\frac{5k}{8k+1})^{2}
    +\frac{1}{8}\times \frac{4k}{8k+1}\times \frac{2k-1}{2(8k+1)}\\
    &=& \frac{k(2k-1)(2-9k)}{8(8k+1)^{3}}<0.
 \end{eqnarray*}
 Thus $1<z<\beta$.

In the  case $n_{2}=n_{3}=2$, we have
\begin{equation*}
  \mathfrak{sp}(4n_{1}k)\supset n_{1}\mathfrak{sp}(4k) \supset 2n_{1}\mathfrak{sp}(2k)\supset 4n_{1}\mathfrak{sp}(k),
  \quad k\geq 1, n_{1}\geq 2.
\end{equation*}
It follows that
\begin{eqnarray*}
  c_{1}&=&\frac{4k+1}{4n_{1}k+1},\quad\qquad c_{2}=\frac{2k+1}{4n_{1}k+1}, \\
  l_{\mathfrak{p}}&=&\frac{8k+1}{2(4n_{1}k+1)},\quad k_{\mathfrak{p}}=\frac{4k+1}{2(4n_{1}k+1)},
  \quad h_{\mathfrak{p}}=\frac{2k+1}{2(4n_{1}k+1)},
  \end{eqnarray*}
  and
  $$\omega_{1}=\frac{2n_{1}k-4k-1}{2(4n_{1}k+1)},\quad \omega_{2}=-\frac{2}{4n_{1}k+1}.$$
Then we have
\begin{equation*}
   \beta=\frac{2k_{\mathfrak{p}}-2h_{\mathfrak{p}}+1-c_{2}}{c_{2}+2h_{\mathfrak{p}}}=\frac{2n_{1}k}{2k+1}.
 \end{equation*}
  Notice that the inequality $\omega_{1}<0$ holds only when $n_{1}=2$, and we have studied this case  in the above. Therefore in the following we assume that   $z<\beta$.
 Now
  \begin{eqnarray*}
 % \nonumber to remove numbering (before each equation)
   \bar{f}(1) &=& M(1-\beta)\eta^{2}(1)
   +\frac{1-c_{1}}{8}(\omega_{1}+k_{\mathfrak{p}}-h_{\mathfrak{p}})(\frac{1}{2}+l_{\mathfrak{p}})\\
    &=& (\frac{1}{4}\times \frac{2k+1}{4n_{1}k+1}+\frac{1}{2}\times \frac{2k+1}{2(4n_{1}k+1)})(1-\frac{2n_{1}k}{2k+1})\times\\
    &&\frac{1}{4}\times
    (\frac{1}{2}+\frac{8k+1}{2(4n_{1}k+1)}-\frac{2k+1}{2(4n_{1}k+1)}-\frac{1}{2}\times\frac{4k+1}{4n_{1}k+1})^{2}\\
    &&+\frac{1}{8}(1-\frac{4k+1}{4n_{1}k+1})
    (\frac{2n_{1}k-4k-1}{2(4n_{1}k+1)}+\frac{4k+1}{2(4n_{1}k+1)}-\frac{2k+1}{2(4n_{1}k+1)})
    (\frac{1}{2}+\frac{8k+1}{2(4n_{1}k+1)}) \\
    &=&\frac{1}{8}\times \frac{2k+1}{4n_{1}k+1}\times \frac{2k+1-2n_{1}k}{2k+1}\times
    (\frac{2n_{1}k+k}{4n_{1}k+1})^{2}\\
    &&+\frac{1}{8}\times \frac{4n_{1}k-4k}{4n_{1}k+1}\times \frac{2n_{1}k-2k-1}{2(4n_{1}k+1)}
    \times \frac{2n_{1}k+4k+1}{4n_{1}k+1}\\
    &=& \frac{k(2n_{1}k-2k-1)}{8(4n_{1}k+1)^{3}}[2(n_{1}-1)(2n_{1}k+4k+1)-k(2n_{1}+1)^{2}]\\
    &=& \frac{k(2n_{1}k-2k-1)}{8(4n_{1}k+1)^{3}}[2(n_{1}-1)-9k].
 \end{eqnarray*}
Thus $\bar{f}(1)=0$ if and only if
\begin{gather*}\label{}
 2(n_{1}-1)-9k=0.
\end{gather*}
Since $k, n_{1}\in \mathds{N}^{+}, n_{1}\geq 2$,  it is clear that $\bar{f}(1)=0$ if and only if
$n_{1}=9m+1$, $k=2m$, where $m\in \mathds{N}^{+}$.

\medskip
\noindent \textbf{Case (b)}\quad Type A. 5:\quad
$\mathfrak{e}_{6}\supset \mathfrak{so}(10)\oplus \mathds{R}
  \supset \mathfrak{so}(8)\oplus \mathds{R}^{2}\supset \mathds{R}^{6}.$

In this case, we have
\begin{eqnarray*}
  c_{1}&=&\frac{2}{3},\quad c_{2}=\frac{1}{2}, \\
  l_{\mathfrak{p}}&=&\frac{1}{2}, \quad k_{\mathfrak{p}}=\frac{1}{3}, \quad h_{\mathfrak{p}}=\frac{1}{12},\\
  \end{eqnarray*}
  and
  $$\omega_{1}=-\frac{1}{6}, \quad \omega_{2}=0.$$
 Then we have
 \begin{equation*}
   \beta=\frac{2k_{\mathfrak{p}}-2h_{\mathfrak{p}}+1-c_{2}}{c_{2}+2h_{\mathfrak{p}}}=\frac{3}{2},
 \end{equation*}
 and
 \begin{eqnarray*}
 % \nonumber to remove numbering (before each equation)
   \bar{f}(\beta) &=& \frac{1-c_{1}}{8}\beta (\omega_{1}\beta+k_{\mathfrak{p}}-h_{\mathfrak{p}}) \\
    &=& \frac{1}{3}\times \frac{1}{8}\times \frac{3}{2}(-\frac{1}{6}\times \frac{3}{2}+\frac{1}{3}-\frac{1}{12}) \\
    &=& 0.
 \end{eqnarray*}
 So $x=\beta=\frac{3}{2}$ is the only real solution of $\bar{f}(x)=0$.

\medskip
\noindent \textbf{Case (c)}\quad Type A. 6:\quad
$ \mathfrak{e}_{7}\supset \mathfrak{so}(12)\oplus \mathfrak{su}(2)
  \supset \mathfrak{so}(8)\oplus 3\mathfrak{su}(2)\supset 7\mathfrak{su}(2).$

In this case,
we have
\begin{eqnarray*}
  c_{1}&=&\frac{5}{9},\quad  c_{2}=\frac{1}{3},\quad B_{\mathfrak{su}(2)}=\frac{1}{9}B|_{\mathfrak{su}(2)},\\
   l_{\mathfrak{p}}&=&\frac{1}{2}, \quad k_{\mathfrak{p}}=\frac{5}{18}, \quad h_{\mathfrak{p}}=\frac{1}{6},\\
   \end{eqnarray*}
   and
$$  \omega_{1}=-\frac{1}{18}, \quad \omega_{2}=-\frac{2}{9}.$$
Then we have
 \begin{equation*}
   \beta=\frac{2k_{\mathfrak{p}}-2h_{\mathfrak{p}}+1-c_{2}}{c_{2}+2h_{\mathfrak{p}}}=\frac{4}{3},
 \end{equation*}
 and
  \begin{eqnarray*}
 % \nonumber to remove numbering (before each equation)
   \bar{f}(\beta) &=& \frac{1-c_{1}}{8}\beta (\omega_{1}\beta+k_{\mathfrak{p}}-h_{\mathfrak{p}}) \\
    &=& \frac{4}{9}\times \frac{1}{8}\times \frac{4}{3}(-\frac{1}{18}\times \frac{4}{3}+\frac{5}{18}-\frac{1}{6}) \\
    &=& \frac{2}{729}.
 \end{eqnarray*}
 Moreover,
 \begin{eqnarray*}
 % \nonumber to remove numbering (before each equation)
   \bar{f}(1) &=& M(1-\beta)\eta^{2}(1)+\frac{1-c_{1}}{8}(\omega_{1}+k_{\mathfrak{p}}-h_{\mathfrak{p}})\\
    &=& (\frac{1}{4}\times \frac{1}{3}+\frac{1}{2}\times \frac{1}{6})(1-\frac{4}{3})
    \frac{1}{4}(\frac{1}{2}+\frac{1}{2}-\frac{1}{6}-\frac{1}{2}\times\frac{5}{9})^{2}
    +\frac{1}{8}(1-\frac{5}{9})(-\frac{1}{18}+\frac{5}{18}-\frac{1}{6}) \\
    &=& -\frac{7}{5832}.
 \end{eqnarray*}
 Thus $1<z<\frac{4}{3}$.

\medskip
\noindent \textbf{Case (d)}\quad Type B. 3  with $n_{1}=n_{2}=2$:\quad
 $
   \mathfrak{sp}(4k)\supset 2\mathfrak{sp}(2k)\supset 4\mathfrak{sp}(k)\supset \{e\}, \quad k\geq 1.$

In this case, we have
\begin{eqnarray*}
  c_{1}&=&\frac{2k+1}{4k+1},\quad c_{2}=\frac{k+1}{4k+1}, \\
  l_{\mathfrak{p}}&=&\frac{1}{2},\quad k_{\mathfrak{p}}=\frac{2k+1}{2(4k+1)},
  \end{eqnarray*}
  and
  $$\omega_{1}=-\frac{1}{2(4k+1)},\quad \omega_{2}=\frac{2k}{4k+1}> 0.$$
 Then we have
 \begin{equation*}
   \beta=\frac{2k_{\mathfrak{p}}-2h_{\mathfrak{p}}+1-c_{2}}{c_{2}+2h_{\mathfrak{p}}}
   =\frac{2k+1+4k+1-k-1}{k+1}=\frac{5k+1}{k+1},
 \end{equation*}
 and
 \begin{eqnarray*}
 % \nonumber to remove numbering (before each equation)
   \bar{f}(\beta) &=& \frac{1-c_{1}}{8}\beta (\omega_{1}\beta+k_{\mathfrak{p}}-h_{\mathfrak{p}}) \\
    &=& \frac{2k}{4k+1}\times \frac{1}{8}\times \frac{5k+1}{k+1}
    (-\frac{1}{2(4k+1)}\times \frac{5k+1}{k+1}+\frac{2k+1}{2(4k+1)}) \\
    &=& \frac{k^{2}(5k+1)(k-1)}{4(4k+1)^{2}(k+1)^{2}}\geq 0.
 \end{eqnarray*}
 Thus $\bar{f}(\beta)=0$ if and only if $k=1$.

\medskip
 \noindent\textbf{Case (e)}\quad  Type B. 4:\quad
$
  \mathfrak{so}(8)\supset \mathfrak{so}(7)\supset \mathfrak{g}_{2}\supset \{e\}.$

In this case, we
 have
\begin{eqnarray*}
  c_{1}&=&\frac{5}{6},\quad  c_{2}=\frac{2}{3},\\
   l_{\mathfrak{p}}&=&\frac{1}{2}, \quad k_{\mathfrak{p}}=\frac{1}{3},
   \end{eqnarray*}
   and
  $$\omega_{1}=-\frac{1}{4}, \quad \omega_{2}=\frac{1}{6}.$$
Then we have
\begin{equation*}
   \beta=\frac{2k_{\mathfrak{p}}-2h_{\mathfrak{p}}+1-c_{2}}{c_{2}+2h_{\mathfrak{p}}}=\frac{3}{2},
 \end{equation*}
 and
  \begin{eqnarray*}
 % \nonumber to remove numbering (before each equation)
   \bar{f}(\beta) &=& \frac{1-c_{1}}{8}\beta (\omega_{1}\beta+k_{\mathfrak{p}}-h_{\mathfrak{p}}) \\
    &=& \frac{1}{6}\times \frac{1}{8}\times \frac{3}{2}(-\frac{1}{4}\times \frac{3}{2}+\frac{1}{3}) \\
    &=& -\frac{1}{768}.
 \end{eqnarray*}
So $z>\frac{3}{2}$.

\medskip
\noindent\textbf{Case (f)}\quad Type B. 5:\quad
$
  \mathfrak{f}_{4}\supset \mathfrak{so}(9)\supset \mathfrak{so}(8)\supset \{e\}.$

In this case, we have
\begin{eqnarray*}
  c_{1}&=&\frac{7}{9},\quad  c_{2}=\frac{2}{3},\\
   l_{\mathfrak{p}}&=&\frac{1}{2}, \quad k_{\mathfrak{p}}=\frac{7}{18},
   \end{eqnarray*}
   and
 $$ \omega_{1}=-\frac{5}{18}, \quad \omega_{2}=\frac{2}{9}.$$
Then we have
\begin{equation*}
   \beta=\frac{2k_{\mathfrak{p}}-2h_{\mathfrak{p}}+1-c_{2}}{c_{2}+2h_{\mathfrak{p}}}=\frac{5}{3},
 \end{equation*}
 and
\begin{eqnarray*}
 % \nonumber to remove numbering (before each equation)
   \bar{f}(\beta) &=& \frac{1-c_{1}}{8}\beta (\omega_{1}\beta+k_{\mathfrak{p}}-h_{\mathfrak{p}}) \\
    &=& \frac{2}{9}\times \frac{1}{8}\times \frac{5}{3}(-\frac{5}{18}\times \frac{5}{3}+\frac{7}{18}) \\
    &=& -\frac{5}{1458}.
\end{eqnarray*}
So $z>\beta=\frac{5}{3}$.

\end{document}